\newtheorem{theorem}{Theorem}
\newtheorem{proposition}[theorem]{Proposition}
\newtheorem{corollary}[theorem]{Corollary}
\newtheorem{lemma}[theorem]{Lemma}
\theoremstyle{definition}
\newtheorem{definition}[theorem]{Definition}
\newcommand{\norm}[1]{\left\lVert#1\right\rVert}
\DeclarePairedDelimiterX{\inp}[2]{\langle}{\rangle}{#1, #2}
\newcommand{\argmin}{\operatornamewithlimits{argmin}}
\newcommand{\BV}{\text{BV}} 
\renewcommand{\P}{\mathcal{P}} 
\newcommand{\X}{\mathcal{X}} 
\newcommand{\E}{\mathop{\mathbb{E}}} 
\renewcommand{\L}{L} 
\newcommand{\F}{\mathcal{F}} 
\newcommand{\R}{\mathbb{R}} 
\newcommand{\Z}{\mathbb{Z}} 
\newcommand{\N}{\mathbb{N}} 
\newcommand{\W}{\mathcal{W}} 
\newcommand{\IID}{\stackrel{IID}{\sim}} 
\renewcommand{\hat}{\widehat} 
\renewcommand{\tilde}{\widetilde} 
\DeclareMathOperator{\ess}{ess}
\title{Nonparametric Density Estimation and\\ Convergence of GANs under Besov IPM Losses}
\author{%
  Ananya Uppal\\
  Department of Mathematical Sciences\\
  Carnegie Mellon University\\
  \texttt{auppal@andrew.cmu.edu} \\
   \And
   Shashank Singh\thanks{Now at Google.} \quad Barnab\'as P\'oczos \\
   Machine Learning Department\\
   Carnegie Mellon University\\
   \texttt{\{sss1,bapoczos\}@cs.cmu.edu} \\
}
\begin{document}

\maketitle

\begin{abstract}
We study the problem of estimating a nonparametric probability density under a large family of losses called Besov IPMs, which include, for example, $\L^p$ distances, total variation distance, and generalizations of both Wasserstein and Kolmogorov-Smirnov distances. For a wide variety of settings, we provide both lower and upper bounds, identifying precisely how the choice of loss function and assumptions on the data interact to determine the minimax optimal convergence rate. We also show that linear distribution estimates, such as the empirical distribution or kernel density estimator, often fail to converge at the optimal rate. Our bounds generalize, unify, or improve several recent and classical results. Moreover, IPMs can be used to formalize a statistical model of generative adversarial networks (GANs). Thus, we show how our results imply bounds on the statistical error of a GAN, showing, for example, that GANs can strictly outperform the best linear estimator.
\end{abstract}
\vspace{-2mm}
\section{Introduction}
\vspace{-2mm}
This paper studies the problem of estimating a nonparametric probability density, using an integral probability metric as a loss. That is, given a sample space $\X \subseteq \R^D$, suppose we observe $n$ IID samples $X_1,...,X_n \IID p$ from a probability density $p$ over $\X$ that is unknown but assumed to lie in a regularity class $\P$. We seek an estimator $\hat p : \X^n \to \P$ of $p$, with the goal of minimizing a loss
\vspace{-2mm}
\begin{equation}
    d_\F(p, \hat p(X_1,...,X_n)) := \sup_{f \in \F} \left| \E_{X \sim p} \left[ f(X) \right] - \E_{X \sim \hat p(X_1,...,X_n)} \left[ f(X) \right] \right|,
    \tag{$*$} \label{def:IPM}
\end{equation}
where $\F$, called the \textit{discriminator class}, is some class of bounded, measurable functions on $\X$.

Metrics of the form~\eqref{def:IPM} are called \textit{integral probability metrics} (IPMs), or $\F$-IPMs\footnote{While the name IPM seems most widely used~\citep{muller1997integral,sriperumbudur2010non,bottou2018geometrical,zellinger2019robust}, many other names have been used for these quantities, including \textit{adversarial loss}~\citep{singh2018adversarial,dong2019towards}, \textit{MMD}~\citep{dziugaite2015training}, and \textit{$\F$-distance} or \textit{neural net distance} \citep{arora2017generalization}.}, and can capture a wide variety of metrics on probability distributions by choosing $\F$ appropriately~\citep{muller1997integral}. This paper studies the case where both $\F$ and $\P$ belong to the family of Besov spaces, a large family of nonparametric smoothness spaces that include, as examples, $\L^p$, Lipschitz/H\"older, and Hilbert-Sobolev spaces.
The resulting IPMs include, as examples, $\L^p$, total variation, Kolmogorov-Smirnov, and Wasserstein distances.
We have two main motivations for studying this problem:
\begin{enumerate}[wide]
    \item This problem unifies nonparametric density estimation with the central problem of empirical process theory, namely bounding quantities of the form $d_\F(P, \hat P)$ when $\hat P$ is the empirical distribution $P_n = \frac{1}{n} \sum_{i = 1}^n \delta_{X_i}$ of the data~\citep{pollard1990empirical}. Whereas empirical process theory typically avoids restricting $\P$ and fixes the estimator $\hat P = P_n$, focusing on the discriminator class $\F$, nonparametric density estimation typically fixes the loss to be an $\L^p$ distance, and seeks a good estimator $\hat P$ for a given distribution class $\P$. In contrast, we study how constraints on $\F$ and $\P$ \emph{jointly} determine convergence rates of a number of estimates $\hat P$ of $P$. In particular, since Besov spaces comprise perhaps the largest commonly-studied family of nonparametric function spaces, this perspective allows us to unify, generalize, and extend several classical and recent results in distribution estimation (see Section~\ref{sec:related_work}).
    \item This problem is a theoretical framework for analyzing generative adversarial networks (GANs). Specifically, given a GAN whose discriminator and generator networks encode functions in $\F$ and $\P$, respectively, recent work~\citep{liu2017approximationInGANs,liang2017well,liang2018well,singh2018adversarial} showed that a GAN can be seen as a distribution estimate\footnote{We assume a good optimization algorithm for computing \eqref{eq:theoretical_GAN}, although this is also an active area of research.}
    \vspace{-2mm}
    \begin{equation}
    \hat P = \argmin_{Q \in \P} \sup_{f \in \F} \left| \E_{X \sim Q} \left[ f(X) \right] - \E_{X \sim \tilde P_n} \left[ f(X) \right] \right| = \argmin_{Q \in \P} d_\F \left( Q, \tilde P_n \right),
    \label{eq:theoretical_GAN}
    \end{equation}
    i.e., an estimate which directly minimizes empirical IPM risk with respect to a (regularized) empirical distribution $\tilde P_n$.
    While, in the original GAN model~\citep{goodfellow2014GANs}, $\tilde P_n$ was the empirical distribution $P_n = \frac{1}{n} \sum_{i = 1}^n \delta_{X_i}$ of the data, \citet{liang2017well} showed that, under smoothness assumptions on the population distribution, performance is improved by replacing $P_n$ with a regularized version $\tilde P_n$, equivalent to the instance noise trick that has become standard in GAN training~\citep{sonderby2016amortised,mescheder2018training}.
    We show, in particular, that, when $\tilde P_n$ is a wavelet-thresholding estimate, a GAN based on sufficiently large fully-connected neural networks with ReLU activations learns Besov probability distributions at the optimal rate.
\end{enumerate}
\vspace{-2mm}

\section{Set up and Notation}
\vspace{-2mm}
\label{sec:setup}
    For non-negative real sequences $\{a_n\}_{n \in \N}$, $\{b_n\}_{n \in \N}$, $a_n \lesssim b_n$ indicates $\limsup_{n \to \infty} \frac{a_n}{b_n} < \infty$, and $a_n~\asymp~b_n$ indicates $a_n \lesssim b_n \lesssim a_n$.
    For $p \in [1,\infty]$, $p' := \frac{p}{p - 1}$ denotes the H\"older conjugate of $p$ (with $1' = \infty$, $\infty' = 1$). $\L^p(\R^D)$ (resp. $l^p$) denotes the set of functions $f$ (resp. sequences $a$) with $\norm{f}_p := \left(\int |f(x)|^p \, dx \right)^{1/p}<\infty$ (resp. $\norm{a}_{l^p} := \left(\sum_{n\in \N}|a_n|^p\right)^{1/p}<\infty$).
        \vspace{-2mm}
    \subsection{Multiresolution Approximation and Besov Spaces}
    \vspace{-2mm}
    We now provide some notation that is necessary to define the family of Besov spaces studied in this paper. Since the statements and formal justifications behind these definitions are a bit complex, some technical details are relegated to the Appendix, and several well-known examples from the rich class of resulting spaces are given in Section~\ref{sec:related_work}.
    The diversity of Besov spaces arises from the fact that, unlike the H\"older or Sobolev spaces that they generalize, Besov spaces model functions simultaneously across multiple spatial scales. In particular, they rely on the following notion:
    \begin{definition}
        A \emph{multiresolution approximation (MRA)} of $\L^2(\R^D)$ is an increasing sequence $\{V_j\}_{j\in \Z}$ of closed linear subspaces of $L^2(\R^D)$ with the following properties:
        \begin{enumerate}[wide,noitemsep]
            \item 
                $\bigcap_{j=-\infty}^\infty V_j = \{0\}$, and the closure of $\bigcup_{j=-\infty}^\infty V_j = \L^2(\R^D)$.
            \item 
                For $f\in \L^2(\R^D), k\in \Z^D, j \in \Z$,
                    $f(x) \in V_0 \Leftrightarrow f(x-k)\in V_0$ \& $f(x) \in V_j \Leftrightarrow f(2x)\in V_{j+1}$.
            \item 
                For some ``father wavelet'' $\phi\in V_0$, $\{\phi(x-k) : k\in \Z^D\}$ is an orthonormal basis of $V_0 \subset \L^2(\R^D)$.
        \end{enumerate}
    \end{definition}
    
    For intuition, consider the best-known MRA of $\L^2(\R)$, namely the Haar wavelet basis. Let $\phi(x) = 1_{\{[0,1)\}}$ be the Haar father wavelet, let $V_0 = \text{Span}\{ \phi(x - k) : k \in \Z\}$ be the span of translations of $\phi$ by an integer, and let $V_j$ defined recursively for all $j\in \Z$ by $V_j = \{f(2x) : f(x) \in V_{j-1}\}$ be the set of horizontal scalings of functions in $V_{j-1}$ by $\sfrac{1}{2}$. Then, $\{V_j\}_{j \in \Z}$ is an MRA of $\L^2(\R)$.
    
    The importance of an MRA is that it generates an orthonormal basis of $\L^2(\R^D)$, via the following:
    \begin{lemma}[\citep{meyer1992wavelets}, Section 3.9]
    Let $\{V_j\}_{j \in \Z}$ be an MRA of $\L^2(\R^D)$ with father wavelet $\phi$.
    Then, for $E = \{0,1\}^D\setminus (0,\dots,0)$, there exist ``mother wavelets'' $\{\psi_\epsilon\}_{\epsilon \in E}$ such that
    $\{2^{D j/2}\psi_\epsilon(2^jx-k) : \epsilon\in E, k\in \Z^D\} \cup \{2^{D j/2}\phi(2^jx-k) : k\in \Z^D\}$ is an orthonormal basis of $V_j \subseteq \L^2(\R^D)$.
    \end{lemma}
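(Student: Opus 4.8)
The plan is to reduce to a single scale and then build the mother wavelets from the refinement filter of $\phi$, which is the Fourier-domain construction carried out in \citep{meyer1992wavelets}, Section~3.9. By axiom~2 of the MRA definition, the dilation $D_j:f\mapsto 2^{Dj/2}f(2^j\cdot)$ is a unitary operator carrying $V_0$ onto $V_j$ and $V_1$ onto $V_{j+1}$, so applying $D_j$ to an orthonormal basis of $V_1$ of the asserted shape produces one of $V_{j+1}$ (the assertion, the level-$j$ translates of $\phi$ and the $\psi_\epsilon$ together spanning the finer space). Hence it suffices to find $\psi_\epsilon\in V_1$, $\epsilon\in E$, such that $\{\psi_\epsilon(x-k)\}_{\epsilon\in E,\,k\in\Z^D}\cup\{\phi(x-k)\}_{k\in\Z^D}$ is an orthonormal basis of $V_1$.

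\textbf{Fourier reformulation.} Since $\{\phi(\cdot-k)\}_k$ is an orthonormal basis of $V_0$, axiom~2 makes $\{2^{D/2}\phi(2\cdot-k)\}_k$ one of $V_1$; as $\phi\in V_0\subseteq V_1$, there is a $2\pi\Z^D$-periodic $m_0\in L^2([0,2\pi]^D)$ with $\hat\phi(\xi)=m_0(\xi/2)\hat\phi(\xi/2)$, and, more generally, $g\in V_1$ iff $\hat g(\xi)=m(\xi/2)\hat\phi(\xi/2)$ for some $2\pi\Z^D$-periodic $m$. Plancherel and Poisson summation turn orthonormality of $\{\phi(\cdot-k)\}_k$ into $\sum_{l\in\Z^D}|\hat\phi(\xi+2\pi l)|^2=1$ a.e.; substituting the refinement relation and splitting $\Z^D$ into its cosets modulo $2\Z^D$ yields
\[
\sum_{\nu\in\{0,1\}^D}\bigl|m_0(\zeta+\pi\nu)\bigr|^2=1\qquad\text{for a.e.\ }\zeta .
\]
Setting $\hat\psi_\epsilon(\xi):=m_\epsilon(\xi/2)\hat\phi(\xi/2)$ for to-be-chosen $2\pi\Z^D$-periodic filters $m_\epsilon$ ($\epsilon\in E$), the same Plancherel/Poisson computation shows that all the orthonormality relations required of $\{\phi(\cdot-k)\}\cup\{\psi_\epsilon(\cdot-k)\}$ are equivalent to the single requirement that the $2^D\times 2^D$ matrix $U(\zeta):=\bigl(m_\epsilon(\zeta+\pi\nu)\bigr)_{\epsilon\in\{0,1\}^D,\,\nu\in\{0,1\}^D}$ (with $m_0$ fixed and the rows indexed by $E$ free) be unitary for a.e.\ $\zeta$; moreover unitarity forces completeness in $V_1$, since then the $2^D$ rows of $U(\zeta)$ span the fiber $\mathbb{C}^{\{0,1\}^D}$, matching the index $2^D$ of $V_0$ in $V_1$.

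\textbf{The crux: completing the filter bank.} It remains to extend the a.e.-unit row vector $\zeta\mapsto\bigl(m_0(\zeta+\pi\nu)\bigr)_\nu$ to a full unitary $U(\zeta)$ whose other rows again have the coset-shift form $\bigl(m_\epsilon(\zeta+\pi\nu)\bigr)_\nu$ for genuine $2\pi\Z^D$-periodic functions $m_\epsilon$. The shift structure makes the data on $[0,\pi)^D$ free: for $\zeta\in[0,\pi)^D$ the points $\zeta+\pi\nu$ exhaust $[0,2\pi)^D$, so one need only choose, measurably in $\zeta$, an orthonormal basis of $\mathbb{C}^{\{0,1\}^D}$ completing the given unit vector and read off the extra rows as the values of the $m_\epsilon$ on $[0,2\pi)^D$ (then extend periodically); by periodicity $U(\zeta+\pi\nu')$ is automatically $U(\zeta)$ with columns permuted by $\nu\mapsto\nu\oplus\nu'$, hence still unitary. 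The measurable completion itself is a standard selection argument (e.g.\ a measurable Gram--Schmidt of the unit vector against the standard basis, on a measurable partition of the cube recording which standard vectors stay independent); when $\phi$ is smooth with enough decay---the regime relevant to the Besov applications, where $m_0$ is continuous---one wants $U$ continuous, and this is where the topology of the unitary group enters and requires care. I expect this step, a measurable (or continuous) unitary completion respecting the shift structure, to be the only real obstacle; the fibers where $\hat\phi$ vanishes are a minor technical point handled by the same partitioning.

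\textbf{Conclusion.} Given such a $U$, put $\hat\psi_\epsilon(\xi)=m_\epsilon(\xi/2)\hat\phi(\xi/2)$ for $\epsilon\in E$. Each $\psi_\epsilon$ lies in $V_1$ by the Fourier characterization of $V_1$; the translates $\{\phi(\cdot-k)\}_k\cup\{\psi_\epsilon(\cdot-k)\}_{\epsilon,k}$ are orthonormal because $UU^*=I$; and they are complete in $V_1$ because the rows of $U$ span every fiber. Applying the unitary $D_j$ then transports this basis to $\{2^{Dj/2}\psi_\epsilon(2^jx-k)\}_{\epsilon\in E,k}\cup\{2^{Dj/2}\phi(2^jx-k)\}_k$, an orthonormal basis of $V_{j+1}$, which is the assertion.
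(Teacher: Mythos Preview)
The paper does not give its own proof of this lemma; it is quoted as a classical result from \citep{meyer1992wavelets}, Section~3.9, and used as a black box. Your sketch is essentially the construction carried out in Meyer: reduce to a single scale by dilation, pass to the Fourier side and the refinement filter $m_0$, rewrite the orthonormality/completeness conditions as the requirement that the $2^D\times 2^D$ matrix $U(\zeta)=(m_\epsilon(\zeta+\pi\nu))$ be unitary, and then measurably complete the first row to a full unitary. So there is no divergence in approach to report.

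Two minor remarks. First, you correctly noticed (and silently repaired) an index slip in the statement: the $\phi$-translates at level $j$ already span $V_j$ by themselves, so the displayed family is an orthonormal basis of $V_{j+1}$, not $V_j$. Your proof is written for the correct target space. Second, your identification of the ``crux'' is accurate: the measurable unitary completion is routine, but the continuous/smooth completion needed for the $r$-regular case (Definition in the appendix) is where Meyer does real work, and your sketch defers rather than resolves that step. Since the paper only cites the lemma and never proves it, this is not a gap relative to the paper, but if you were writing a self-contained proof you would need to fill it in.
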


    

    Let $\Lambda_j = \{ 2^{-j}k + 2^{-j-1}\epsilon: k\in \Z^D, \epsilon\in E\} \subseteq \R^D$.
    Then $k, \epsilon$ are uniquely determined for any $\lambda\in \Lambda_j$. Thus, for all $\lambda\in \Lambda := \bigcup_{j\in\Z}\Lambda_j$, we can let $\psi_\lambda(x) = 2^{D j/2}\psi_{\epsilon}(2^j x-k)$. Equipped with the orthonormal basis $\{\psi_\lambda : \lambda \in \Lambda\}$ of $\L^2(\R^D)$, we are almost ready to define Besov spaces.
    
    For technical reasons (see, e.g., \citep[Section 3.9]{meyer1992wavelets}), we need MRAs of smoother functions than Haar wavelets, which are called \emph{$r$-regular}. Due to space constraints, $r$-regularity is defined precisely in Appendix A; we note here that standard $r$-regular MRAs exist, such as the Daubechies wavelet~\citep{daubechies1992ten}. We assume for the rest of the paper that the wavelets defined above are supported on $[-A,A]$.
    
    \begin{definition}[Besov Space] Let $0 \leq \sigma < r$, and let $p,q \in [1,\infty]$. Given an $r$-regular MRA of $L^2(\R^D)$ with father and mother wavelets $\phi, \psi$ respectively, the \emph{Besov space} $B^{\sigma}_{p,q}(\R^D)$ is defined as the set of functions $f : \R^D \to \R$ such that, the wavelet coefficients
        \vspace{-1mm}
        \[
            \alpha_{k} := \int_{\R^D} f(x)\phi(x-k)dx \;
                \text{ for } \; k \in \Z^D
			    \quad \text{ and } \quad
	        \beta_{\lambda} := \int_{\R^D} f(x) \psi_{\lambda}(x) dx \;
	            \text{ for } \; \lambda \in \Lambda,
		\]
		\vspace{-3mm}
		\[\text{satisfy} \quad\quad
		    \norm{f}_{B_{p,q}^\sigma} := \norm{\{\alpha_k\}_{k\in \Z^D}}_{l^p} +
		    \norm{
		        \left\{
		            2^{j(\sigma+D(1/2-1/p))}
		            \norm{\{\beta_\lambda\}_{\lambda\in \Lambda_j}}_{l^p}
		      \right\}_{j \in \N}}_{l^q}
		    <\infty
		\]
		   The quantity $\|f\|_{B^\sigma_{p,q}}$ is called the \emph{Besov norm of $f$}, and, for any $L > 0$, we write $B^\sigma_{p,q}(L)$ to denote the closed Besov ball $B^\sigma_{p,q}(L) = \{f \in B^\sigma_{p,q} : \|f\|_{B^\sigma_{p,q}} \leq L\}$.
		   When the constant $L$ is unimportant (e.g., for \emph{rates} of convergence), $B^\sigma_{p,q}$ denotes a ball $B_{p,q}^{\sigma}(L)$ of finite but arbitrary radius $L$.
    \end{definition}
    
    
    
    \vspace{-2mm}
    \subsection{Formal Problem Statement}
    \vspace{-2mm}
    Having defined Besov spaces, we now formally state the statistical problem we study in this paper.
    Fix an $r$-regular MRA.
    We observe $n$ IID samples $ X_1,...,X_n \IID p$ from an unknown probability density $p$ lying in a Besov ball $B_{p_g,q_g}^{\sigma_g}(L_g)$ with $\sigma_g<r$.
    We want to estimate $p$, measuring error with an IPM $d_{B_{p_d,q_d}^{\sigma_d}(L_d)}$. Specifically, for general $\sigma_d,\sigma_g,p_d,p_g,q_d,q_g$,
    we seek to bound minimax risk
        \begin{equation}
            M \left( B_{p_d,q_d}^{\sigma_d}, B_{p_g,q_g}^{\sigma_g} \right)
                := \inf_{\hat p} \sup_{p \in B_{p_g,q_g}^{\sigma_g}} \E_{X_{1:n}} \left[ d_{B_{p_d,q_d}^{\sigma_d}} \left( p, \hat p(X_1,\dots,X_n) \right) \right]
                \label{eq:minimax_risk}
        \end{equation}
    of estimating densities in $\F_g = B_{p_g,q_g}^{\sigma_g}$, where the infimum is taken over all estimators $\hat p(X_1, \dots, X_n)$. In the rest of this paper, we suppress dependence of $\hat p(X_1,...,X_n)$ on $X_1,...,X_n$, writing simply $\hat{p}$.
\vspace{-2mm}
\section{Related Work}
\label{sec:related_work}
\vspace{-2mm}
The current paper unifies, extends, or improves upon a number of recent and classical results in the nonparametric density estimation literature. Two areas of prior work are most relevant:
\paragraph{Nonparametric estimation over inhomogeneous smoothness spaces}
    First is the classical study of estimation over inhomogeneous smoothness spaces under $\L^p$ losses. \citet{nemirovskii1985nonparametric} first noticed that, over classes of regression functions with inhomogeneous (i.e., spatially-varying) smoothness, many widely-used regression estimators, called ``linear'' estimators (defined precisely in Section~\ref{sec:linear_estimators}), are provably unable to converge at the minimax optimal rate, in $\L^2$ loss. \citet{donoho1996density} identified a similar phenomenon for estimating probability densities in a Besov space $B_{p_g,q_g}^{\sigma_g}$ on $\R$ under $\L^{p_d'}$ losses with $p_d' > p_g$, corresponding to the case $\sigma_d = 0, D = 1$ in our work. \citep{donoho1996density} also showed that the wavelet-thresholding estimator we consider in Section~\ref{sec:wavelet_upper_bounds} \emph{does} converge at the minimax optimal rate.
    We generalize these phenomena to many new loss functions; in many cases, linear estimators continue to be sub-optimal, whereas the wavelet-thresholding estimator continues to be optimal. We also show that sub-optimality of linear estimators is more pronounced in higher dimensions.
    
    \paragraph{Distribution estimation under IPMs}
    The second, more recent body of results~\citep{liang2017well,singh2018adversarial,liang2018well} concerns nonparametric distribution estimation under IPM losses. Prior work focused on the case where $\F$ and $\P$ are both Sobolev ellipsoids, corresponding to the case $p_d = q_d = p_g = q_g = 2$ in our work. Notably, over these smaller spaces (of homogeneous smoothness), the linear estimators mentioned above are minimax rate-optimal.
    Perhaps the most important finding of these works is that the curse of dimensionality pervading classical nonparametric statistics is significantly diminished under weaker loss functions than $\L^p$ losses (namely, many IPMs). For example, \citet{singh2018adversarial} showed that, when $\sigma_d > D/2$, one can estimate $P$ at the parametric rate $n^{-1/2}$ in the loss $d_{B_{2,2}^{\sigma_d}}$, without \emph{any} regularity assumptions whatsoever on the probability distribution $P$. We generalize this to other losses $d_{B_{p_d,q_d}^{\sigma_d}}$.
    
    These papers were motivated in part by a desire to understand theoretical properties of GANs, and, in particular, \citet{liang2017well} and \citet{singh2018adversarial} helped establish \eqref{eq:theoretical_GAN} as a valid statistical model of GANs. In particular, we note that \citet{singh2018adversarial} showed that the implicit generative modeling problem (``sampling'') in terms of which GANs are usually framed, is equivalent, in terms of minimax convergence rates, to nonparametric density estmation, justifying our focus on the latter problem in this paper.
    We show, in Section~\ref{sec:GAN_upper_bounds}, that, given a sufficiently good optimization algorithm, GANs based on appropriately constructed deep neural networks can learn Besov densities at the minimax optimal rate.
    In this context, our results are among the first to suggest theoretically that GANs can outperform classical density estimators (namely, linear estimators mentioned above).

\citet{liu2017approximationInGANs} provided general sufficient conditions for weak consistency of GANs in a generalization of the model~\eqref{eq:theoretical_GAN}. Since many IPMs, such as Wasserstein distances, metrize weak convergence of probability measures under mild additional assumptions \citet{villani2008optimal}, this implies consistency under these IPMs. However, \citet{liu2017approximationInGANs} did not study \emph{rates} of convergence.


We end this section with a brief survey of known results for estimating distributions under specific Besov IPM losses, noting that our results (Equations~\eqref{eq:general_minimax_rate} and \eqref{eq:linear_minimax_rate} below) generalize all these rates:
\begin{enumerate}[wide,itemsep=0pt]
    \item 
        \textbf{$\L^p$ Distances:} If $\F_d = \L^{p'} = B_{p',p'}^0$, then, for distributions $P, Q$ with densities $p,q \in \L^p$, $d_{\F_d}(P, Q) = \|p - q\|_{\L^p}$. These are the most well-studied losses in nonparametric statistics, especially for $p \in \{1,2,\infty\}$~\citep{nemirovski2000topics,wassermann2006all,tsybakov2009introduction}. \cite{donoho1996density} studied the minimax rate of convergence of density estimation over Besov spaces under $L^p$ losses, obtaining minimax rates
        $n^{-\frac{\sigma_g}{2\sigma_g + D}} + n^{-\frac{\sigma_g + D \left( 1 - 1/p_g - 1/p_d \right)}{2\sigma_g + D \left( 1 - 2/p_g \right)}}$
        over general estimators, and
        $n^{-\frac{\sigma_g}{2\sigma_g + D}} + n^{-\frac{\sigma_g - D/p_g+D/p_d'}{2\sigma_g + D - 2D/p_g+2D/p_d'}}$
        when restricted to linear estimators.
    \item 
        \textbf{Wasserstein Distance:} If $\F_d = C^1(1) \asymp B_{\infty,\infty}^1$ is the space of $1$-Lipschitz functions, then $d_{\F_d}$ is the $1$-Wasserstein or Earth mover's distance (via the Kantorovich dual formulation~\citep{kantorovich1958space,villani2008optimal}). A long line of work has established convergence rates of the empirical distribution to the true distribution in spaces as general as unbounded metric spaces~\citep{weed2017sharp,lei2018convergence,singh2018minimax}). In the Euclidean setting, this is well understood~\citep{dudley1969speed,ajtai1984optimalMatchings,fournier2015rate}, although, to the best of our knowledge, minimax lower bounds have been proven only recently~\citep{singh2018minimax}; this setting intersects with our work in the case $\sigma_d = 1, \sigma_g = 0$, $p_d = \infty$, matching our minimax rate of $n^{-1/D} + n^{-1/2}$.
        More general $p$-Wasserstein distances $W_p$ ($p \geq 1$) cannot be expressed exactly as IPMs, but, our results complement recent results of \citet{weed2019estimation}, who showed that, for densities $p$ and $q$ that are bounded above and below (i.e., $0 < m \leq p,q \leq M < \infty$), the bounds
        $M^{-1/p'} d_{B_{p',\infty}^1}(p, q) \leq W_p(p, q) \leq m^{-1/p'} d_{B_{p',1}^1}(p, q)$
        hold; for such densities, our rates match theirs ($n^{-\frac{1 + \sigma_g}{2\sigma_g + D}} + n^{-1/2}$) up to polylogarithmic factors. \citet{weed2019estimation} showed that, without the lower-boundedness assumption ($m > 0$), minimax rates under $W_p$ are strictly slower (by a polynomial factor in $n$).
        
        In machine learning applications, \citet{arora2017generalization} recently used this rate to argue that, for data from a continuous distribution, Wasserstein GANs~\citep{arjovsky2017wasserstein} cannot generalize at a rate faster than $n^{-1/D}$ (at least without additional regularization, as we use in Theorem~\ref{thm:GAN_upper_bound}).
        A variant in which $\F_d \subset C^1 \cap \L^\infty$ is both uniformly bounded and $1$-Lipschitz gives rise to the Dudley metric~\citep{dudley1972speeds}, which has also been suggested for use in GANs~\citep{abbasnejad2018deep}.
        Finally, we note that the more general distances induced by $\F_d = B_{\infty,\infty}^{\sigma_d}$ have been useful for deriving central limit theorems~\citep[Section 4.8]{chen2010normal}.
    \item 
        \textbf{Kolmogorov-Smirnov Distance:} If $\F_d = \BV \asymp B_{1,\cdot}^1$ is the set of functions of bounded variation, then, in the $1$-dimensional case, $d_{\F_d}$ is the well-known Kolmogorov-Smirnov metric~\citep{daniel1978applied}, and so the famous Dvoretzky–Kiefer–Wolfowitz inequality~\citep{massart1990DKW} gives a parametric convergence rate of $n^{-1/2}$.
    \item 
        \textbf{Sobolev Distances:} If $\F_d = \W^{\sigma_d,2} = B_{2,2}^\sigma$ is a Hilbert-Sobolev space, for $\sigma \in \R$, then $d_{\F_d} = \|\cdot - \cdot\|_{\W^{-\sigma_d,2}}$ is the corresponding negative Sobolev pseudometric~\citep{yosida1995functional}. Recent work~\citep{liang2017well,singh2018adversarial,liang2018well} established a minimax rate of $n^{-\frac{\sigma_g+\sigma_d}{2\sigma_g+1}} + n^{-1/2}$ when $\F_g = \W^{\sigma_g,2}$ is also a Hilbert-Sobolev space.
\end{enumerate}
\vspace{-2mm}
\section{Main Results}
\label{sec:summary_of_results}
\vspace{-2mm}
The three \textbf{main technical contributions} of this paper are as follows:
\begin{enumerate}[wide,noitemsep]
    \item We prove lower and upper bounds (Theorems~\ref{thm:nonlinear_lower_bound} and \ref{thm:nonlinear_upper_bound}, respectively) on minimax convergence rates of distribution estimation under IPM losses when the distribution class $\P = B_{p_g,q_g}^{\sigma_g}$ and the discriminator class $\F = B_{p_d,q_d}^{\sigma_d}$ are Besov spaces; these rates match up to polylogarithmic factors in the sample size $n$. Our upper bounds use the wavelet-thresholding estimator proposed in~\citet{donoho1996density}, which we show converges at the optimal rate for a much wider range of losses than previously known. Specifically, if $M(\F, \P)$ denotes minimax risk~\eqref{eq:minimax_risk}, we show that for $p_d'\geq p_g$, $\sigma_g \geq D/p_g$,
	\vspace{-1mm}
    \begin{equation}
    M \left( B_{p_d,q_d}^{\sigma_d}, B_{p_g,q_g}^{\sigma_g} \right)
        \asymp \max \left\{ n^{-1/2}, n^{-\frac{\sigma_g+\sigma_d}{2\sigma_g+D}}, n^{-\frac{\sigma_g+\sigma_d+D \left( 1 - 1/p_g - 1/p_d \right)}{2\sigma_g + D \left( 1 - 2/p_g \right)}} \right\}.
        \label{eq:general_minimax_rate}
    \end{equation}
    \item We show (Theorem~\ref{thm:linear_minimax_rate}) that, for $p_d'\geq p_g$ and $\sigma_g \geq D/p_g$, no estimator in a large class of distribution estimators, called ``linear estimators'', can converge at a rate faster than
    \vspace{-2mm}
    \begin{equation}
        M_{\text{lin}} \left( B_{p_d,q_d}^{\sigma_d}, B_{p_g,q_g}^{\sigma_g}\right)
        \gtrsim n^{-\frac{\sigma_g+\sigma_d - D/p_g + D/p_d'}{2\sigma_g + D \left( 1 - 2/p_g \right)+2D/p_d'}}.
        \label{eq:linear_minimax_rate}
    \end{equation}
    ``Linear estimators'' include the empirical distribution, kernel density estimates with uniform bandwidth, and the orthogonal series estimators recently used in \citet{liang2017well} and \citet{singh2018adversarial}). 
    The lower bound~\eqref{eq:linear_minimax_rate} implies that, in many settings (discussed in Section~\ref{sec:discussion}), linear estimators converge at sub-optimal rates.
    This effect is especially pronounced when the data dimension $D$ is large and the distribution $P$ has relatively sparse support (e.g., if $P$ is supported near a low-dimensional manifold).
    \item We show that the minimax convergence rate can be achieved by a GAN with generator and discriminator networks of bounded size, after some regularization. As one of the first theoretical results separating performance of GANs from that of classic nonparametric tools such as kernel methods, this may help explain GANs' successes with high-dimensional data such as images.
\end{enumerate}
\vspace{-2mm}
\subsection{Minimax Rates over Besov Spaces}
\label{sec:wavelet_upper_bounds}
\vspace{-2mm}
    We now present our main lower and upper bounds for estimating densities that live in a Besov space under a Besov IPM loss.
	Then, we have the following lower bound on the convergence rate:
		\begin{theorem}(\textbf{Lower Bound})
            Let $r>\sigma_g\geq D/p_g$, then,
            \vspace{-1mm}
            \begin{equation}
                M \left( B_{p_d,q_d}^{\sigma_d}, B_{p_g,q_g}^{\sigma_g} \right)
                \gtrsim \max\left(
                n^{-\frac{\sigma_g+\sigma_d}{2\sigma_g+D}}, \left(\frac{\log n}{n}\right)^{\frac{\sigma_g+\sigma_d+D-D/p_g-D/p_d}{2\sigma_g+D-2D/p_g}}
                \right)
                \label{ineq:lower_bound}
            \end{equation}
            \label{thm:nonlinear_lower_bound}
		\end{theorem}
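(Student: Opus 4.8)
The plan is to reduce the statement to a classical two-regime minimax lower bound for density estimation over a Besov ball, in the style of \citet{donoho1996density}, after first disposing of the IPM loss. For the loss, note that for any fixed $f\in B^{\sigma_d}_{p_d,q_d}(L_d)$ we have $d_{B^{\sigma_d}_{p_d,q_d}}(p,q)\geq\left|\int f\cdot(p-q)\,dx\right|$; equivalently, by the wavelet duality $\bigl(B^{\sigma_d}_{p_d,q_d}\bigr)^*\asymp B^{-\sigma_d}_{p_d',q_d'}$, the loss is comparable to $\|p-q\|_{B^{-\sigma_d}_{p_d',q_d'}}$, so that when $p-q$ has wavelet coefficients living at a single resolution $j$, the loss is $\asymp 2^{j(-\sigma_d+D(1/p_d-1/2))}\,\bigl\|\{\beta_\lambda\}_{\lambda\in\Lambda_j}\bigr\|_{l^{p_d'}}$. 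Only the elementary lower direction is needed, witnessed by $f$ proportional to a single-scale wavelet sum. With this reduction, both terms follow from Fano's inequality applied to carefully chosen finite families of densities in $B^{\sigma_g}_{p_g,q_g}(L_g)$; throughout we fix a smooth base density $p_0$ lying in the interior of the ball and bounded below on a fixed open cube, and perturb it using the $m\asymp 2^{jD}$ wavelets $\psi_\lambda$, $\lambda\in\Lambda_j$, whose support lies inside that cube.

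For the first (``dense'') term, take $p_\theta=p_0+\gamma\sum_{\lambda\in\Lambda_j}\theta_\lambda\psi_\lambda$ for $\theta\in\{-1,+1\}^m$. The level-$j$ contribution to $\|p_\theta\|_{B^{\sigma_g}_{p_g,q_g}}$ is $\asymp 2^{j(\sigma_g+D(1/2-1/p_g))}\,m^{1/p_g}\gamma\asymp 2^{j(\sigma_g+D/2)}\gamma$, so membership in the ball forces $\gamma\asymp 2^{-j(\sigma_g+D/2)}$; this also keeps $p_\theta\geq 0$, since the perturbation then has sup-norm $\lesssim 2^{-j\sigma_g}$ by the bounded overlap of the $\psi_\lambda$, and $\int p_\theta=1$ automatically. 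By Varshamov--Gilbert there are $\asymp e^{cm}$ such $\theta$ that are pairwise Hamming-separated by $\gtrsim m$, and for any two of them the discriminator $f\propto 2^{-j(\sigma_d+D/2)}\sum_\lambda\operatorname{sign}(\theta_\lambda-\theta_\lambda')\psi_\lambda\in B^{\sigma_d}_{p_d,q_d}(L_d)$ shows $d_{B^{\sigma_d}_{p_d,q_d}}(p_\theta,p_{\theta'})\gtrsim\gamma\,2^{j(D/2-\sigma_d)}\asymp 2^{-j(\sigma_g+\sigma_d)}$. Since $\operatorname{KL}(p_\theta^{\otimes n}\,\|\,p_0^{\otimes n})\lesssim n\|p_\theta-p_0\|_2^2\asymp n\gamma^2m$, Fano gives a nontrivial bound whenever $n\gamma^2m\lesssim m$, i.e.\ $2^j\asymp n^{1/(2\sigma_g+D)}$, yielding the rate $n^{-(\sigma_g+\sigma_d)/(2\sigma_g+D)}$.

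For the second (``sparse'') term, perturb only one coefficient: $p_\ell=p_0+\gamma\psi_{\lambda_\ell}$, $\ell=1,\dots,m$, together with $p_0$ itself. Now the ball constraint only requires $\gamma\lesssim 2^{-j(\sigma_g+D(1/2-1/p_g))}=2^{-j(\sigma_g+D/2-D/p_g)}$, while nonnegativity requires $\gamma\lesssim 2^{-jD/2}$; under the hypothesis $\sigma_g\geq D/p_g$ the former is the binding one (for $j$ large). The loss separation between any two hypotheses, witnessed by $f\propto 2^{-j(\sigma_d+D/2-D/p_d)}(\psi_{\lambda_\ell}-\psi_{\lambda_{\ell'}})$, is $\gtrsim\gamma\,2^{-j(\sigma_d+D/2-D/p_d)}$, and here $\operatorname{KL}(p_\ell^{\otimes n}\,\|\,p_0^{\otimes n})\lesssim n\gamma^2$ while there are $m\asymp 2^{jD}$ hypotheses, so Fano requires only $n\gamma^2\lesssim\log m\asymp jD$. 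Choosing $\gamma\asymp\sqrt{(\log n)/n}$ and balancing against the ball constraint gives $2^j\asymp(n/\log n)^{1/(2\sigma_g+D-2D/p_g)}$, hence a separation of order $\gamma\,2^{-j(\sigma_d+D/2-D/p_d)}\asymp 2^{-j(\sigma_g+\sigma_d+D-D/p_g-D/p_d)}\asymp\bigl((\log n)/n\bigr)^{\frac{\sigma_g+\sigma_d+D-D/p_g-D/p_d}{2\sigma_g+D-2D/p_g}}$. Taking the larger of the two constructions gives the claimed maximum.

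I expect the main obstacles to be, in order of importance: (i) the wavelet characterization of the dual norm --- equivalently, verifying that a single-scale wavelet sum is a near-extremal discriminator in $B^{\sigma_d}_{p_d,q_d}$ with the stated scaling in $j$ --- together with carrying the Besov radii $L_g,L_d$ through so that $p_0$ plus the perturbation genuinely stays inside $B^{\sigma_g}_{p_g,q_g}(L_g)$; (ii) checking that each $p_\theta$ (resp.\ $p_\ell$) is an honest probability density, which reduces to the amplitude bounds above and is exactly where $\sigma_g\geq D/p_g$ is used in the sparse construction; and (iii) the optimization over the resolution level $j$ in the sparse regime, where it is the \emph{number} of hypotheses rather than their mutual KL divergence that binds in Fano's inequality, producing the logarithmic factor. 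None of these is conceptually deep, but keeping the exponents exact requires care.
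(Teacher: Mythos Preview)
Your proposal is correct and follows essentially the same route as the paper: both arguments run Fano's lemma on two single-resolution wavelet families (a dense $\{\pm1\}$-cube handled via Varshamov--Gilbert with $c_g\asymp 2^{-j(\sigma_g+D/2)}$, and a sparse one-hot family with $c_g\asymp 2^{-j(\sigma_g+D/2-D/p_g)}$), lower-bound the IPM by testing against an explicit single-scale wavelet discriminator in $B^{\sigma_d}_{p_d,q_d}$, control $D_{\mathrm{KL}}$ by the $\L^2$ norm of the perturbation, and then optimize the resolution $j$ exactly as you do. Your anticipated obstacles (i)--(iii) are precisely the points the paper checks, and your use of $\sigma_g\geq D/p_g$ to make the Besov-ball constraint bind over the nonnegativity constraint in the sparse case matches the paper's reasoning.
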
		
		Before giving a corresponding upper bound, we describe the estimator on which it depends.
		
		\textbf{Wavelet-Thresholding:}
		Our upper bound uses the wavelet-thresholding estimator proposed by \citep{donoho1996density}:
		        \begin{align}
					\hat{p}_n
					&=   \sum_{ k\in \Z} \hat{\alpha}_{k} \phi_{k}+
					    \sum_{j=0}^{j_0} \sum_{\lambda\in \Lambda_j} \hat{\beta}_{\lambda}\psi_{\lambda}+
					    \sum_{j= j_0}^{j_1}\sum_{\lambda\in \Lambda_j} \tilde{\beta}_{\lambda} \psi_{\lambda}.
					    \label{eq:wavelet_thresholding_estimator}
				\end{align}
			$\hat p_n$ estimates $p$ via its truncated wavelet expansion, with $\hat{\alpha}_{k} = \frac{1}{n}\sum_{i=1}^n \phi_{k}(X_i)$,
			$\hat{\beta}_{\lambda} = \frac{1}{n}\sum_{i=1}^n \psi_{\lambda}(X_i)$, and
	        $\tilde{\beta}_{\lambda} = \hat{\beta}_{\lambda}\mathbf{1}_{\{\hat{\beta}_{\lambda}>\sqrt{j/n}\}}$ are empirical estimates of respective coefficient of the wavelet expansion of $p$. As \citep{donoho1996density} first showed, attaining optimality over Besov spaces requires truncating high-resolution terms (of order $j \in [j_0,j_1]$) when their empirical estimates are too small; this ``nonlinear'' part of the estimator distinguishes it from the ``linear'' estimators we study in the next section.
	        The hyperparameters $j_0$ and $j_1$ are set to $j_0 = \frac{1}{2\sigma_g+D} \log_2 n$, $j_1 = \frac{1}{2\sigma_g+D-2D/p_g} \log_2 n$.
				
    \begin{theorem}(\textbf{Upper Bound})
        Let $r>\sigma_g\geq D/p_g$ and $p_d'> p_g$. Then, for a constant $C$ depending only on $p_d'$, $\sigma_g$, $p_g$, $q_g$, $D$, $L_g$, $L_d$ and $\norm{\psi_\epsilon}_{p_d'}$,
        \vspace{-1mm}
		\begin{align}
			M \left( B_{p_d,q_d}^{\sigma_d}, B_{p_g,q_g}^{\sigma_g} \right)
				&\leq C \left(
				\sqrt{\log n}\left(n^{-\frac{\sigma_g+\sigma_d}{2\sigma_g+D}}+ n^{-\frac{\sigma_g+\sigma_d-D/p_g+D/p_d'}{2\sigma_g+D-2D/p_g}}
				\right)+n^{-1/2} \right)
			\label{ineq:wavelet_upper_bound}
		\end{align}
		\label{thm:nonlinear_upper_bound}
		\vspace{-2mm}
    \end{theorem}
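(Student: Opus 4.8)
The plan is to pass from the IPM loss to a negative-order Besov norm of the estimation error and then control that norm term-by-term along the wavelet decomposition defining $\hat p_n$. First I would establish the reduction $d_{B^{\sigma_d}_{p_d,q_d}}(p,\hat p_n) \le L_d\,\|p - \hat p_n\|_{B^{-\sigma_d}_{p_d',q_d'}}$. Since the Besov spaces here are defined through wavelet coefficients, for any $f \in B^{\sigma_d}_{p_d,q_d}(L_d)$ with coefficients $(a_k),(b_\lambda)$, Parseval's identity gives $\int f\,(p - \hat p_n) = \sum_k a_k(\alpha_k-\hat\alpha_k) + \sum_\lambda b_\lambda\,(\text{$\lambda$-coefficient of }p - \hat p_n)$, where the coefficient of $p-\hat p_n$ equals $\beta_\lambda - \hat\beta_\lambda$ for levels $j\le j_0$, $\beta_\lambda - \tilde\beta_\lambda$ for $j_0 < j \le j_1$, and $\beta_\lambda$ for $j > j_1$. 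Applying H\"older's inequality within each resolution level (pairing $l^{p_d}$ with $l^{p_d'}$) and then across levels (pairing $l^{q_d}$ with $l^{q_d'}$, using $2^{-j(\sigma_d+D(1/2-1/p_d))} = 2^{j(-\sigma_d+D(1/2-1/p_d'))}$ since $1/p_d+1/p_d'=1$) gives the claimed bound. It therefore suffices to bound $\E\|p-\hat p_n\|_{B^{-\sigma_d}_{p_d',q_d'}}$, which by the triangle inequality I split into four pieces: the scaling-coefficient error; the ``linear'' detail error over $j\in[0,j_0]$; the thresholded error over $j\in[j_0,j_1]$; and the truncation bias over $j>j_1$.

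The linear pieces are a variance computation: each empirical coefficient is an average of i.i.d.\ bounded variables, so $\E|\hat\beta_\lambda - \beta_\lambda|^2 \lesssim n^{-1}\E[\psi_\lambda(X)^2] \lesssim n^{-1}\|p\|_\infty$ (finite since $p$ is bounded under the assumption $\sigma_g\ge D/p_g$), and a Rosenthal-type moment bound, valid because $2^{jD}\lesssim n$ for $j\le j_0$, yields $\E\|\beta^{(j)}-\hat\beta^{(j)}\|_{l^{p_d'}} \lesssim 2^{jD/p_d'}n^{-1/2}$; multiplying by the weight $2^{j(-\sigma_d+D(1/2-1/p_d'))}$ and summing the geometric series over $j\le j_0$ (with $2^{j_0}\asymp n^{1/(2\sigma_g+D)}$) gives a contribution $\lesssim n^{-1/2} + n^{-(\sigma_g+\sigma_d)/(2\sigma_g+D)}$. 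The truncation bias is pure approximation error: since $p_d'\ge p_g$ we have $\|\beta^{(j)}\|_{l^{p_d'}}\le\|\beta^{(j)}\|_{l^{p_g}}$ and each $2^{j(\sigma_g+D(1/2-1/p_g))}\|\beta^{(j)}\|_{l^{p_g}}\le L_g$; the weight ratio $2^{-j(\sigma_g+\sigma_d-D/p_g+D/p_d')}$ decays geometrically (this is where $\sigma_g\ge D/p_g$ enters again), so this term is $\lesssim 2^{-j_1(\sigma_g+\sigma_d-D/p_g+D/p_d')}\asymp n^{-(\sigma_g+\sigma_d-D/p_g+D/p_d')/(2\sigma_g+D-2D/p_g)}$ using $2^{j_1}\asymp n^{1/(2\sigma_g+D-2D/p_g)}$.

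The main obstacle is the thresholded term, for which I would follow the argument of \citet{donoho1996density}, adapted to the negative Besov norm. For each $j\in[j_0,j_1]$ and $\lambda\in\Lambda_j$, I split $\beta_\lambda - \tilde\beta_\lambda$ into the standard four cases according to whether $|\beta_\lambda|$ and $|\hat\beta_\lambda|$ exceed the threshold $\tau_j=\sqrt{j/n}$. In the two ``misclassification'' cases the event $\{|\hat\beta_\lambda-\beta_\lambda|\gtrsim\tau_j\}$ holds, whose probability Bernstein's inequality (with $\|\psi_\lambda\|_\infty\asymp 2^{jD/2}$, $\E\psi_\lambda^2\lesssim\|p\|_\infty$, and $2^{jD}\lesssim n$) shows decays exponentially in $j$; since $j_1\asymp\log n$ these contribute only lower-order terms, but pinning down the exact bookkeeping so that the threshold $\sqrt{j/n}$ really tames all $2^{jD}$ coefficients at each level is the most delicate point. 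In the two main cases I bound the level-$j$ error in $l^{p_d'}$ by separating the ``significant'' coordinates ($|\beta_\lambda|\gtrsim\tau_j$), of which there are at most $(\rho_j/\tau_j)^{p_g}$ with $\rho_j := L_g 2^{-j(\sigma_g+D(1/2-1/p_g))}$ (from $\sum_\lambda|\beta_\lambda|^{p_g}\le\rho_j^{p_g}$), each contributing $\lesssim n^{-1/2}$, from the rest, for which $\sum_{|\beta_\lambda|\le 2\tau_j}|\beta_\lambda|^{p_d'}\le(2\tau_j)^{p_d'-p_g}\rho_j^{p_g}$ --- here the strict inequality $p_d'>p_g$ is essential. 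Both contributions combine to $\|\beta^{(j)}-\tilde\beta^{(j)}\|_{l^{p_d'}}\lesssim\rho_j^{p_g/p_d'}\tau_j^{1-p_g/p_d'}$ up to the lower-order deviation terms; multiplying by the weight $2^{j(-\sigma_d+D(1/2-1/p_d'))}$ and summing over $j\in[j_0,j_1]$, the resulting essentially geometric series is dominated by $j=j_1$ (verifying that the $j_1$ endpoint dominates, rather than $j_0$, in the regime $p_d'>p_g$ is the other thing I would check), and after the exponent arithmetic simplifies it contributes $\sqrt{\log n}\;n^{-(\sigma_g+\sigma_d-D/p_g+D/p_d')/(2\sigma_g+D-2D/p_g)}$, with the $\sqrt{\log n}$ coming from $\tau_{j_1}\asymp\sqrt{(\log n)/n}$. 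Adding the four contributions yields \eqref{ineq:wavelet_upper_bound}.
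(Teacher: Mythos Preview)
Your proposal is correct and follows essentially the same approach as the paper. Your duality reduction $d_{B^{\sigma_d}_{p_d,q_d}}(p,\hat p_n)\le L_d\|p-\hat p_n\|_{B^{-\sigma_d}_{p_d',q_d'}}$ is exactly the content of the paper's coefficient-space bound (its Lemma applying H\"older within and across levels), just phrased more abstractly; the variance, bias, and four-case thresholding analysis then proceed identically to \citet{donoho1996density}, as in the paper.

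One correction to the point you flag for checking: the geometric series over $j\in[j_0,j_1]$ in the ``significant/insignificant'' cases is \emph{not} always dominated by $j=j_1$. The sign of the exponent $(\sigma_g+D/2)p_g/p_d'+\sigma_d-D/2$ determines whether $j_0$ or $j_1$ wins, and both regimes occur under the hypotheses of the theorem. When $j_0$ dominates you recover the dense rate $n^{-(\sigma_g+\sigma_d)/(2\sigma_g+D)}$, and when $j_1$ dominates you get the sparse rate --- each carrying the extra $\sqrt{\log n}$ from the threshold. This is why the stated bound \eqref{ineq:wavelet_upper_bound} has the $\sqrt{\log n}$ factor multiplying \emph{both} rates, not just the sparse one; the linear variance term alone gives the dense rate without the log, but the thresholded contribution in the $j_0$-dominated regime adds the log factor there as well.
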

    We will comment only briefly on Theorems~\ref{thm:nonlinear_lower_bound} and \ref{thm:nonlinear_upper_bound} here, leaving extended discussion for Section~\ref{sec:discussion}. First, note that the lower bound~\eqref{ineq:lower_bound} and upper bound~\eqref{ineq:wavelet_upper_bound} are essentially tight; they differ only by a polylogarithmic factor in $n$. Second, both bounds contain two main terms of interest. The simpler term, $n^{-\frac{\sigma_g+\sigma_d}{2\sigma_g+D}}$, matches the rate observed in the Sobolev case by \citet{singh2018adversarial}. The other term is unique to more general Besov spaces. Depending on the values of $D,\sigma_d,\sigma_g,p_d$, and $p_g$, one of these two terms dominates,
    leading to two main regimes of convergence rates, which we call the ``Sparse'' regime and the ``Dense'' regime. Section~\ref{sec:discussion} discusses these and other interesting phenomena in detail.
\vspace{-2mm}
\subsection{Minimax Rates of Linear Estimators over Besov Spaces}
\label{sec:linear_estimators}
\vspace{-2mm}
    We now show that, for many Besov densities and IPM losses, many widely-used nonparametric density estimators cannot converge at the optimal rate~\eqref{thm:nonlinear_upper_bound}. These estimators are as follows:
    \begin{definition}[Linear Estimator] 
    Let $(\Omega, \F, P)$ be a probability space. An estimate $\hat P$ of $P$ is said to be \emph{linear} if there exist functions $T_i(X_i,\cdot) : \F \to \R$ such that for all measurable $A \in \F$,
    \vspace{-2mm}
    \begin{equation}
        \hat{P}(A) = \sum_{i = 1}^n T_i(X_i,A).
        \label{eq:linear_estimate}
    \end{equation}
    \end{definition}
    Classic examples of linear estimators include the empirical distribution ($T_i(X_i, A) = \frac{1}{n} 1_{\{X_i \in A\}}$, the kernel density estimate ($T_i(X_i,A) = \frac{1}{n} \int_A K(X_i,\cdot)$ for some bandwidth $h > 0$ and smoothing kernel $K : \X \times \X \to \R$) and the orthogonal series estimate ($T_i(X_i, A) = \frac{1}{n} \sum_{j = 1}^J g_j(X_i) \int_A g_j$ for some cutoff $J$ and orthonormal basis $\{g_j\}_{j = 1}^\infty$ (e.g., Fourier, wavelet, or polynomial) of $\L^2(\Omega)$).
    
    \begin{theorem}[Minimax rate for Linear Estimators]
        Suppose $r>\sigma_g\geq D/p_g$,
        \vspace{-1mm}
            \begin{align*}
                M_{\text{lin}} \left( B_{p_d,q_d}^{\sigma_d}, B_{p_g,q_g}^{\sigma_g} \right)
                    := \inf_{\hat{P}_{\text{lin}}} \sup_{p \in \F_g} \E_{X_{1:n}} \left[ d_{\F_d} \left( \mu_p, \hat{P} \right) \right]
                    \asymp n^{-\frac12} + n^{-\frac{\sigma_g+\sigma_d-D/p_g+D/p_d'}{2\sigma_g+D-2D/p_g+2D/p_d'}} + n^{-\frac{\sigma_g+\sigma_d}{2\sigma_g+D}}
            \end{align*}
            where the $\inf$ is over all linear estimates of $p\in \F_g$, and $\mu_p$ is the distribution with density $p$.
            \label{thm:linear_minimax_rate}
    \end{theorem}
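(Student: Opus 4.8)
The claimed rate is the maximum of its three displayed terms (a finite sum of nonnegative quantities is comparable to their maximum), so it suffices to prove a matching upper and lower bound for that maximum. By Besov duality I identify $d_{B^{\sigma_d}_{p_d,q_d}}(\mu_p,\hat P)$ with the $B^{-\sigma_d}_{p_d',q_d'}$ (pseudo)norm of $\mu_p-\hat P$, read off the wavelet coefficients level by level; the task thus becomes estimating the wavelet coefficients of $p$ under a weighted $\ell^{p_d'}(\ell^{q_d'})$ loss, subject to $\|p\|_{B^{\sigma_g}_{p_g,q_g}}\le L_g$. Since $q_d,q_g$ do not appear in the rate, I reduce to $q_d,q_g\in\{1,\infty\}$ using $B^\sigma_{p,1}\subseteq B^\sigma_{p,q}\subseteq B^\sigma_{p,\infty}$; every hard family below lives at a single resolution level, where the Besov norm is essentially independent of the fine index, so this is harmless.

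\emph{Upper bound.} It suffices to exhibit one linear estimator attaining the rate, and I take the truncated wavelet series $\hat p_{j_1}=\sum_k\hat\alpha_k\phi_k+\sum_{j\le j_1}\sum_{\lambda\in\Lambda_j}\hat\beta_\lambda\psi_\lambda$ (an orthogonal-series estimate, hence linear), with $j_1$ chosen below. The triangle inequality gives the bias-variance split of $\E\|p-\hat p_{j_1}\|_{B^{-\sigma_d}_{p_d',q_d'}}$. For the bias $\|(I-P_{V_{j_1}})p\|_{B^{-\sigma_d}_{p_d',q_d'}}$ I use the Besov embedding $B^{\sigma_g}_{p_g,q_g}\hookrightarrow B^{\sigma_g-D(1/p_g-1/p_d')_+}_{p_d',q_g}$ (a finite-dimensional Hölder inequality at each level when $p_d'<p_g$), obtaining $\lesssim 2^{-j_1(\sigma_g+\sigma_d-D(1/p_g-1/p_d')_+)}$. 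For the variance I bound the expected $B^{-\sigma_d}_{p_d',q_d'}$ norm of the centered empirical coefficients level by level through moment inequalities — each coefficient has variance $\lesssim n^{-1}$, using that $p$ is bounded (which follows from $\sigma_g\ge D/p_g$) — giving a per-level contribution $\lesssim 2^{j(D/2-\sigma_d)_+}n^{-1/2}$ and hence $\lesssim 2^{j_1(D/2-\sigma_d)_+}n^{-1/2}$; the level-$0$ scaling coefficients always contribute $\asymp n^{-1/2}$. Balancing bias against variance over $j_1$, and distinguishing the cases $p_d'\gtrless p_g$ and $\sigma_d\gtrless D/2$, produces precisely $n^{-(\sigma_g+\sigma_d)/(2\sigma_g+D)}$, $n^{-(\sigma_g+\sigma_d-D/p_g+D/p_d')/(2\sigma_g+D-2D/p_g+2D/p_d')}$, and $n^{-1/2}$ as the candidate rates, whose maximum is the claimed bound; no $\log$ factor arises because the variance is controlled in expectation, with no thresholding and no union bound over levels.

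\emph{Lower bound.} I take the maximum over three sub-experiments. (i) The term $n^{-1/2}$ comes from comparing $p_0$ with $p_0+cn^{-1/2}\psi_{\lambda_0}$ for a fixed coarse $\lambda_0$: these are $\Theta(n^{-1/2})$ apart in $B^{-\sigma_d}_{p_d',q_d'}$, both lie in $B^{\sigma_g}_{p_g,q_g}(L_g)$ for small $c$, and their $n$-fold products have bounded KL divergence. (ii) The term $n^{-(\sigma_g+\sigma_d)/(2\sigma_g+D)}$ comes from a dense Assouad cube at a single level $j^*$ with $2^{Dj^*}\asymp n^{D/(2\sigma_g+D)}$ and signs $\pm\tau$, $\tau\asymp 2^{-j^*(\sigma_g+D/2)}$, which saturates the Besov ball; this holds for every estimator and is already in Theorem~\ref{thm:nonlinear_lower_bound}. (iii) The linear-specific term exploits the fact that a linear estimator cannot accommodate a single large coefficient. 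Fix a level $j^*$ and the family $\{p_0\}\cup\{p_0\pm\rho\psi_\lambda:\lambda\in\Lambda_{j^*}'\}$, where $\Lambda_{j^*}'$ is a constant fraction of $\Lambda_{j^*}$ chosen so that the relevant wavelet supports lie where $p_0$ is constant, and $\rho\asymp L_g 2^{-j^*(\sigma_g+D/2-D/p_g)}$ is the largest single-coefficient perturbation allowed by $B^{\sigma_g}_{p_g,q_g}(L_g)$; using $\sigma_g\ge D/p_g$ one checks each member is a density in this ball. By the standard reduction of linear density estimation to the Gaussian sequence model, and by the permutation and sign symmetry of this family and of the $\ell^{p_d'}$ loss, any linear estimator has worst-case risk over the family no smaller than that of a scalar shrinkage $\hat\beta\mapsto b\hat\beta$ of the level-$j^*$ coefficients, namely $\gtrsim w_{j^*}\min\{\rho,\ 2^{Dj^*/p_d'}n^{-1/2}\}$ with $w_{j^*}=2^{j^*(-\sigma_d+D/2-D/p_d')}$ — small $b$ pays bias $\asymp\rho$ against a spike, while $b$ near $1$ spreads noise of $\ell^{p_d'}$-size $\asymp 2^{Dj^*/p_d'}n^{-1/2}$ over all coordinates (equivalently, the quadratic convex hull of an $\ell^{p_g}$ ball already contains all such spikes: this is the Donoho--Liu--MacGibbon principle). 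Choosing $2^{j^*}\asymp n^{1/(2\sigma_g+D-2D/p_g+2D/p_d')}$ to balance the two quantities yields the exponent $(\sigma_g+\sigma_d-D/p_g+D/p_d')/(2\sigma_g+D-2D/p_g+2D/p_d')$, and the maximum of (i)--(iii) matches the upper bound.

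\emph{Main obstacle.} The crux is part (iii): showing that no linear estimate of the density — not just the empirical wavelet coefficients, but any $\hat P(A)=\sum_iT_i(X_i,A)$ — can exploit sparsity, under a loss that is a possibly non-Hilbertian negative-order Besov norm rather than $\L^2$. What makes this tractable is that the hard family, the optimized bias-variance tradeoff, and the quadratic-convex-hull computation all collapse onto a single resolution level, where the loss is a plain weighted $\ell^{p_d'}$ norm and the constraint set is a scaled $\ell^{p_g}$ ball; there the reduction to scalar shrinkage and the spike-versus-noise comparison are transparent. The remaining work — moment bounds for empirical wavelet coefficients, the $p_d'\gtrless 2$ and $\sigma_d\gtrless D/2$ case splits in the variance, verifying the constructed densities lie in $B^{\sigma_g}_{p_g,q_g}(L_g)$, and the reduction in $q_d,q_g$ — is routine.
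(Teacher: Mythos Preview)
Your proposal is correct and follows essentially the same route as the paper: the same linear (non-thresholded) wavelet estimator with the same bias and variance lemmas and choice of truncation level for the upper bound, and the same single-spike family $\{g_0\pm c_g\psi_\lambda:\lambda\in\Lambda_{j^*}'\}$ at one resolution level for the linear-specific lower bound (your (iii)), with the paper writing the IPM out in coefficient space and then deferring the final spike-vs-noise bound to \citet{donoho1996density}, Appendix~A.3, rather than arguing via symmetry and scalar shrinkage as you do. One small caution on phrasing: there is no literal ``reduction of linear density estimation to the Gaussian sequence model'' --- the noise here is not Gaussian, and the rigorous argument (both yours in spirit and Donoho et al.'s in detail) stays in the density model and computes bias and variance of the estimated wavelet coefficients directly, using that every density in the family is uniformly close to $g_0$.
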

    One can check that the above error decays no faster than $n^{-\frac{\sigma_g+\sigma_d+D-D/p_g-D/p_d}{2\sigma_g+D-2D/p_g}}$.
    Comparing with the rate in Theorem~\ref{thm:nonlinear_upper_bound}, this implies that, in certain cases, convergence the rate for linear estimators is strictly slower than that for general estimators; i.e., linear estimators fail to achieve the minimax optimal rate over certain Besov space. We defer detailed discussion of this phenomenon to Section~\ref{sec:discussion}.
\vspace{-2mm}
\subsection{Upper Bounds on a Generative Adversarial Network}
\label{sec:GAN_upper_bounds}
\vspace{-2mm}
Pioneered by \citet{goodfellow2014GANs} as a mechanism for applying deep neural networks to the problem of unsupervised image generation, Generative adversarial networks (GANs) have since been widely applied not only to computer vision~\citep{zhang2017stackgan,ledig2017photo}, but also to such diverse problems and data as machine translation using natural language data~\citep{yang2017machineTranslation}, discovering drugs~\citep{kadurin2017drugan} and designing materials~\citep{sanchez2017optimizing} using molecular structure data, inferring expression levels using gene expression data~\citep{ghasedi2018semi}, and sharing patient data under privacy constraints using electronic health records~\citep{choi2017patientRecords}. Besides the Jensen-Shannon divergence used by~\citep{goodfellow2014GANs}, many GAN formulations have been proposed based on minimizing other losses, including the Wasserstein metric~\citep{arjovsky2017wasserstein,gulrajani2017improved}, total variation distance~\citep{lin2018pacgan}, $\chi^2$ divergence~\citep{mao2017least}, MMD~\citep{li2017mmd}, Dudley metric~\citep{abbasnejad2018deep}, and Sobolev metric~\citep{mroueh2017sobolev}.
The diversity of data types and losses with which GANs have been used motivates studying GANs in a very general (nonparametric) setting.
In particular, Besov spaces likely comprise the largest widely-studied family of nonparametric smoothness class; indeed, most of the losses listed above are Besov IPMs.


GANs are typically described as a two-player minimax game between a generator network $N_g$ and a discriminator network $N_d$; we denote by $\F_d$ the class of functions that can be implemented by $N_d$ and by $\F_g$ the class of distributions that can be implemented by $N_g$. A recent line of work has argued that a natural statistical model for a GAN as a distribution estimator is
\vspace{-1mm}
\begin{equation}
    \hat P := \argmin_{Q \in \F_g} \sup_{f \in \F_d} \E_{X \sim Q} \left[ f(X) \right] - \E_{X \sim \tilde P_n} \left[ f(X) \right],
    \label{eq:GAN_estimate}
\end{equation}
where $\tilde P_n$ is an (appropriately regularized) empirical distribution, and that, when $\F_d$ and $\F_g$ respectively approximate classes $\F$ and $\P$ well, one can bound the risk, under $\F$-IPM loss, of estimating distributions in $\P$ by~\eqref{eq:GAN_estimate}~\citep{liu2017approximationInGANs,liang2017well,singh2018adversarial,liang2018well}.
We emphasize, that, as \citet{singh2018adversarial} showed, the minimax risk in this framework is identical to that under the ``sampling'' (or ``implicit generative modeling''~\citep{mohamed2016learning}) framework in terms of which GANs are usually cast.
\footnote{As in these previous works, we assume implicitly that the optimum~\eqref{eq:GAN_estimate} can be computed; this complex saddle-point problem is itself the subject of a related but distinct and highly active area of work~\citep{nagarajan2017gradient,arjovsky2017towards,liang2018interaction,gidel2018negative}.}

In this section, we show such a result for Besov spaces; namely, we show the existence of a particular GAN (specifically, a sequence of GANs, necessarily growing with the sample size $n$), that estimates distributions in a Besov space at the minimax optimal rate~\eqref{ineq:wavelet_upper_bound} under Besov IPM losses. This construction uses a standard neural network architecture (a fully-connected neural network with rectified linear unit (ReLU) activations), and a simple data regularizer $\tilde P_n$, namely the wavelet-thresholding estimator described in Section~\ref{sec:wavelet_upper_bounds}.
Our results extend those of \citet{liang2017well} and \citet{singh2018adversarial}, for Wasserstein loss over Sobolev spaces, to general Besov IPM losses over Besov spaces.
We begin with a formal definition of the network architectures that we consider:

 \begin{definition}
    A \emph{fully-connected ReLU network} $f_{(A_1,...,A_H),(b_1,...,b_H)} : \R^W \to \R$ has the form
    \[ 
        A_H \eta \left( A_{H-1} \eta \left( \cdots \eta(A_1 x + b_1) \cdots \right) + b_{H - 1} \right) + b_H,
    \]
    where, for each $\ell \in [H - 1]$, $A_\ell \in \R^{W \times W}$, and $A_H \in \R^{1 \times W}$ and the ReLU operation $\eta(x) = \max\{x,0\}$ is applied element-wise to vectors in $\R^W$.
    \end{definition}
    The size of $f_{(A_1,...,A_H),(b_1,...,b_H)}(x)$ can be measured in terms of the following four (hyper)parameters:  the \emph{depth} $H$, the \emph{width} $W$, the \emph{sparsity} $S := \sum_{\ell \in [H]} \|A_\ell\|_{0,0} + \|b_\ell\|_0$ (i.e., the total number of non-zero weights), and the \emph{maximum weight} $B := \max \{\|A_\ell\|_{\infty,\infty}, \|b_\ell\|_\infty : \ell \in [H]\}$.
    For given size parameters $H,W,S,B$ we write $\Phi(H,W,S,B)$ to denote the set of functions satisfying the corresponding size constraints.

Our results rely on a recent construction (Lemma 17 in the Appendix), by \citep{suzuki2018adaptivity}, of a fully-connected ReLU network that approximates Besov functions.
\citep{suzuki2018adaptivity} used this approximation to bound the risk of a neural network for nonparametric regression over Besov spaces, under $\L^r$ loss. Here, we use this approximation result Lemma 17 to bound the risk of a GAN for nonparametric distribution estimation over Besov spaces, under the much larger class of Besov IPM losses. Our precise result is as follows:

\begin{theorem}[Convergence Rate of a Well-Optimized GAN]
    \label{thm:GAN_upper_bound}
    Fix a Besov density class $B_{p_g,q_g}^{\sigma_g}$
    with $\sigma_g > D/p_g$ and discriminator class $B_{p_d,q_d}^{\sigma_d}$ with $\sigma_d>D/p_d$. Then, for any desired approximation error $\epsilon > 0$, one can construct a GAN $\hat p$ of the form~\eqref{eq:GAN_estimate} (with $\tilde{p}_n$)  with discriminator network $N_d \in \Phi(H_d,W_d,S_d,B_d)$ and generator network $N_g \in \Phi(H_g,W_g,S_g,B_g)$, s.t. for all $p \in B_{p_g,q_g}^{\sigma_g}$
    \begin{align*}
        \E \left[ d_{B_{p_d,q_d}^{\sigma_d}} \left( \hat{p}, p \right) \right] \lesssim
        \epsilon + 
        \E
        d_{B^{\sigma_d}_{p_d,q_d}}(\tilde{p}_n,p)
        \end{align*}
   where $H_d$, $H_g$ grow logarithmically with $1/\epsilon$, $W_d,S_d,B_d,W_g,S_g$, $B_g$ grow polynomially with $1/\epsilon$ and $C > 0$ is a constant that depends only on $B_{p_d,q_d}^{\sigma_d}$ and $B_{p_g,q_g}^{\sigma_g}$.
\end{theorem}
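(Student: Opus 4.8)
The plan is to prove an oracle-type inequality that splits the error of the GAN $\hat p$ into a network-approximation part (driven to $\epsilon$ at the cost of larger architectures) and the statistical error of the regularizer $\tilde p_n$, which is exactly the term appearing in the conclusion and which Theorem~\ref{thm:nonlinear_upper_bound} already controls. Recall that $\hat p = \argmin_{Q \in \F_g} d_{\F_d}(Q, \tilde p_n)$, where $\F_d$ (resp.\ $\F_g$) is the class of functions (resp.\ distributions) realizable by the discriminator (resp.\ generator) network; since the ReLU classes $\Phi(\cdot)$ are symmetric under negation, the objective in \eqref{eq:GAN_estimate} coincides with $d_{\F_d}(Q,\tilde p_n)$. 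There are three error sources to control: the discriminator approximation error (the gap between the true IPM $d_{B^{\sigma_d}_{p_d,q_d}}$ and the network IPM $d_{\F_d}$), the generator approximation error (how well the distributions realizable by $N_g$ approximate $\mu_p$), and the statistical error $d_{B^{\sigma_d}_{p_d,q_d}}(\tilde p_n, p)$ of the regularizer. The first step is to fix the architectures: given $\epsilon > 0$, apply the ReLU approximation result of \citep{suzuki2018adaptivity} (Lemma~17) to choose $(H_d,W_d,S_d,B_d)$ --- with $H_d = O(\log(1/\epsilon))$ and $W_d,S_d,B_d = \mathrm{poly}(1/\epsilon)$ --- so that $\F_d = \Phi(H_d,W_d,S_d,B_d)$ both (a) contains, for every $f \in B^{\sigma_d}_{p_d,q_d}(L_d)$, a network $\bar f$ with $\norm{f - \bar f}_\infty \le \epsilon$, and (b) is contained, up to $\norm{\cdot}_\infty$-error $\epsilon$, in an inflated ball $B^{\sigma_d}_{p_d,q_d}(C_1 L_d)$; the hypothesis $\sigma_d > D/p_d$ makes uniform approximation meaningful since then $B^{\sigma_d}_{p_d,q_d} \hookrightarrow \L^\infty$. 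A parallel step on the generator side --- using $\sigma_g > D/p_g$ so that $p$ is bounded and continuous --- produces $(H_g,W_g,S_g,B_g)$ for which $\F_g$ contains a distribution $Q^\ast$ with $d_{\F_d}(Q^\ast,\mu_p) \le \epsilon$.

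Granting this construction, the bound follows from a short chain of triangle inequalities. First, $d_{B^{\sigma_d}_{p_d,q_d}}(\hat p, p) \le d_{B^{\sigma_d}_{p_d,q_d}}(\hat p, \tilde p_n) + d_{B^{\sigma_d}_{p_d,q_d}}(\tilde p_n, p)$, the last term being the one in the statement. Property (a) yields the one-sided comparison $d_{B^{\sigma_d}_{p_d,q_d}(L_d)}(\mu,\nu) \le d_{\F_d}(\mu,\nu) + 2\epsilon$ for all measures $\mu,\nu$ (replace each $f$ by $\bar f$, paying $\epsilon$ under $\mu$ and under $\nu$); with $\mu = \hat p$, $\nu = \tilde p_n$ this gives $d_{B^{\sigma_d}_{p_d,q_d}}(\hat p,\tilde p_n) \le d_{\F_d}(\hat p,\tilde p_n) + 2\epsilon$. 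By optimality of $\hat p$, $d_{\F_d}(\hat p,\tilde p_n) \le d_{\F_d}(Q^\ast,\tilde p_n) \le d_{\F_d}(Q^\ast,\mu_p) + d_{\F_d}(\mu_p,\tilde p_n) \le \epsilon + d_{\F_d}(\mu_p,\tilde p_n)$. Property (b) gives the reverse comparison $d_{\F_d}(\mu_p,\tilde p_n) \le C_1\, d_{B^{\sigma_d}_{p_d,q_d}(L_d)}(\mu_p,\tilde p_n) + 2\epsilon$ (each $g \in \F_d$ is within $\epsilon$ in $\norm{\cdot}_\infty$ of $B^{\sigma_d}_{p_d,q_d}(C_1 L_d)$, and $d_{B^{\sigma_d}_{p_d,q_d}(C_1 L_d)} = C_1\, d_{B^{\sigma_d}_{p_d,q_d}(L_d)}$ by homogeneity). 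Chaining these and taking expectations gives $\E[d_{B^{\sigma_d}_{p_d,q_d}}(\hat p, p)] \lesssim \epsilon + \E\, d_{B^{\sigma_d}_{p_d,q_d}}(\tilde p_n, p)$, which is the claim; substituting the bound of Theorem~\ref{thm:nonlinear_upper_bound} for the wavelet-thresholding estimator $\tilde p_n$ then recovers the minimax rate \eqref{ineq:wavelet_upper_bound}, so the GAN matches the wavelet estimator.

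The main obstacle is the two ``sandwiching'' facts (a) and (b) and their generator-side analogue. Direction (a), the expressivity of ReLU nets for Besov functions, is precisely Suzuki's Lemma~17. Direction (b) --- that the approximating networks themselves have Besov norm at most a constant multiple of $L_d$ --- is the delicate point: it requires tracking the \emph{norm} of the wavelet-based construction underlying Lemma~17, not merely its approximation error, i.e.\ checking that truncating and discretizing a near-optimal wavelet expansion inflates the Besov norm only by a constant. On the generator side, the subtlety is that sup-norm closeness of a network to a transport map (or to a target density) does not by itself force closeness of the induced distributions in an arbitrary IPM; one handles this either by letting $N_g$ parametrize the density $\bar p$ of $Q^\ast$ directly and bounding $d_{\F_d}(Q^\ast,\mu_p) \le (\sup_{f \in \F_d}\norm{f}_\infty)\,\norm{\bar p - p}_1$ with $\norm{\bar p - p}_1$ small, or by exploiting the uniform boundedness (and, when $\sigma_d > 1$, Lipschitz control) of $\F_d$ that again comes out of the Lemma~17 bookkeeping. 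The remaining details --- renormalizing a network output into a genuine probability density, and the claimed logarithmic/polynomial growth of $(H_d,W_d,S_d,B_d)$ and $(H_g,W_g,S_g,B_g)$ with $1/\epsilon$ --- are routine consequences of Lemma~17.
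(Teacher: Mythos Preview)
Your proposal is correct and follows essentially the same route as the paper: both proofs combine Suzuki's ReLU approximation result (Lemma~17) with a triangle-inequality/oracle decomposition into generator approximation, discriminator approximation, and the statistical error of $\tilde p_n$. The paper quotes Liang's oracle inequality (Inequality~2.2 of \citet{liang2017well}) for the decomposition, whereas you re-derive an equivalent one from scratch; and the paper handles the generator term via H\"older ($d_{\F_d}(p,q) \le \|f\|_{\L^1}\|p-q\|_{\L^\infty}$) rather than your $\|f\|_\infty\|p-q\|_1$ route, but these are interchangeable.

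One remark: you flag property~(b) --- that the approximating networks themselves lie in (an inflation of) $B^{\sigma_d}_{p_d,q_d}$ --- as ``the delicate point'' requiring extra bookkeeping. In fact this is part of the \emph{statement} of Lemma~17 as the paper records it: $\Phi(H,W,S,B) \subseteq B^\sigma_{p,q}(1)$ holds exactly, not merely up to $\epsilon$, so no norm-tracking is needed and your reverse comparison reads $d_{\F_d} \le C_1\, d_{B^{\sigma_d}_{p_d,q_d}(L_d)}$ without the extra $2\epsilon$. This makes the argument shorter than you anticipate.
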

    
     This theorem implies that the rate of convergence of the GAN estimate $\hat{p}$ of the form \ref{eq:GAN_estimate} is the same as the convergence rate of the estimator $\tilde{p}_n$ with which the GAN estimate is generated (Here we assume that all distributions have densities).  Therefore, given our upper bound from theorem \ref{thm:nonlinear_upper_bound} we have the following direct consequence.
     
    \begin{corollary}
        For a Besov density class $B_{p_g,q_g}^{\sigma_g}$ with $\sigma_g > D/p_g$ and discriminator class $B_{p_d,q_d}^{\sigma_d}$ with $\sigma_d>D/p_d$ there exists an appropriately constructed GAN estimate $\hat{p}$ s.t.
        \[
            d_{\F_d}(\hat{p},p)\leq 
                \left(
                    n^{-\eta(D,\sigma_d,p_d,\sigma_g,p_g)} \sqrt{\log n} 
                \right)
        \]
        where $\eta(D,\sigma_d,p_d,\sigma_g,p_g) = \min \left\{ \frac{1}{2}, \frac{\sigma_g+\sigma_d}{2\sigma_g+D}, \frac{\sigma_g+\sigma_d + D - D/p_g - D/p_d'}{2\sigma_g + D \left( 1 - 2/p_g \right)} \right\}$ is the exponent from~\eqref{ineq:wavelet_upper_bound}.
    \end{corollary}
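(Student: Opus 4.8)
The plan is to derive the corollary directly from the GAN bound of Theorem~\ref{thm:GAN_upper_bound} together with the wavelet-thresholding upper bound of Theorem~\ref{thm:nonlinear_upper_bound}: the only work is to instantiate the free approximation parameter $\epsilon$ of Theorem~\ref{thm:GAN_upper_bound} and then collapse the resulting sum of rates into the single exponent $\eta$.

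First I would take the regularizer $\tilde p_n$ appearing in Theorem~\ref{thm:GAN_upper_bound} to be exactly the wavelet-thresholding estimator $\hat p_n$ of \eqref{eq:wavelet_thresholding_estimator}, and I would set $\epsilon = n^{-1/2}$. Any sequence decaying at least as fast as the target rate works; $n^{-1/2}$ is a convenient universal choice, since $\eta = \min\{\tfrac12,\,\cdot\,,\,\cdot\,\} \le \tfrac12$ guarantees $n^{-1/2} \le \sqrt{\log n}\,n^{-\eta}$ for all large $n$. With these choices, Theorem~\ref{thm:GAN_upper_bound} supplies a GAN $\hat p$ of the form \eqref{eq:GAN_estimate} whose generator and discriminator networks have depth $O(\log n)$ and remaining size parameters polynomial in $n$, and such that for every $p \in B^{\sigma_g}_{p_g,q_g}$
\[
  \E\left[ d_{B^{\sigma_d}_{p_d,q_d}}(\hat p, p)\right] \;\lesssim\; n^{-1/2} + \E\left[ d_{B^{\sigma_d}_{p_d,q_d}}(\hat p_n, p)\right].
\]

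Next I would control the remaining expectation using Theorem~\ref{thm:nonlinear_upper_bound}, whose bound the wavelet-thresholding estimator attains and whose hypotheses ($\sigma_g \ge D/p_g$, and $p_d' > p_g$ in the regime where the second rate term is binding) hold under the assumptions of the corollary:
\[
  \E\left[ d_{B^{\sigma_d}_{p_d,q_d}}(\hat p_n, p)\right] \;\le\; C\left( \sqrt{\log n}\,\Big( n^{-\frac{\sigma_g+\sigma_d}{2\sigma_g+D}} + n^{-\frac{\sigma_g+\sigma_d-D/p_g+D/p_d'}{2\sigma_g+D-2D/p_g}} \Big) + n^{-1/2} \right).
\]
Chaining the two displays, it remains only to simplify the sum of the three surviving terms: $n^{-a}+n^{-b} \asymp n^{-\min(a,b)}$ merges the two exponents in the inner parenthesis into their minimum; $n^{-1/2} \le \sqrt{\log n}\,n^{-\eta}$ absorbs the parametric term (this is where $\eta \le \tfrac12$ is used); and, after rewriting $1/p_d'$ via the H\"older identity $1/p_d + 1/p_d' = 1$, the exponent $\frac{\sigma_g+\sigma_d-D/p_g+D/p_d'}{2\sigma_g+D-2D/p_g}$ of \eqref{ineq:wavelet_upper_bound} is exactly the dense-regime exponent of \eqref{eq:general_minimax_rate}, i.e.\ the third argument of the minimum defining $\eta$. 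Hence $\E[ d_{B^{\sigma_d}_{p_d,q_d}}(\hat p, p)] \lesssim \sqrt{\log n}\, n^{-\eta}$, which is the claimed bound (read as a bound on expected IPM risk), with the implied constant depending only on the two Besov balls.

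There is no genuine obstacle at the level of the corollary — it is bookkeeping on top of Theorems~\ref{thm:GAN_upper_bound} and \ref{thm:nonlinear_upper_bound} — and essentially all of the substance sits in Theorem~\ref{thm:GAN_upper_bound}, whose proof is an oracle inequality: one triangle-splits $d_{\F_d}(\hat p, p) \le d_{\F_d}(\hat p, \tilde p_n) + d_{\F_d}(\tilde p_n, p)$, replaces the Besov discriminator class $\F_d$ and the Besov density class by their ReLU-network surrogates at the cost of the Lemma~17 approximation errors (the embeddings $B^{\sigma_d}_{p_d,q_d} \hookrightarrow L^\infty$ and $B^{\sigma_g}_{p_g,q_g} \hookrightarrow L^\infty$, valid since $\sigma_d > D/p_d$ and $\sigma_g > D/p_g$, being what make the sup-norm discriminator approximation and the bounded-density generator approximation legitimate), and uses that $\hat p$ minimizes the network IPM distance to $\tilde p_n$. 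The only details to keep an eye on in the corollary itself are therefore: the elementary rate comparisons just used (in particular $\eta \le \tfrac12$, so the one term lacking the extra $\sqrt{\log n}$ factor is dominated), and the fact that Theorem~\ref{thm:GAN_upper_bound}'s size guarantees, stated as $\mathrm{poly}(1/\epsilon)$ and $\log(1/\epsilon)$, become polynomial and logarithmic in the sample size $n$ once $\epsilon = n^{-1/2}$ is substituted.
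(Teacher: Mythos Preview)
Your proposal is correct and matches the paper's approach: the paper treats the corollary as an immediate consequence of Theorem~\ref{thm:GAN_upper_bound} combined with Theorem~\ref{thm:nonlinear_upper_bound}, without even writing out the choice of $\epsilon$ or the rate-collapsing algebra that you spell out. Your added details (taking $\epsilon = n^{-1/2}$, using $\eta \leq 1/2$ to absorb the parametric term, and the H\"older-conjugate rewriting of the sparse exponent) are exactly the bookkeeping the paper leaves to the reader.
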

In other words there is a GAN estimate that is minimax rate optimal for a smooth class of densities over an IPM generated by a smooth class of discriminator functions.

\section{Discussion of Results}
\label{sec:discussion}
\vspace{-2mm}
In this section, we discuss some general phenomena that can be gleaned from our technical results.

First, we note that, perhaps surprisingly, $q_d$ and $q_g$ do not appear in our bounds. \citet{tao2011functionSpaces} suggests that $q_d$ and $q_g$ may have only logarithmic effects (contrasted with the polynomial effects of $\sigma_d$, $p_d$, $\sigma_g$, and $p_g$). Thus, a more fine-grained analysis to close the polylogarithmic gap between our lower and upper bounds for general estimators (Theorems~\ref{thm:nonlinear_lower_bound} and \ref{thm:nonlinear_upper_bound}) might require incorporating $q_d$ and $q_g$.

On the other hand, the parameters $\sigma_d$, $p_d$, $\sigma_g$, and $p_g$ each play a significant role in determining minimax convergence rates, in both the linear and general cases. We first discuss each of these parameters independently, and then discuss some interactions between them.
\vspace{-2mm}
\paragraph{Roles of the smoothness orders $\sigma_d$ and $\sigma_g$}
As a visual aid for understanding our results, Figure~\ref{fig:phase_diagrams} show phase diagrams of minimax convergence rates, as functions of discriminator smoothness $\sigma_d$ and distribution smoothness $\sigma_g$, in the illustrative case $D = 4$, $p_d = 1.2$, $p_g = 2$. When $1/p_g + 1/p_d > 1$, a minimum total smoothness $\sigma_d + \sigma_g \geq D(1/p_d + 1/p_g - 1)$ is needed for consistent estimation to be possible -- this fails in the ``Infeasible'' region of the phase diagrams. Intuitively, this occurs because $\F_d$ is not contained in the topological dual $\F_g'$ of $\F_g$.
For linear estimators, even greater smoothness $\sigma_d + \sigma_g \geq D(1/p_d + 1/p_g)$ is needed. At the other extreme, for highly smooth discriminator functions, both linear and nonlinear estimators converge at the parametric rate $O \left( n^{-1/2} \right)$, corresponding to the ``Parametric'' region. In between, rates for linear estimators vary smoothly with $\sigma_d$ and $\sigma_g$, while rates for nonlinear estimators exhibit another phase transition on the line $\sigma_g + 3\sigma_d = D$; to the left lies the ``Sparse'' case, in which estimation error is dominated by a small number of large errors at locations where the distribution exhibits high local variation; to the right lies the ``Dense'' case, where error is relatively uniform on the sample space.

The left boundary $\sigma_d = 0$ corresponds to the classical results of \citet{donoho1996density}, who consequently identified the ``Infeasible'', ``Sparse'', and ``Dense'' phases, but not the ``Parametric'' phase. When restricting to linear estimators, the ``Infeasible'' region grows and the ``Parametric'' region shrinks.

\begin{figure}
	\centering
	\begin{subfigure}[t]{0.33\linewidth}
		\centering
		\includegraphics[width=\linewidth]{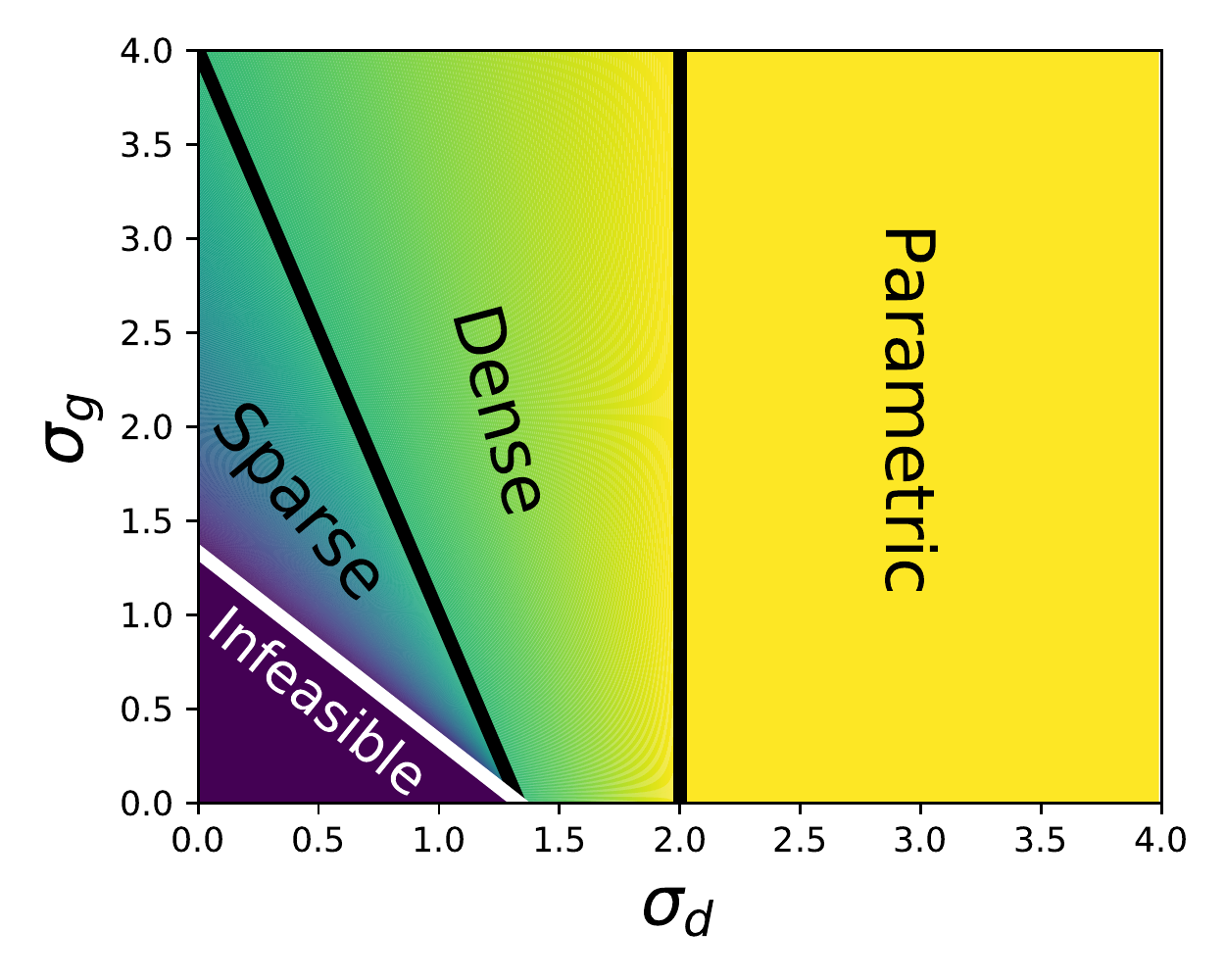}
		\vspace{-3mm}
		\caption{General Estimators}\label{fig:nonlinear_phase_diagram}
	\end{subfigure}~
	\begin{subfigure}[t]{0.66\linewidth}
		\centering
		\includegraphics[width=\linewidth]{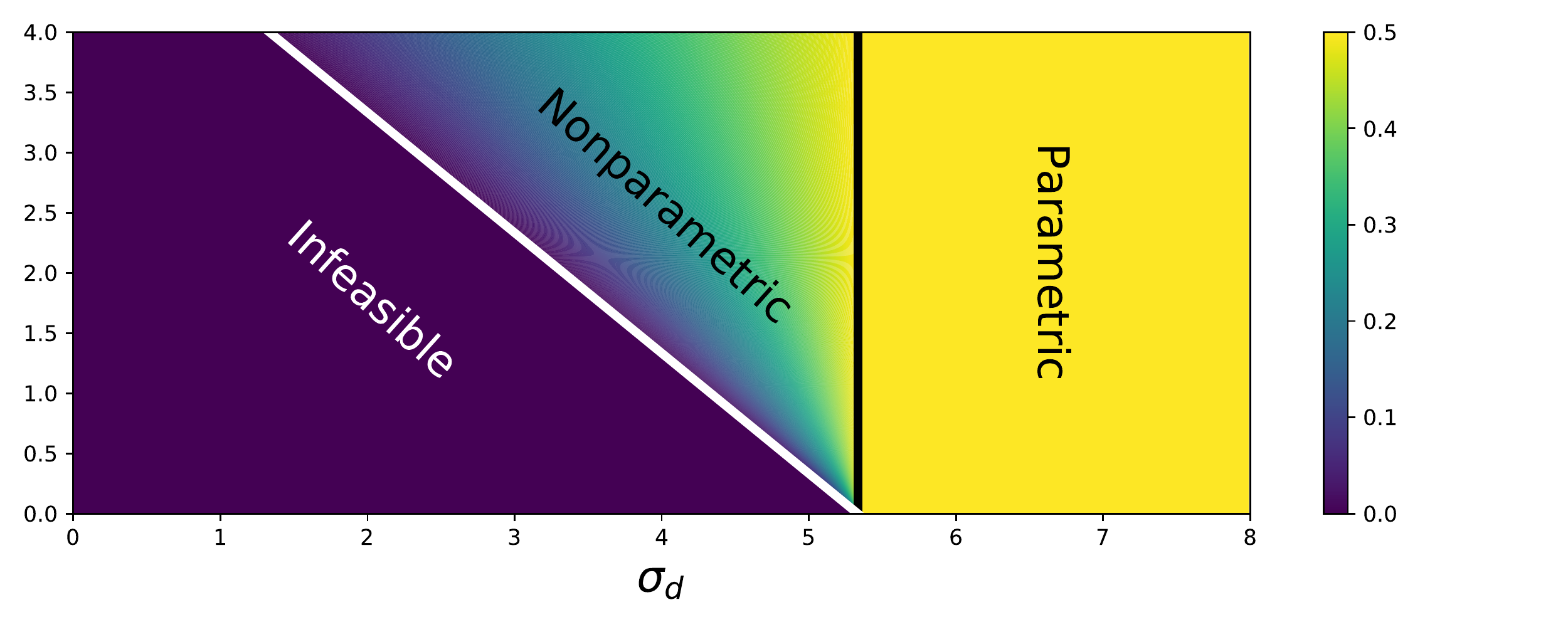}
		\vspace{-3mm}
		\caption{Linear Estimators}\label{fig:linear_phase_diagram}
	\end{subfigure}
	\caption{Minimax convergence rates as functions of discriminator smoothness $\sigma_d$ and distribution function smoothness $\sigma_g$, for (a) general and (b) linear estimators, in the case $D = 4$, $p_d = 1.2$, $p_g = 2$. Color shows exponent of minimax convergence rate (i.e., $\alpha(\sigma_d, \sigma_g)$ such that $M \left( B_{1.2,q_d}^{\sigma_d}(\R^D), B_{2,q_g}^{\sigma_g}(\R^D) \right) \asymp n^{-\alpha(\sigma_d, \sigma_g)}$), ignoring polylogarithmic factors.}\label{fig:phase_diagrams}
	\vspace{-3mm}
\end{figure}

\vspace{-2mm}
\paragraph{Role of the powers $p_d$ and $p_g$}
At one extreme ($p_d = \infty$) lie $\L^1$ or total variation loss ($\sigma_d = 0$), Wasserstein loss ($\sigma_d = 1$), and its higher-order generalizations, for which we showed the rate
\vspace{-2mm}
\[M \left( B_{\infty,q_d}^{\sigma_d}, B_{p_g,q_g}^{\sigma_g} \right) \asymp n^{-\frac{\sigma_g + \sigma_d}{2\sigma_g + D}} + n^{-1/2},\]
generalizing the rate first shown by \citet{singh2018adversarial} for Hilbert-Sobolev classes to other distribution classes, such as $\F_g = \BV$. Because discriminator functions in this class exhibit homogeneous smoothness, these losses effectively weight the sample space relatively uniformly in importance, the ``Sparse'' region in Figure~(\ref{fig:nonlinear_phase_diagram}) vanishes, and linear estimators can perform optimally.

At the other extreme ($p_d = 1$) lie $\L^\infty$ loss ($\sigma_d = 0$), Kolmogorov-Smirnov loss ($\sigma_d = 1$), and its higher-order generalizations,
for which we have shown that the rate is always
\vspace{-1mm}
\[M \left( B_{1,q_d}^{\sigma_d}, B_{p_g,q_g}^{\sigma_g} \right) \asymp n^{-\frac{\sigma_g + \sigma_d + D(1 - 1/p_d - 1/p_g)}{2\sigma_g + D(1 - 2/p_g)}} + n^{-1/2};\]
except in the parametric regime ($D \leq 2 \sigma_d$), this rate differs from that of \citet{singh2018adversarial}. Because discriminator functions can have inhomogeneous smoothness, and hence weight some portions of the sample space much more heavily than others, the ``Dense'' region in Figure~\ref{fig:nonlinear_phase_diagram} vanishes, and linear estimators are always sub-optimal.
We note that \citet{sadhanala2018higherOrderKS} recently proposed using these higher-order distances (integer $\sigma_d > 1$) in a fast two-sample test that generalizes the well-known Kolmogorov-Smirnov test, improving sensitivity to the tails of distributions; our results may provide a step towards understanding theoretical properties of this test.

\paragraph{Comparison of linear and general rates}
Letting $\sigma_g' := \sigma_g - D(1/p_g+1/p_d)$, one can write the sparse term of the linear minimax rate in the same form as the Dense rate, replacing $\sigma_g$ with $\sigma_g'$:
\vspace{-2mm}
\begin{equation}
    M_{\text{lin}} \left( B_{p_d,q_d}^{\sigma_d}, B_{p_g,q_g}^{\sigma_g} \right)
        \asymp n^{-\frac{\sigma_g'+\sigma_d}{2\sigma_g' + D}}.
    \label{rate:linear_rate_rewritten}
\end{equation}
This is not a coincidence; Morrey's inequality~\citep[Section 5.6.2]{evans2010partial} in functional analysis tells us that for general $\sigma_g > D(1/p_g+1/p_d)$, $\sigma_g' := \sigma_g - D(1/p_g+1/p_d)$ is largest possible value such that the embedding $B_{p_g,p_g}^{\sigma_g} \subseteq B_{p_d,p_d}^{\sigma_g'}$ holds. In the extreme case $p_d = \infty$ (corresponding to generalizations of total variation loss), one can interpret the rate~\eqref{rate:linear_rate_rewritten} as saying that linear estimators benefit only from homogeneous (e.g., H\"older) smoothness, and not from weaker inhomogeneous (e.g., Besov) smoothness. For general $p_d$, linear estimator can still benefit from inhomogeneous smoothness, but to a lesser extent than general minimax optimal estimators.


\paragraph{Conclusions}
We have shown, up to log factors, unified minimax convergence rates for a large class of pairs of $\F_d$-IPM losses and distribution classes $\F_g$. By doing so, we have generalized several phenomena that had observed in special cases previously.
First, under sufficiently weak loss functions, distribution estimation is possible at the parametric rate $O(n^{-1/2})$ even over very large nonparametric distribution classes.
Second, in many cases, optimal estimation requires estimators that adapt to inhomogeneous smoothness conditions; many commonly used distribution estimators fail to do this, and hence converge at sub-optimal rates, or even fail to converge.
Finally, GANs with sufficiently large fully-connected ReLU neural networks using wavelet-thresholding regularization perform statistically minimax rate-optimal distribution estimation over inhomogeneous nonparametric smoothness classes (assuming the GAN optimization problem can be solved accurately). Importantly, since GANs optimize IPM losses much weaker than traditional $\L^p$ losses, they may be able to learn reasonable approximations of even high-dimensional distributions with tractable sample complexity, perhaps explaining why they excel in the case of image data. Thus, our results suggest that the curse of dimensionality may be less severe than indicated by classical nonparametric lower bounds.

\appendix
\section{Technical Definitions and Notation}
   
    As noted in the main text, we need a multiresolution approximation (MRA) satisfying an $r$-regularity condition, defined as follows:
    \begin{definition}
        Given a non-negative integer $r$, an MRA is called \emph{$r$-regular} if the function $\phi$ can be chosen in such a way that, for every $m\in \N$ and multi-index $\alpha = (\alpha_1, \dots, \alpha_D) \in \N^D$ satisfying $|\alpha|\leq r$, for some constant $C_{\alpha,m}$,
                $|\partial^{\alpha}\phi(x)| \leq C_{\alpha,m} (1+|x|)^{-m}$.
        Here, $\partial^{\alpha} = (\partial/\partial x_1)^{\alpha_1}\cdots (\partial/\partial x_D)^{\alpha_D}$ is the mixed derivative of index $\alpha$, $|\alpha| = \sum_{j=1}^D \alpha_j$ and $|x|$ is any of the equivalent norms on a finite dimensional Euclidean space. That is, all derivatives of $\phi$ of order up to $r$ are bounded and decay at a rate faster than any polynomial.
        \label{def:regularity}
    \end{definition}
    
    While constructing an $r$-regular MRA is nontrivial, it suffices for our purpose to note that $r$-regular MRAs exist;
    the most famous example is the Daubechies wavelet~\citep{daubechies1992ten,meyer1992wavelets}.
    
    We also note the following result showing that for any function in $V_j$ (i.e., at a certain ``level'' in the MRA) its $\L^p$ norm is equivalent to the $l^p$ sequence norm of its coefficients in the wavelet basis; this helps motivate the sequence-based definition of the Besov norm.
    \begin{proposition}[\citet{meyer1992wavelets}, Section 6.10, Proposition 7]\label{lemma1}
        There exist positive constants $C, C'$ s.t. for every $1\leq p \leq \infty$, $j\in \Z$ and $\{\alpha_k\}\in l^p$, $f(x) = \sum a_k 2^{D j/2}\psi_\epsilon(2^jx-k)$, $\epsilon\in E,k\in \Z^D$,
        \vspace{-2mm}
         \[
            C\norm{f}_p \leq 2^{D j(1/2-1/p)}
                \left(
                    \sum |a_k|^p
                    \right)^{1/p}
                \leq C'\norm{f}_p.
         \]
        \label{norm_prop}
    \end{proposition}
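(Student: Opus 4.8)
The plan is to first collapse the scale parameter $j$ to $j = 0$ by a change of variables, and then establish the resulting single-scale equivalence by exploiting two features of the wavelets: their compact support (so translates overlap boundedly) and their orthonormality (to recover the coefficients $a_k$ from $f$). Writing $g(x) := \sum_{k} a_k \psi_\epsilon(x - k)$ so that $f(x) = 2^{Dj/2} g(2^j x)$, the substitution $y = 2^j x$ gives $\|f\|_p = 2^{Dj(1/2 - 1/p)} \|g\|_p$ for every $p \in [1,\infty]$ (with $1/\infty := 0$). Dividing through, the claimed bound is equivalent to $C\|g\|_p \le \|\{a_k\}\|_{l^p} \le C'\|g\|_p$ with constants independent of $p$, $j$, and $\{a_k\}$. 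It suffices to prove this for finitely supported $\{a_k\}$; the general case follows by a routine truncation/limiting argument (the upper bound below shows the partial sums of $g$ are Cauchy in $L^p$ when $p < \infty$, and the series is locally finite when $p = \infty$, while $\|\cdot\|_{l^p}$ of the truncations increases to $\|\{a_k\}\|_{l^p}$ by monotone convergence).

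For the upper bound $\|g\|_p \lesssim \|\{a_k\}\|_{l^p}$, use that $\psi_\epsilon$ is supported in $[-A,A]^D$: for each fixed $x$, at most $N := (2A+1)^D$ of the translates $\psi_\epsilon(\cdot - k)$ are nonzero at $x$. Hence, for $p < \infty$, Hölder's inequality (applied to the at most $N$ nonzero summands) gives $|g(x)|^p \le N^{p-1} \sum_{k} |a_k|^p \, |\psi_\epsilon(x - k)|^p$; integrating term by term and using $\int_{\R^D} |\psi_\epsilon(x - k)|^p \, dx = \|\psi_\epsilon\|_p^p$ yields $\|g\|_p^p \le N^{p-1} \|\psi_\epsilon\|_p^p \, \|\{a_k\}\|_{l^p}^p$. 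The case $p = \infty$ is the same but easier, giving $\|g\|_\infty \le N \|\psi_\epsilon\|_\infty \|\{a_k\}\|_{l^\infty}$.

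For the lower bound $\|\{a_k\}\|_{l^p} \lesssim \|g\|_p$, orthonormality of $\{\psi_\epsilon(\cdot - k)\}_{k \in \Z^D}$ gives the coefficient identity $a_k = \int_{\R^D} g(x)\, \psi_\epsilon(x - k) \, dx$, and since the integrand is supported in the ball $B(k,A)$, Hölder's inequality gives $|a_k| \le \|\psi_\epsilon\|_{p'} \bigl( \int_{B(k,A)} |g|^p \bigr)^{1/p}$ for $p < \infty$ (and $|a_k| \le \|\psi_\epsilon\|_1 \|g\|_\infty$ when $p = \infty$, $|a_k| \le \|\psi_\epsilon\|_\infty \int_{B(k,A)} |g|$ when $p = 1$). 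Summing the $p$-th powers over $k$ and using that each $x$ lies in $B(k,A)$ for at most $N' = O(A^D)$ values of $k$ gives $\sum_k |a_k|^p \le N' \|\psi_\epsilon\|_{p'}^p \|g\|_p^p$, i.e.\ $\|\{a_k\}\|_{l^p} \lesssim \|g\|_p$, with the endpoint cases handled identically.

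I expect the lower bound to be the main obstacle, and within it the book-keeping needed to make the argument fully rigorous for all $p$: when $p > 2$ one has $l^p \not\subseteq l^2$ on the infinite index set, so a priori $g$ need not lie in $L^2$, and the coefficient identity $a_k = \langle g, \psi_\epsilon(\cdot - k)\rangle$ must be obtained by first treating finitely supported $\{a_k\}$ and then passing to the limit using the $L^p$ bound just proven. Beyond this I anticipate no genuine difficulty, precisely because we have assumed $\psi_\epsilon$ compactly supported; for merely $r$-regular (rapidly decaying) wavelets one would replace the crude bounded-overlap counts above by an almost-diagonal / summable-decay estimate, which is the route taken in \citep{meyer1992wavelets}.
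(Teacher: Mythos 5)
Your proposal is correct, and the paper itself offers no proof — it simply cites the result from Meyer (Section 6.10, Proposition 7), so there is no in-paper argument to compare against. Your route is the natural, elementary one made available by the paper's standing assumption that the wavelets are compactly supported on $[-A,A]^D$: the scaling reduction to $j=0$, the bounded-overlap-plus-Jensen estimate for the upper bound, and orthonormality-plus-H\"older for the lower bound are all sound. You are also right that Meyer's own proof is set up for merely rapidly decaying ($r$-regular) wavelets and hence uses a summable-decay/almost-diagonal argument rather than a literal bounded-overlap count; your version trades that generality for simplicity, which is fine here.

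Two small refinements worth making explicit, since the proposition asserts constants uniform over all $p\in[1,\infty]$ (and $j\in\Z$): your bounds produce factors $N^{(p-1)/p}\norm{\psi_\epsilon}_p$ and $N'^{1/p}\norm{\psi_\epsilon}_{p'}$, and you should observe that these are bounded uniformly in $p$ — because $N^{(p-1)/p}\le N$, $N'^{1/p}\le N'$, and for a bounded compactly supported $\psi_\epsilon$ one has $\norm{\psi_\epsilon}_p\le\max(1,\lvert\mathrm{supp}\,\psi_\epsilon\rvert)\,\norm{\psi_\epsilon}_\infty$ for every $p$. Also, the support of $\psi_\epsilon(\cdot-k)$ is the cube $k+[-A,A]^D$ rather than a Euclidean ball; this is cosmetic but affects the exact overlap count. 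Your handling of the $p>2$ subtlety (defining $g$ as an $L^p$-limit of partial sums and recovering $a_k$ by the $L^p$--$L^{p'}$ pairing against $\psi_\epsilon(\cdot-k)$) is the right fix and closes the only real gap an unwary reader might leave open.
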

    Appendix A.1 of \citet{donoho1996density} offers a more extended background of Besov spaces, including how the sequence-based definition corresponds to more conventional smoothness measures (moduli of continuity), as well as some direct connections between Besov spaces and minimax theory for linear estimators.
    
\section{ Upper Bound - Linear Case}
        For any density function $p$ let 
            \begin{align*}
                \alpha_{ k}^p &= \int \phi_k(x)p(x)dx\\
                \beta_{\lambda}^p &= \int \psi_{\lambda}(x)p(x)dx
            \end{align*}
        
        We first show that Besov IPMs essentially measure the distance in co-efficient space between compactly supported densities.
        
        \begin{lemma}
            \label{lemma:coeff}
            For any compactly supported probability densities $p$, $q\in \L_{p_d'}$ where $\F_d = B^{\sigma_d}_{p_d,q_d}$
            \[
                d_{\F_d}(p,q) =
                    \sup_{f\in \F_d}\left| 
					\sum_{k\in \Z}
					\alpha^f_{ k}
					\left(\alpha_{ k}^p- \alpha_{ k}^q\right) + 
					\sum_{j\geq 0}\sum_{\lambda\in \Lambda} 
					\beta^f_{\lambda}
					\left( \beta_{\lambda}^p- \beta_{\lambda}^q\right)\right|
            \]
            where for $f\in \F_d$
                \[
			    f = \sum_{k\in \Z}\alpha^f_{k}\phi_{k} + 
					\sum_{j\geq 0}\sum_{\lambda\in \Lambda_j} \beta^f_{\lambda}\psi_{\lambda}
			    \]
        \end{lemma}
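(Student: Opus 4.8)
The plan is to unfold the definition of the $\F_d$-IPM and recognize the resulting integral as a bilinear pairing of wavelet coefficient sequences. Since every $f \in \F_d$ is bounded and measurable and $p - q$ is an integrable, compactly supported function, the difference of expectations equals $\int_{\R^D} f(x)\bigl(p(x) - q(x)\bigr)\,dx$, so that $d_{\F_d}(p,q) = \sup_{f \in \F_d}\bigl|\int_{\R^D} f(x)(p(x)-q(x))\,dx\bigr|$. It therefore suffices to prove, for each fixed $f \in \F_d$ with inhomogeneous wavelet expansion $f = \sum_{k \in \Z}\alpha_k^f \phi_k + \sum_{j \geq 0}\sum_{\lambda \in \Lambda_j}\beta_\lambda^f \psi_\lambda$, the identity
\[
    \int_{\R^D} f(x)(p(x)-q(x))\,dx = \sum_{k \in \Z}\alpha_k^f\bigl(\alpha_k^p - \alpha_k^q\bigr) + \sum_{j \geq 0}\sum_{\lambda \in \Lambda_j}\beta_\lambda^f\bigl(\beta_\lambda^p - \beta_\lambda^q\bigr);
\]
taking $\sup_{f \in \F_d}$ of the absolute value of both sides then yields the lemma.

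To prove the identity, I would integrate the wavelet series of $f$ against $p - q$ term by term, using $\int \phi_k(x)(p-q)(x)\,dx = \alpha_k^p - \alpha_k^q$ and $\int \psi_\lambda(x)(p-q)(x)\,dx = \beta_\lambda^p - \beta_\lambda^q$. Let $f_J := \sum_{k}\alpha_k^f\phi_k + \sum_{j=0}^J\sum_{\lambda \in \Lambda_j}\beta_\lambda^f\psi_\lambda$ denote the partial sums. Because $p$ and $q$ are compactly supported and the wavelets are supported in $[-A,A]$, only finitely many $k$ have $\alpha_k^p - \alpha_k^q \neq 0$ and, at each scale $j$, only finitely many (namely $O(2^{jD})$) indices $\lambda \in \Lambda_j$ have $\beta_\lambda^p - \beta_\lambda^q \neq 0$; hence, by linearity of the integral over these finite index sets, $\int f_J(p-q) = \sum_k\alpha_k^f(\alpha_k^p - \alpha_k^q) + \sum_{j=0}^J\sum_{\lambda \in \Lambda_j}\beta_\lambda^f(\beta_\lambda^p - \beta_\lambda^q)$. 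The identity then follows once we let $J \to \infty$ and justify exchanging this limit with the integral.

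The main obstacle is exactly this exchange. I would handle it by proving that the double coefficient series converges \emph{absolutely}. Applying Hölder's inequality first within each scale $j$ (pairing the $l^{p_d}$ norm of $\{\beta_\lambda^f\}_{\lambda \in \Lambda_j}$ with the $l^{p_d'}$ norm of $\{\beta_\lambda^p - \beta_\lambda^q\}_{\lambda \in \Lambda_j}$) and then across scales (pairing $l^{q_d}$ with $l^{q_d'}$), while inserting the weights $2^{j(\sigma_d + D(1/2 - 1/p_d))}$ appearing in $\norm{f}_{B^{\sigma_d}_{p_d,q_d}}$, one reduces the problem to bounding a weighted $l^{q_d'}$ norm of the sequence $\bigl\{2^{-j(\sigma_d + D(1/2 - 1/p_d))}\norm{\{\beta_\lambda^p - \beta_\lambda^q\}_{\lambda\in\Lambda_j}}_{l^{p_d'}}\bigr\}_j$. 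Since $p - q \in \L^{p_d'}(\R^D)$, Proposition~\ref{norm_prop}, applied to the scale-$j$ detail projection of $p - q$, gives $\norm{\{\beta_\lambda^p - \beta_\lambda^q\}_{\lambda \in \Lambda_j}}_{l^{p_d'}} \lesssim 2^{jD(1/p_d' - 1/2)}\norm{p - q}_{p_d'}$; combined with the identity $1/p_d + 1/p_d' = 1$, the scale weights cancel up to a factor $2^{-j\sigma_d}$, which lies in $l^{q_d'}$ whenever $\sigma_d > 0$. Thus the double series converges absolutely, dominated convergence justifies $\int f_J(p - q) \to \int f(p-q)$, and the identity follows; in borderline cases one can instead argue that $f_J \to f$ in $\L^{p_d}$ on the compact support of $p - q$ and pass to the limit via Hölder's inequality directly. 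Taking the supremum over $f \in \F_d$ completes the proof.
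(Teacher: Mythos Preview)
Your proposal is correct and follows the same skeleton as the paper's proof: write $d_{\F_d}(p,q)=\sup_{f}\bigl|\int f(p-q)\bigr|$, substitute the wavelet expansion of $f$, and justify exchanging the series with the integral, using the compact support of $p-q$ to reduce to finitely many indices at each scale.

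The only substantive difference is in how the exchange is justified. The paper argues in one line: it asserts that the wavelet series of $f$ converges in $\L^\infty$, and since $p,q$ are compactly supported one can restrict $f$ to a compact set, so $\L^\infty$ convergence gives $\L^1$ convergence and the integral passes to the limit. Your route is more quantitative: you prove absolute convergence of the coefficient double series by pairing the Besov norm of $f$ (via H\"older in $l^{p_d}\times l^{p_d'}$ at each scale and $l^{q_d}\times l^{q_d'}$ across scales) with the bound from Proposition~\ref{norm_prop} on the scale-$j$ coefficients of $p-q\in\L^{p_d'}$, yielding a geometric factor $2^{-j\sigma_d}$. This buys you an explicit rate of decay and avoids invoking uniform convergence of the wavelet expansion, at the cost of needing $\sigma_d>0$ (which you correctly flag and patch via $\L^{p_d}$ convergence on the compact support). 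The paper's argument is shorter but implicitly uses a Besov$\hookrightarrow\L^\infty$ type embedding that it does not state; your argument makes the hypotheses doing the work more visible.
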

        \begin{proof}
            We notice that the convergence to $f$ above is in the $\L_\infty$ norm. So for probability measures $P, Q$ we have, 
			\begin{align*}
				d_{\F_d}(p,q) 
					&= \sup_{f\in \F_d}|\mathrm{E}_{X\sim p}[f(X)]-\mathrm{E}_{X\sim q}[f(X)]|\\
					&= \sup_{f\in \F_d}\left|\int_{\mathcal{X}} f(x)p(x)dx- f(x)q(x)dx\right|\\
					&= \sup_{f\in \F_d}\left|\int_{\mathcal{X}}
					\left(
					\sum_{k\in \Z}\alpha^f_{k}\phi_{k}(x) + 
					\sum_{j\geq 0}\sum_{\lambda\in \Lambda_j} \beta^f_{\lambda}\psi_{\lambda}(x)
					\right)
					\left( p(x)- q(x)\right)dx\right|
			\end{align*}
			If $p, q$ are compactly supported on $[-B,B]$ then we can assume WLOG that $f$ is compactly supported on $[-B,B]$ so convergence of $f_n$ to $f$ in $\L^\infty$ norm implies convergence in $\L^1$ norm.
			Therefore, 
			\begin{align*}
				d_{\F_d}(P,Q) 	
					&= \sup_{f\in \F_d}\left| 
					\sum_{k\in \Z}
					\int_{\mathcal{X}}\alpha^f_{ k}\phi_{k}
					\left( dP(x)- dQ(x)\right) + 
					\sum_{j\geq 0}\sum_{\lambda\in \Lambda_j} 
					\int_{\mathcal{X}}
					\beta^f_{\lambda}\psi_{\lambda}
					\left( dP(x)- dQ(x)\right)\right|\\
					&= \sup_{f\in \F_d}\left| 
					\sum_{k\in \Z}
					\alpha^f_{ k}
					\left(\alpha_{ k}^p- \alpha_{ k}^q\right) + 
					\sum_{j\geq 0}\sum_{\lambda\in \Lambda} 
					\beta^f_{\lambda}
					\left( \beta_{\lambda}^p- \beta_{\lambda}^q\right)\right|
			\end{align*}
        \end{proof}
        
    We will need the following inequalities to estimate the error of the wavelet estimator under the IPM loss. 
    
    The first lemma is the standard upper bound on the $m$th moment of a sum of IID random variables with bounded variance. The second is a standard concentration inequality used to bound large deviations in our error estimate. 
    
    \begin{lemma}({\bf Rosenthal's Inequality} (\cite{rosenthal1970subspaces}))
        Let $m\in \R$ and $Y_1,\dots, Y_n$ be IID random variables with $\E[Y_i]=0$, $\E[Y_i^2]\leq \sigma^2$. Then there is a constant $c_m$ that depends only on $m$ s.t. 
	        \begin{align*}
	            \E\left[
	                \left|\frac{1}{n}\sum_{i=1}^n Y_i
	                \right|^m
	               \right] 
	               & \leq c_m
	                    \left(
	                        \frac{\sigma^m}{n^{m/2}} + \frac{\E|Y_1|^m}{n^{m-1}}
	                    \right)  &\text{for }2<m<\infty,\\
                \E\left[
	                \left|\frac{1}{n}\sum_{i=1}^n Y_i
	                \right|^m
	               \right] 
	                &\leq \sigma^m
	                    n^{-m/2}  &\text{for } 1\leq m\leq 2.
	        \end{align*}
    
    \end{lemma}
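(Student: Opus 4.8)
The statement is Rosenthal's moment inequality, a classical fact; below I sketch the proof one would give (the hard case can also simply be quoted from \citet{rosenthal1970subspaces}). The plan is to treat the two ranges of $m$ separately, since the regime $1 \le m \le 2$ is elementary whereas $2 < m < \infty$ is the substantive case.

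\emph{The regime $1 \le m \le 2$.} Write $S_n = \frac1n\sum_{i=1}^n Y_i$. The map $t \mapsto t^{m/2}$ is concave on $[0,\infty)$, so Jensen's inequality applied to the nonnegative variable $S_n^2$ gives $\E[|S_n|^m] = \E[(S_n^2)^{m/2}] \le (\E[S_n^2])^{m/2}$. Since the $Y_i$ are independent and centered, $\E[S_n^2] = \frac{1}{n^2}\sum_{i=1}^n \E[Y_i^2] \le \sigma^2/n$, and hence $\E[|S_n|^m] \le (\sigma^2/n)^{m/2} = \sigma^m n^{-m/2}$, which is the claimed bound (with constant $1$).

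\emph{The regime $2 < m < \infty$.} Here I would run the standard symmetrization--Khintchine argument. Introduce independent Rademacher signs $\varepsilon_1,\dots,\varepsilon_n$, independent of $(Y_i)_i$; a routine symmetrization bound gives $\E\big|\sum_i Y_i\big|^m \le 2^m\,\E\big|\sum_i \varepsilon_i Y_i\big|^m$. Conditioning on $(Y_i)_i$ and applying Khintchine's inequality to the resulting Rademacher sum yields a constant $B_m$ with $\E_\varepsilon\big|\sum_i \varepsilon_i Y_i\big|^m \le B_m\big(\sum_i Y_i^2\big)^{m/2}$, whence
\[
  \E\Big|\sum_{i=1}^n Y_i\Big|^m \;\le\; 2^m B_m\,\E\Big[\Big(\sum_{i=1}^n Y_i^2\Big)^{m/2}\Big].
\]
It remains to bound the $(m/2)$-th moment of $\sum_i Y_i^2$. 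Writing $\sum_i Y_i^2 = n\,\E[Y_1^2] + \sum_i(Y_i^2 - \E[Y_i^2])$ and using convexity of $t\mapsto t^{m/2}$ (valid since $m/2 \ge 1$), the deterministic term contributes $(n\sigma^2)^{m/2}$; the centered sum $\sum_i(Y_i^2-\E[Y_i^2])$ has iid mean-zero summands and so is itself of the form we started with, and can be controlled by applying the lemma recursively at the smaller exponent $m/2$ --- iterating this doubling argument down to the already-established range $1\le\cdot\le 2$, and invoking Lyapunov's moment inequality to re-express the higher even moments of $Y_1$ that appear along the way in terms of $\sigma^2$ and $\E|Y_1|^m$. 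Tracking the accumulated constants from symmetrization, Khintchine, and the recursion produces a constant $c_m$ depending only on $m$; dividing through by $n^m$ and collecting terms gives $\E[|S_n|^m] \le c_m\big(\sigma^m n^{-m/2} + \E|Y_1|^m\, n^{-(m-1)}\big)$.

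\emph{Main obstacle.} The delicate part is the final step: bounding $\E[(\sum_i Y_i^2)^{m/2}]$ so that it splits cleanly into a ``variance'' term of order $n^{m/2}\sigma^m$ and an ``$m$-th moment'' term of order $n\,\E|Y_1|^m$, while keeping $c_m$ under control. The recursion forces one to handle intermediate moments such as $\E|Y_1^2 - \E Y_1^2|^{m/2}$, and bounding these back in terms of only $\sigma$ and $\E|Y_1|^m$ via Lyapunov / log-convexity of moments is where the bookkeeping becomes nontrivial; symmetrization, Khintchine, and the $1\le m\le 2$ case are all routine.
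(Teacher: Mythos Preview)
The paper does not prove this lemma at all: it is stated with attribution to \citet{rosenthal1970subspaces} and used as a black box in the subsequent moment-bound lemma. So there is no ``paper's own proof'' to compare against; you are supplying a proof where the authors simply cite the literature.

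That said, your sketch is essentially a correct and standard route. The $1\le m\le 2$ case is exactly right (Jensen on $t\mapsto t^{m/2}$). For $m>2$, the symmetrization--Khintchine reduction to $\E\big[(\sum_i Y_i^2)^{m/2}\big]$ is the usual opening move, and your doubling recursion is one of the known ways to finish. You correctly flag the real work: after one recursion step you must control quantities like $\operatorname{Var}(Y_1^2)\le\E Y_1^4$ and $\E|Y_1^2-\E Y_1^2|^{m/2}$ in terms of only $\sigma^2$ and $\E|Y_1|^m$, which is done via log-convexity of moments (Lyapunov), and the intermediate $n^{m/4}$-type terms must be shown to be dominated by $n^{m/2}\sigma^m + n\,\E|Y_1|^m$ (e.g.\ by Young's inequality). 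This all goes through, but as written your sketch stops short of actually carrying out even one step of the recursion, so a reader would have to take on faith that the cross terms collapse as claimed. If you want a cleaner closure for $m>2$, an alternative is to bound $\E\big[(\sum_i Y_i^2)^{m/2}\big]$ directly by splitting according to whether $\max_i Y_i^2$ exceeds $\sum_i \E Y_i^2$, which avoids the recursion entirely; or simply cite Rosenthal as the paper does.
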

    \begin{lemma} ({\bf Bernstein's Inequality} (\cite{bernstein1964modification}))
                If $Y_1, \dots, Y_n$ are IID random variables such that $\E[Y_i]=0$, $\E[Y_i^2]=\sigma^2$ and $|Y_i|\leq \norm{Y}_\infty<\infty$, then 
                    \[
                        \Pr\left(\left|
                        \frac{1}{n}\sum_{i=1}^n Y_i\right|>\lambda\right)
                            \leq 2\exp\left( -\frac{n\lambda^2}{2(\sigma^2+\norm{Y}_\infty\lambda/3)}
                            \right)
                    \]
        where $\norm{Y}_\infty = \ess \sup Y$.
    \end{lemma}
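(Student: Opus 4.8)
The plan is to prove this by the classical Chernoff (exponential-moment) method, which already yields the stated constants without appeal to anything else in the paper. First I would reduce the two-sided statement to a one-sided one: by a union bound, $\Pr\big(\big|\tfrac1n\sum_i Y_i\big|>\lambda\big)\le\Pr\big(\tfrac1n\sum_i Y_i>\lambda\big)+\Pr\big(\tfrac1n\sum_i(-Y_i)>\lambda\big)$, and since the hypotheses ($\E[Y_i]=0$, $\E[Y_i^2]=\sigma^2$, $|Y_i|\le\norm{Y}_\infty$) are invariant under $Y_i\mapsto-Y_i$, it suffices to show $\Pr\big(\sum_{i=1}^n Y_i>n\lambda\big)\le\exp\big(-\tfrac{n\lambda^2}{2(\sigma^2+\norm{Y}_\infty\lambda/3)}\big)$ and then double.

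For the one-sided bound, for any $t>0$ Markov's inequality applied to $e^{t\sum_i Y_i}$, together with independence, gives $\Pr\big(\sum_i Y_i>n\lambda\big)\le e^{-tn\lambda}\big(\E e^{tY_1}\big)^n$, so the crux is a good bound on the moment generating function. Writing $M:=\norm{Y}_\infty$ and using $\E[Y_1]=0$, the bound $|\E[Y_1^k]|\le M^{k-2}\E[Y_1^2]=M^{k-2}\sigma^2$ for $k\ge2$, and $k!\ge2\cdot3^{k-2}$, the key estimate will be
\[
    \E e^{tY_1}=1+\sum_{k\ge2}\frac{t^k\E[Y_1^k]}{k!}\le1+\frac{\sigma^2t^2}{2}\sum_{\ell\ge0}\Big(\frac{tM}{3}\Big)^{\ell}=1+\frac{\sigma^2t^2/2}{1-tM/3},
\]
valid for $0<t<3/M$; applying $1+x\le e^x$ then gives $\E e^{tY_1}\le\exp\big(\tfrac{\sigma^2t^2/2}{1-tM/3}\big)$, hence $\Pr\big(\sum_i Y_i>n\lambda\big)\le\exp\big(-tn\lambda+\tfrac{n\sigma^2t^2/2}{1-tM/3}\big)$. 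I would finish by choosing $t=\lambda/(\sigma^2+M\lambda/3)$, which lies in $(0,3/M)$; a short simplification ($1-tM/3=\sigma^2/(\sigma^2+M\lambda/3)$) shows the exponent collapses to exactly $-\tfrac{n\lambda^2}{2(\sigma^2+M\lambda/3)}$, and doubling gives the claim.

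Every step above is routine; the only places that need care are the elementary series bound $\sum_{k\ge2}x^k/k!\le\tfrac{x^2/2}{1-x/3}$ — which is exactly what pins down the constant $3$ in the denominator, and so must be carried out with the sharp inequality $k!\ge2\cdot3^{k-2}$ rather than a cruder one — and the verification that the explicit (near-optimal, not exactly optimal) choice of $t$ makes the two-term exponent telescope into the claimed single fraction. I expect this moment-generating-function bookkeeping to be the main obstacle, since obtaining a clean closed form with the correct constants is the one genuinely non-mechanical part; the rest is algebra. (Of course the statement may simply be cited from \cite{bernstein1964modification}; the sketch above is the standard self-contained derivation, using nothing beyond Markov's inequality and the stated moment conditions.)
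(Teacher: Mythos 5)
The paper does not prove this lemma; it simply cites \cite{bernstein1964modification} and uses the inequality as a black box. Your derivation is the standard Chernoff/MGF proof of Bernstein's inequality, and it is correct: the moment bound $|\E[Y_1^k]|\le M^{k-2}\sigma^2$ for $k\ge 2$, the factorial bound $k!\ge 2\cdot 3^{k-2}$, the geometric-series summation yielding $\E e^{tY_1}\le\exp\bigl(\tfrac{\sigma^2 t^2/2}{1-tM/3}\bigr)$ for $0<t<3/M$, and the closed-form choice $t=\lambda/(\sigma^2+M\lambda/3)$ all check out, and the exponent does telescope to $-\tfrac{n\lambda^2}{2(\sigma^2+M\lambda/3)}$ as claimed. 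Since the paper offers no proof to compare against, the only remark worth making is that your argument is a complete, self-contained verification of the cited constant; nothing is missing.
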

        
        Given discriminator and generator classes as
        \begin{align*}
            \F_d &= \{f : \norm{f}^{\sigma_d}_{p_d, q_d}\leq L_d\}\\
            \F_g &= \{p : \norm{p}^{\sigma_g}_{p_g, q_g}\leq L_g\}\cap \P\\
            \P &= \{p:p\geq 0, \norm{p}_{\L^1}=1, \text{supp}(p)\subseteq [-T,T]\},
        \end{align*}
        we decompose $f \in \F_d$ as
    		\[
			    f = \sum_{k\in \Z}\alpha_{k}\phi_{k} + 
					\sum_{j\geq 0}\sum_{\lambda\in \Lambda_j} \beta_{\lambda}\psi_{\lambda}.
			\]
	    We use the linear wavelet estimator to demonstrate the upper bound. 
		Let $X_1, \dots, X_n$ be IID with density $ p\in \F_g$ and consider the wavelet estimator of $p$ i.e. 
				\begin{align*}
				    p
					&=   \sum_{ k\in \Z} \alpha^p_{k} \phi_{k}+
					    \sum_{j\geq 0} \sum_{\lambda\in \Lambda_j} \beta^p_{\lambda}\psi_{\lambda}\\
					\hat{p}_n 
					&=   \sum_{ k\in \Z} \hat{\alpha}_{k} \phi_{k}+
					    \sum_{j=0}^{j_0} \sum_{\lambda\in \Lambda_j} \hat{\beta}_{\lambda}\psi_{\lambda}
				\end{align*}
			where 
			\[
			    \begin{aligned}[c]
			        \alpha^p_{k} &=  \E_{X\sim p}[\phi_{k}(X)]\\
			        \beta^p_{\lambda} &= \E_{X\sim p}[\psi_{\lambda}(X)]
			    \end{aligned}\hspace{3ex}\qquad
			    \begin{aligned}[c]
			        \hat{\alpha}_{k} &= \frac{1}{n}\sum_{i=1}^n \phi_{k}(X_i)\\
			        \hat{\beta}_{\lambda} &= \frac{1}{n}\sum_{i=1}^n \psi_{\lambda}(X_i)
			    \end{aligned}
			 \]
			 
	    Then applying lemma \ref{lemma:coeff}, we bound
			\begin{align*}
				d_{\F_d}(p,\hat{p}_n) 
					\leq \hspace{5ex}
						&\sup_{f\in \F_d}\sum_{k\in \Z}
						\alpha_{k}
						\left(\alpha_{ k}^p- \hat{\alpha}_{k}\right) 
						&+&
						\sup_{f\in \F_d}
						\sum_{j= 0}^{j_0}\sum_{\lambda\in \Lambda_j} 
						\beta_{\lambda}
						\left( \beta_{\lambda}^p- \hat{\beta}_{\lambda}\right)\\
						&
						&+& 
						\sup_{f\in \F_d}
						\sum_{j\geq j_1}\sum_{\lambda\in \Lambda_j} 
						\beta_{\lambda}
						 \beta_{\lambda}^p
			\end{align*}
		where the first two terms constitute the stochastic error and the last term is the bias. We bound these separately below. 
		We first prove a few lemmas that will be used repeatedly to upper bound the different terms. 
            
        \begin{lemma}
		    \label{lemma:discriminator}
		    Let $n_1, n_2\in \N \cup \{\infty\}$ and $\eta$ be any sequence of numbers. Then
		    
		    \[
		        \E_{X_1,\dots, X_n} \sup_{f\in \F_d}\sum_{j=n_1}^{n_2}\sum_{\lambda\in \Lambda_j} \gamma_{\lambda}\eta_{\lambda}
		        \leq 
		            L_D
			    \sum_{j=n_1}^{n_2}2^{-j(\sigma_d+D/2-D/p_d)}
			    \left(\E_{X_1,\dots, X_n}
			    \sum_{\lambda\in \Lambda_j}|\eta_{\lambda}|^{p_d'}
			    \right)^{1/p_d'}
		    \]
		    Note that if the above is true also if $\gamma = \alpha^f$ and $n_1=n_2=0$.
		\end{lemma}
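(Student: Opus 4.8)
The plan is to reduce the whole statement to two classical inequalities, H\"older for sequences and Jensen for the concave map $t \mapsto t^{1/p_d'}$, once one extracts a per-level bound on the wavelet coefficients of the discriminator from the definition of the Besov norm. Here $\gamma_\lambda$ is the mother-wavelet coefficient $\beta^f_\lambda$ of $f$, so that $f = \sum_{k\in\Z}\alpha^f_k\phi_k + \sum_{j\geq 0}\sum_{\lambda\in\Lambda_j}\beta^f_\lambda\psi_\lambda$.

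First I would observe that the constraint $\norm{f}_{B^{\sigma_d}_{p_d,q_d}} \leq L_d$ forces, for every level $j$,
\[
    2^{j(\sigma_d + D(1/2 - 1/p_d))} \Big( \sum_{\lambda \in \Lambda_j} |\beta^f_\lambda|^{p_d} \Big)^{1/p_d} \leq \norm{f}_{B^{\sigma_d}_{p_d,q_d}} \leq L_d,
\]
since the $l^{q_d}$ norm of the sequence of level norms dominates its $l^\infty$ norm; equivalently, $\norm{\{\beta^f_\lambda\}_{\lambda\in\Lambda_j}}_{l^{p_d}} \leq L_d\, 2^{-j(\sigma_d + D/2 - D/p_d)}$. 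Next, for each fixed $j$, H\"older's inequality on $\Lambda_j$ with conjugate exponents $p_d,p_d'$ gives, uniformly over $f\in\F_d$ and pointwise in the sample,
\[
    \sum_{\lambda \in \Lambda_j} \beta^f_\lambda \eta_\lambda \leq \norm{\{\beta^f_\lambda\}_{\lambda\in\Lambda_j}}_{l^{p_d}}\, \norm{\{\eta_\lambda\}_{\lambda\in\Lambda_j}}_{l^{p_d'}} \leq L_d\, 2^{-j(\sigma_d + D/2 - D/p_d)} \Big( \sum_{\lambda \in \Lambda_j} |\eta_\lambda|^{p_d'} \Big)^{1/p_d'}.
\]
Summing over $j \in \{n_1,\dots,n_2\}$ and using that the supremum of a sum is at most the sum of the suprema yields the desired bound before taking expectations.

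Finally I would take $\E_{X_{1:n}}$ of both sides: linearity of expectation passes it inside the sum over $j$ (and past the deterministic factors $2^{-j(\sigma_d+D/2-D/p_d)}$), and since $\eta$ may depend on the data I apply Jensen's inequality to the concave function $t\mapsto t^{1/p_d'}$ (valid since $p_d' \geq 1$) to obtain $\E\big(\sum_{\lambda\in\Lambda_j}|\eta_\lambda|^{p_d'}\big)^{1/p_d'} \leq \big(\E\sum_{\lambda\in\Lambda_j}|\eta_\lambda|^{p_d'}\big)^{1/p_d'}$, which is exactly the claimed inequality (reading $L_D$ as $L_d$). The closing remark about $\gamma = \alpha^f$ and $n_1 = n_2 = 0$ follows by the identical argument, using instead $\norm{\{\alpha^f_k\}_k}_{l^{p_d}} \leq L_d$ from the first term of the Besov norm and noting $2^{-0\cdot(\sigma_d+D/2-D/p_d)} = 1$.

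There is no serious obstacle here; the only points requiring care are (i) correctly reading off the per-level coefficient bound from the Besov-norm definition — in particular that the outer $l^{q_d}$ norm controls each level uniformly in $q_d$ — and (ii) the order of the two expectation manipulations (linearity first, then Jensen level by level) so that the potential randomness of $\eta$ is handled cleanly.
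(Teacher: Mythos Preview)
Your proposal is correct and follows essentially the same approach as the paper: H\"older at each level, a per-level coefficient bound from the Besov norm, and Jensen to pull the expectation inside the $1/p_d'$ power. The only cosmetic difference is that the paper phrases the cross-level step as a second H\"older inequality with exponents $q_d,q_d'$ followed by the embedding $l^1\subseteq l^{q_d'}$, whereas you invoke directly that $\norm{\cdot}_{l^\infty}\le\norm{\cdot}_{l^{q_d}}$ to read off the per-level bound; these are equivalent.
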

		\begin{proof}
        Since $f\in \F_d$, applying H\"older's inequality twice we get,
			\begin{align*}
			    \E_{X_1,\dots, X_n} \sup_{f\in \F_d}\sum_{j=n_1}^{n_2}\sum_{\lambda\in \Lambda_j} \gamma_{\lambda}\eta_{\lambda}
			    &\leq  
			        \E_{X_1,\dots, X_n} \sup_{f\in \F_d}\sum_{j=n_1}^{n_2}
			        \norm{\gamma}_{p_d}\norm{\eta}_{p_d'}\\
			    &\leq  
			        \E_{X_1,\dots, X_n} 
			        \sup_{f\in \F_d}
			        \left(\sum_{j=n_1}^{n_2}
			        \left(
    			    2^{j(\sigma_d+D/2-D/p_d)}\norm{\gamma}_{p_d}
    			    \right)^{q_d}\right)^{1/q_d}\\
    			   &\hspace{10ex}\times\sum_{j=n_1}^{n_2}2^{-j(\sigma_d+D/2-D/p_d)}\norm{\eta}_{p_d'}\quad (l^1\subseteq l^{q_d'} )\\
			    &\leq  
			    L_D
			    \sum_{j=n_1}^{n_2}2^{-j(\sigma_d+D/2-D/p_d)} \E_{X_1,\dots, X_n}\norm{\eta}_{p_d'}\\
			    &\leq  
			    L_D
			    \sum_{j=n_1}^{n_2}2^{-j(\sigma_d+D/2-D/p_d)}
			    \left(\E_{X_1,\dots, X_n}
			    \sum_{\lambda\in \Lambda_j}|\eta_{\lambda}|^{p_d'}
			    \right)^{1/p_d'}
			\end{align*}
		where $p_d'$ is the conjugate of $p_d$ i.e. $\frac{1}{p_d}+\frac{1}{p_d'}=1$ and we applied Jensen's to get the last inequality.

        \end{proof}
        \begin{lemma}
            \label{lemma:besov_ub}
            Let $f\in B^{\sigma_g}_{p_g,q_g}$ where $\sigma_g> D/p_g$ then 
            \[
                \norm{f}_\infty \leq 4A\norm{\psi}_\infty L_g (1-2^{(\sigma_g-D/p_g)q_g'})^{-1/q_g'}
            \]
            This implies that sufficiently smooth Besov spaces $B^{\sigma_g}_{p_g,q_q}$ are uniformly bounded.
        \end{lemma}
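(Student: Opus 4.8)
The plan is to expand $f$ in the wavelet basis and estimate $\norm{f}_\infty$ one resolution at a time, exploiting the fact that, at each scale, the translated wavelets have uniformly bounded overlap. Concretely, I would write $f = \sum_{k\in\Z^D}\alpha_k\phi_k + \sum_{j\ge0}\sum_{\lambda\in\Lambda_j}\beta_\lambda\psi_\lambda$, where $\phi_k(x) = \phi(x-k)$ and $\psi_\lambda(x) = 2^{Dj/2}\psi_\epsilon(2^jx-k)$ for $\lambda = 2^{-j}k + 2^{-j-1}\epsilon \in \Lambda_j$. Since $\phi$ and each $\psi_\epsilon$ are supported in $[-A,A]^D$, at any fixed point $x$ and scale $j$ the number of indices $\lambda\in\Lambda_j$ with $\psi_\lambda(x)\neq0$ is bounded by a constant depending only on $A$ and $D$ (at most $2A+1\le 4A$ when $D=1$), since $\psi_\lambda$ is then supported in a cube of side $2A\,2^{-j}$ about $2^{-j}k$.

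First I would use this overlap bound to get, for every $j\ge0$,
\[
    \norm{\textstyle\sum_{\lambda\in\Lambda_j}\beta_\lambda\psi_\lambda}_\infty \lesssim 2^{Dj/2}\norm{\psi}_\infty\,\norm{\{\beta_\lambda\}_{\lambda\in\Lambda_j}}_{l^\infty} \le 2^{Dj/2}\norm{\psi}_\infty\,\norm{\{\beta_\lambda\}_{\lambda\in\Lambda_j}}_{l^{p_g}},
\]
using $\norm{\cdot}_{l^\infty}\le\norm{\cdot}_{l^{p_g}}$, and similarly $\norm{\sum_k\alpha_k\phi_k}_\infty \lesssim \norm{\phi}_\infty\norm{\{\alpha_k\}}_{l^{p_g}}$ for the coarse-scale term. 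By definition of the Besov norm one may write $\norm{\{\beta_\lambda\}_{\lambda\in\Lambda_j}}_{l^{p_g}} = 2^{-j(\sigma_g+D(1/2-1/p_g))}b_j$ with $\norm{\{\alpha_k\}}_{l^{p_g}}\le L_g$ and $\norm{\{b_j\}_{j\ge0}}_{l^{q_g}}\le L_g$; substituting, the prefactor $2^{Dj/2}$ combines with the normalization exponent to leave a net factor $2^{-j(\sigma_g-D/p_g)}$, so that
\[
    \norm{f}_\infty \lesssim \norm{\psi}_\infty\Big(L_g + \sum_{j\ge0}2^{-j(\sigma_g-D/p_g)}b_j\Big).
\]
I would then apply H\"older's inequality with exponents $q_g$ and $q_g'$ to the remaining sum: since $\sigma_g>D/p_g$, the geometric series $\sum_{j\ge0}2^{-j(\sigma_g-D/p_g)q_g'}$ converges, to $(1-2^{-(\sigma_g-D/p_g)q_g'})^{-1}$, so the sum is at most $L_g(1-2^{-(\sigma_g-D/p_g)q_g'})^{-1/q_g'}$, with the obvious reading as a bare geometric sum or a supremum in the endpoint cases $q_g=1$, $q_g=\infty$. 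Assembling the two terms and tracking the numerical constant then yields the claimed bound; as a byproduct the wavelet series converges uniformly, which is precisely what lets us identify $f$ with a genuinely bounded function.

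The only step that is not essentially a one-line computation is the bounded-overlap count at each resolution, together with carrying the resulting combinatorial constant cleanly enough to recover the factor $4A\norm{\psi}_\infty$ asserted in the statement; once the Besov normalization is plugged in, the remainder is an immediate geometric-series estimate via H\"older. A minor additional care point is the degeneration of H\"older's inequality when $q_g\in\{1,\infty\}$, which is handled separately but causes no real difficulty.
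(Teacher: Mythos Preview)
Your proposal is correct and follows essentially the same route as the paper: bound each resolution level by $2^{Dj/2}\norm{\psi}_\infty\norm{\{\beta_\lambda\}}_{l^\infty}\le 2^{Dj/2}\norm{\psi}_\infty\norm{\{\beta_\lambda\}}_{l^{p_g}}$, then apply H\"older in $j$ with exponents $q_g,q_g'$ to turn the sum $\sum_j 2^{-j(\sigma_g-D/p_g)}b_j$ into the Besov norm times the geometric factor $(1-2^{-(\sigma_g-D/p_g)q_g'})^{-1/q_g'}$. The only cosmetic difference is that where you argue the bounded-overlap estimate directly from the support size of $\psi$, the paper simply invokes the Meyer norm-equivalence proposition (Proposition~\ref{norm_prop}) at $p=\infty$, which encodes exactly that count; the remaining H\"older-plus-geometric-series computation is identical.
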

        \begin{proof}
           We have that 
            $\sum_{k\in \Z^D}\alpha_{ k}\phi_k + 
			\sum_{j\geq 0}\sum_{\lambda\in \Lambda_j} \beta_{\lambda}\psi_{\lambda}$ converges to $f$ in $\L_\infty$. 
			So, using the fact that $l^{p_d}\subseteq l^\infty$ and proposition \ref{lemma1},
				\begin{align*}
					\norm{f}_\infty	\leq 
							2A\norm{\psi}_\infty
							\left(\norm{\{\alpha_k\}_{k\in \Z^D}}_\infty	+
							\sum_{j\geq 0} 2^{D j/2}\norm{\{\beta_{\lambda}\}_{\lambda\in \Lambda_j}}_\infty)\right).
				\end{align*}
				
				We can upper bound, by H\"older's inequality,
				\begin{align*}
				    \sum_{j\geq 0} 2^{D j/2}\norm{\{\beta_{\lambda}\}_{\lambda\in \Lambda_j}}_\infty
						&\leq 	
							\sum_{j\geq 0}\frac{1}{2^{j(\sigma_g-D/p_g)}}
							\times
						 2^{ j(\sigma_g+D/2-D/p_g)}\norm{\{\beta_{\lambda}\}_{\lambda\in \Lambda_j}}_\infty \\
						&\leq 
						     \left(\sum_{j\geq 0}\frac{1}{2^{j(\sigma_g-D/p_g)q_g'}}\right)^{1/q_g'}
							\left(\sum_{j\geq 0} 2^{j q_g(\sigma_g+D/2-D/p_g)}\norm{\{\beta_{\lambda}\}_{\lambda\in \Lambda_j}}_\infty^{q_g} \right)^{1/q_g}
							\\
						&\leq     \left(\frac{1}{1-2^{-(\sigma_g-D/p_g)q_g'}}\right)^{1/q_g'}
							\left(\sum_{j\geq 0} 2^{j q_g(\sigma_g+D/2-D/p_g)}\norm{\{\beta_{\lambda}\}_{\lambda\in \Lambda_j}}_{p_g}^{q_g} \right)^{1/q_g}\\
						&\leq 
						    \left(1-2^{-(\sigma_g-D/p_g)q_g'}\right)^{-1/q_g'} \norm{f}^{\sigma_g}_{p_g q_g} \\
						&\leq 
							\left(1-2^{-(\sigma_g-D/p_g)q_g'}\right)^{-1/q_g'} L_g.
				\end{align*}
				
			Putting the above together we obtain the required upper bound.
        \end{proof}
        
        We also need a few preliminary results namely, the moments of error of linear estimates of the wavelet coefficients are essentially bounded by $1/\sqrt{n}$ and the probability that this error is large is negligibly small. In particular,

        \begin{lemma}(Moment Bounds)
            Let $X_1, \dots, X_n \sim p$, $m\geq 1$ s.t. there is a constant $c$ with $\E_p|\psi_\lambda(X)|^m\leq c 2^{Dj(m/2-1)}$. Let
                \begin{align*}
					\gamma_{\lambda}^p 
					    &= \E[\psi_\lambda(X)],\\
					\hat{\gamma}_{\lambda} 
					    &= \frac{1}{n}\sum_{i=1}^n \psi_\lambda(X_i),
				\end{align*}
		    Then for all $j$ s.t. $2^{Dj}\in \mathcal{O}(n)$, 
				\[
				    \E[|\hat{\gamma}_{jk}-\gamma_{jk}|^m]
				        \leq c n^{-m/2}.
				\]
            where $c = c_m \left(\E_p|\psi_\lambda(X)|^2\right)^{m/2}$ is a constant.
        \end{lemma}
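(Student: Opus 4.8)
The plan is to recognize this as a direct application of Rosenthal's inequality to the centered summands $Y_i := \psi_\lambda(X_i) - \gamma_\lambda^p$. These satisfy $\E[Y_i] = 0$ and $\hat\gamma_\lambda - \gamma_\lambda^p = \frac{1}{n}\sum_{i=1}^n Y_i$, and $\sigma^2 := \E[Y_i^2] = \mathrm{Var}_p(\psi_\lambda(X)) \le \E_p|\psi_\lambda(X)|^2$. So the quantity to be bounded is exactly the $m$th absolute moment of a normalized sum of i.i.d.\ mean-zero random variables, and Rosenthal's inequality (stated earlier) is tailor-made for this.

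First I would handle the easy range $1 \le m \le 2$, where the second case of Rosenthal's inequality gives immediately
\[
  \E\left[\left|\hat\gamma_\lambda - \gamma_\lambda^p\right|^m\right]
  = \E\left[\left|\frac{1}{n}\sum_{i=1}^n Y_i\right|^m\right]
  \le \sigma^m n^{-m/2}
  \le \left(\E_p|\psi_\lambda(X)|^2\right)^{m/2} n^{-m/2},
\]
which is the claimed bound (here one may take $c_m = 1$). For $m > 2$, the first case of Rosenthal's inequality gives
\[
  \E\left[\left|\hat\gamma_\lambda - \gamma_\lambda^p\right|^m\right]
  \le c_m\left(\frac{\sigma^m}{n^{m/2}} + \frac{\E|Y_1|^m}{n^{m-1}}\right).
\]
The first term is controlled by $\sigma^m = (\sigma^2)^{m/2} \le (\E_p|\psi_\lambda(X)|^2)^{m/2}$, giving exactly the advertised form up to the factor $c_m$. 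For the second term I would bound $\E|Y_1|^m \le 2^{m-1}\bigl(\E_p|\psi_\lambda(X)|^m + |\gamma_\lambda^p|^m\bigr) \le 2^m\,\E_p|\psi_\lambda(X)|^m$ (by Jensen's inequality applied to $|\cdot|^m$), and then invoke the hypothesis $\E_p|\psi_\lambda(X)|^m \le c\,2^{Dj(m/2-1)}$. Since $2^{Dj} = \mathcal O(n)$ and $m/2 - 1 > 0$, we get $2^{Dj(m/2-1)} = \mathcal O(n^{m/2-1})$, hence $\E|Y_1|^m/n^{m-1} = \mathcal O(n^{\,m/2-1-(m-1)}) = \mathcal O(n^{-m/2})$. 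Adding the two estimates yields $\E[|\hat\gamma_\lambda - \gamma_\lambda^p|^m] \lesssim n^{-m/2}$, as claimed.

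The argument is essentially routine; there is no serious obstacle. The one point that requires a moment's care is that the ``non-Gaussian'' term $\E|Y_1|^m/n^{m-1}$ in Rosenthal's inequality is tamed only because of the \emph{joint} use of the growth restriction $2^{Dj} = \mathcal O(n)$ and the fact that $m > 2$ (so that the exponent $m/2-1$ is positive); for $1 \le m \le 2$ this term does not even appear. I would also note that the constant displayed in the statement, $c = c_m(\E_p|\psi_\lambda(X)|^2)^{m/2}$, is slightly informal: the genuine constant additionally absorbs the implied constant in $2^{Dj} = \mathcal O(n)$ and the factor from bounding $\E|Y_1|^m$ in terms of $\E_p|\psi_\lambda(X)|^m$ via the hypothesis, but all of these depend only on $m$, $D$, and the constant $c$ in the hypothesis, and not on $n$ or $j$.
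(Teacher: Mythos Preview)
Your proposal is correct and follows essentially the same route as the paper: center the summands, bound $\E|Y_i|^m \le 2^m \E_p|\psi_\lambda(X)|^m$ via Jensen, and apply Rosenthal's inequality, using the assumption $2^{Dj}=\mathcal O(n)$ to absorb the second (``non-Gaussian'') term into $n^{-m/2}$. The paper compresses the two regimes into a single line via the exponent $(m/2-1)_+$, whereas you treat $1\le m\le 2$ and $m>2$ separately, but the content is identical.
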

        
        \begin{proof}
            Since $\psi_\lambda$ is bounded for every $\lambda$, let
			\begin{align*}
			        Y_i 
				    &= \psi_\lambda(X_i)- \E[\psi_\lambda(X)]
			\end{align*}
			then for all $m\geq 1$, applying Jensen's inequality repeatedly we get
				\begin{align*}
					\E[|Y_i|^m] 
					    &\leq \E[\left(|\psi_\lambda(X_i)|+|\E[\psi_\lambda(X_i)]|\right)^m] &\text{(triangle inequality)}\\
					    &\leq 2^{m-1}\left( \E[|\psi_\lambda(X_i)|^m]+|\E[\psi_\lambda(X_i)]|^m
					        \right) 
					        &\text{(Jensen's)}\\
						&\leq 
							2^{m}\E[|\psi_\lambda(X_i)|^m].  &\text{(Jensen's)} 
				\end{align*}
            Therefore, by Rosenthal's inequality we have,
			\begin{align*}
	            \E[|\gamma_{\lambda}^p-\hat{\gamma}_{\lambda}|^m]
					&\leq 
			         c_m \left( \left(\E_p|\psi_\lambda(X)|^2\right)^{m/2}
			                	+ c\left(\frac{2^{Dj}}{n}\right)^{(m/2-1)_+}
						\right)n^{-m/2}
			\end{align*}
			where $c_m$ is a constant that only depends on $m$. Therefore,
			\[
			    \E[|\gamma_{\lambda}^p-\hat{\gamma}_{\lambda}|^m]
					\leq c_m \left(\E_p|\psi_\lambda(X)|^2\right)^{m/2} n^{-m/2}
			\]
        \end{proof}
        Note that we have from above $2^{Dj_1}\leq n$ so this bound holds for any $j\leq j_1$.

        \begin{lemma}(Large Deviations)
            Let $X_1, \dots, X_n \sim p$ such that for a constant $c$, $\E_p|\psi_\lambda(X)|^2\leq c $. Let
                \begin{align*}
					\gamma_{\lambda}^p 
					    &= \E[\psi_\lambda(X)],\\
					\hat{\gamma}_{\lambda} 
					    &= \frac{1}{n}\sum_{i=1}^n \psi_\lambda(X_i),
				\end{align*}
            Let $l=\sqrt{j/n}$ and $\gamma>0$, then, for all $j$ s.t. $2^{Dj}\in o(n)$,
			    we have,
			    \begin{align*}
			    \Pr(|\hat{\gamma}_{\lambda}-\gamma_{\lambda}|>(K/2)l)
					&\leq 2 \times 2^{-\gamma n l^2}
			     \end{align*}
			where $K$ large enough such that 
		     \[
		            \frac{K^2}
					{8(c+\norm{\psi_\epsilon}_\infty(K/3))}> \log 2\gamma
		       \]
        \end{lemma}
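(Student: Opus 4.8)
The plan is to reduce everything to a single application of Bernstein's inequality (the second lemma above) applied to the centered summands $Y_i := \psi_\lambda(X_i) - \E_p[\psi_\lambda(X)]$, for which $\hat\gamma_\lambda - \gamma_\lambda^p = \frac1n\sum_{i=1}^n Y_i$. First I would check Bernstein's hypotheses: $\E[Y_i] = 0$ by construction; writing $\sigma^2 := \E[Y_i^2] = \mathrm{Var}_p(\psi_\lambda(X)) \le \E_p|\psi_\lambda(X)|^2 \le c$, so the variance is controlled by the hypothesis; and, recalling the normalization $\psi_\lambda(x) = 2^{Dj/2}\psi_\epsilon(2^jx-k)$, the summands are bounded, $|Y_i| \le 2\norm{\psi_\lambda}_\infty = 2^{Dj/2+1}\norm{\psi_\epsilon}_\infty =: \norm{Y}_\infty < \infty$. (Since Bernstein's bound is increasing in the variance parameter, using $\sigma^2 \le c$ in place of an exact variance is harmless.)

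Next, apply Bernstein at deviation level $\lambda' := (K/2)l = (K/2)\sqrt{j/n}$, giving
\[
\Pr\left(|\hat\gamma_\lambda - \gamma_\lambda| > (K/2)l\right) \le 2\exp\left(-\frac{n\lambda'^2}{2\left(\sigma^2 + \norm{Y}_\infty \lambda'/3\right)}\right).
\]
Then substitute the elementary identities $nl^2 = j$ (so $n\lambda'^2 = (K^2/4)j$) and $\norm{Y}_\infty\lambda' = K\norm{\psi_\epsilon}_\infty\sqrt{2^{Dj}j/n}$, so that the exponent becomes
\[
-\frac{K^2 j}{8\sigma^2 + \tfrac{8K}{3}\norm{\psi_\epsilon}_\infty\sqrt{2^{Dj}j/n}}.
\]
The key step is that the hypothesis $2^{Dj} \in o(n)$ forces $\sqrt{2^{Dj}j/n} \le 1$ for the relevant range of $j$ (and $n$ large), so the denominator is at most $8\sigma^2 + \tfrac{8K}{3}\norm{\psi_\epsilon}_\infty \le 8\left(c + \norm{\psi_\epsilon}_\infty(K/3)\right)$; hence the exponent is at most $-\tfrac{K^2 j}{8(c + \norm{\psi_\epsilon}_\infty(K/3))}$. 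By the assumed lower bound on $K$, namely $\tfrac{K^2}{8(c+\norm{\psi_\epsilon}_\infty(K/3))} > \gamma\log 2$, this exponent is at most $-\gamma(\log 2)j = -\gamma(\log 2)nl^2$, so $\exp(\cdot) \le 2^{-\gamma n l^2}$, yielding the claimed bound $2\cdot 2^{-\gamma n l^2}$.

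The only delicate point — and the thing to get right in a careful write-up — is the bookkeeping around the wavelet normalization factor $2^{Dj/2}$ appearing in $\norm{Y}_\infty$: it must be absorbed against the $\sqrt{j/n}$ in $\lambda'$ using $2^{Dj}\in o(n)$, so that $\norm{Y}_\infty\lambda'$ stays comparable to $\norm{\psi_\epsilon}_\infty K$ instead of diverging. Strictly, $2^{Dj}=o(n)$ alone only bounds $\sqrt{2^{Dj}j/n}$ once one also uses $j = O(\log n)$ (which holds throughout the applications, where $j \le j_1$ with $2^{Dj_1}\lesssim n$); apart from this substitution everything is routine.
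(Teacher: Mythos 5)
Your argument is correct and follows essentially the same route as the paper: apply Bernstein's inequality to the centered summands $Y_i = \psi_\lambda(X_i) - \E[\psi_\lambda(X)]$, use $\E[Y_i^2] \le c$ and $\|Y\|_\infty \lesssim 2^{Dj/2}\|\psi_\epsilon\|_\infty$, substitute $n\lambda'^2 = (K^2/4)\,j$ and $n l^2 = j$, and absorb the $2^{Dj/2} l = \sqrt{2^{Dj}j/n}$ factor using $2^{Dj}\in o(n)$ before invoking the lower bound on $K$. Your parenthetical remark --- that $2^{Dj}=o(n)$ alone only controls $\sqrt{2^{Dj}j/n}$ once one also uses $j = O(\log n)$ (true in the applications, where $j\le j_1$) --- is a small clarification the paper elides.
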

        \begin{proof}
            Applying Bernstein's inequality we have 
			\begin{align*}	                         \Pr(|\hat{\gamma}_{\lambda}-\gamma_{\lambda}|>(K/2)l)
					&\leq 
						2 \exp \left(-\frac{n(K/2)^2l^2}
						{2(c+2^{D j/2}\norm{\psi_\epsilon}_\infty(K/3)l)}\right)\\
					&\leq 
						2 \exp \left(-\frac{K^2 n l^2}
						{8(\L_g+\norm{\psi_\epsilon}_\infty(K/3))}\right)
			\end{align*}
		    This implies for $K$ satisfying the above condition, 
			     \begin{align*}
    			    \Pr(|\hat{\gamma}_{\lambda}-\gamma_{\lambda}|>(K/2)l)
    					&\leq
    					2 \times 2^{ \left(- \gamma n l^2\right)}
			     \end{align*}
        \end{proof}

        Now for every $j\leq j_1$, $l$ satisfies the requirements of the above lemma. 
        So if $n l^2 (=j) \rightarrow\infty$ as $n\rightarrow \infty$ the probability of large deviation goes to zero.

        \begin{lemma}({\bf Variance})
            \label{lemma:variance}
            Let $X_1,\dots, X_n \sim p$ where $p$ is compactly supported, such that for a constant $c$, $\E_p|\psi_\lambda(X)|^m\leq c 2^{Dj(m/2-1)}$. Let $\F_d = B^{\sigma_d}_{p_d,q_d}$, then the variance of a linear wavelet estimator $\hat{p}$ with $j_0$ terms i.e. 
                \[
                    \hat{p}_n 
		        =   \sum_{ k\in \Z} \hat{\alpha}_{k} \phi_{k}+
		            \sum_{j=0}^{j_0} \sum_{\lambda\in \Lambda_j} \hat{\beta}_{\lambda}\psi_{\lambda}
                \]
            is bounded by 
                \[
                    d_{\F_d}(\hat{p}_n,\E[\hat{p}_n]) \leq c
                    \left(     \frac{1}{\sqrt{n}}+\frac{2^{j_0(D/2-\sigma_d)}}{\sqrt{n}}
                    \right)
                \]
            where $c = c_{p_d'} \left(\E_p|\psi_\lambda(X)|^2\right)^{1/2}$ is a constant.
        \end{lemma}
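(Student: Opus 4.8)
The plan is to push everything into wavelet-coefficient space, bound the two resulting stochastic terms with Lemma~\ref{lemma:discriminator} and the moment bound stated above, and then sum a geometric series. First, observe that $\hat p_n$ and $\E[\hat p_n]$ are finite linear combinations of the compactly supported wavelets, hence compactly supported and in $\L^{p_d'}$, and that $\E[\hat\alpha_k]=\alpha_k^p$, $\E[\hat\beta_\lambda]=\beta_\lambda^p$, so $\E[\hat p_n]=\sum_{k\in\Z}\alpha_k^p\phi_k+\sum_{j=0}^{j_0}\sum_{\lambda\in\Lambda_j}\beta_\lambda^p\psi_\lambda$. Its proof shows that Lemma~\ref{lemma:coeff} only uses compact support and integrability of its two arguments, not that they are genuine densities, so it applies to the signed function $\hat p_n-\E[\hat p_n]$. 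Combining this with the symmetry of the ball $\F_d$ (so that $\sup_{f\in\F_d}|a(f)|=\sup_{f\in\F_d}a(f)$ for linear $a$) and splitting the supremum of a sum into a sum of suprema, I would obtain
\begin{align*}
\E\, d_{\F_d}(\hat p_n,\E[\hat p_n])
 &\leq \E\sup_{f\in\F_d}\sum_{k\in\Z}\alpha_k^f(\hat\alpha_k-\alpha_k^p)
 + \E\sup_{f\in\F_d}\sum_{j=0}^{j_0}\sum_{\lambda\in\Lambda_j}\beta_\lambda^f(\hat\beta_\lambda-\beta_\lambda^p).
\end{align*}

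Next I would apply Lemma~\ref{lemma:discriminator}, taking $\eta$ to be the coefficient-error sequence ($n_1=n_2=0$ for the $\phi$-term, $n_1=0$, $n_2=j_0$ for the $\psi$-terms). For the $\psi$-part this gives
\[
\E\sup_{f\in\F_d}\sum_{j=0}^{j_0}\sum_{\lambda\in\Lambda_j}\beta_\lambda^f(\hat\beta_\lambda-\beta_\lambda^p)
\leq L_D\sum_{j=0}^{j_0} 2^{-j(\sigma_d+D/2-D/p_d)}\left(\E\sum_{\lambda\in\Lambda_j}|\hat\beta_\lambda-\beta_\lambda^p|^{p_d'}\right)^{1/p_d'}.
\]
Since $p$ is supported on $[-T,T]$ and each $\psi_\lambda$ with $\lambda\in\Lambda_j$ is supported in a ball of radius $\lesssim 2^{-j}A$, only $O(2^{Dj})$ of the summands are nonzero; and for each such $\lambda$, the moment bound above (with $m=p_d'\geq 1$, using its hypothesis $\E_p|\psi_\lambda(X)|^m\leq c\,2^{Dj(m/2-1)}$ and the fact that $j\leq j_0\leq j_1$ forces $2^{Dj}\leq n$) yields $\E|\hat\beta_\lambda-\beta_\lambda^p|^{p_d'}\lesssim (\E_p|\psi_\lambda(X)|^2)^{p_d'/2}\,n^{-p_d'/2}$. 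Hence the parenthesized factor is $\lesssim c\,2^{Dj/p_d'}n^{-1/2}$ with $c=c_{p_d'}(\E_p|\psi_\lambda(X)|^2)^{1/2}$.

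Substituting and using the exponent identity $-\sigma_d-D/2+D/p_d+D/p_d' = D/2-\sigma_d$, the $\psi$-part is bounded by
\[
L_D\,c\,n^{-1/2}\sum_{j=0}^{j_0} 2^{j(D/2-\sigma_d)} \;\lesssim\; c\left(\frac{1}{\sqrt n}+\frac{2^{j_0(D/2-\sigma_d)}}{\sqrt n}\right),
\]
since the geometric sum is $O(1)$ when $\sigma_d>D/2$ and $O(2^{j_0(D/2-\sigma_d)})$ when $\sigma_d<D/2$ (the borderline $\sigma_d=D/2$ costs only an extra logarithmic factor, which I would absorb into $c$). The $\phi$-term is handled identically but is simpler: only $O(1)$ indices $k$ are relevant, $\phi_k$ is bounded, so Rosenthal's inequality gives $\E|\hat\alpha_k-\alpha_k^p|^{p_d'}\lesssim n^{-p_d'/2}$ and Lemma~\ref{lemma:discriminator} makes the whole $\phi$-term $\lesssim c\,n^{-1/2}$. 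Adding the two contributions gives the claimed estimate. The only genuinely delicate points are (i) checking that Lemma~\ref{lemma:coeff} applies to the signed function $\hat p_n-\E[\hat p_n]$ rather than to probability densities — which follows by inspecting its proof — and (ii) the exponent bookkeeping in the geometric series together with the harmless logarithmic boundary case $\sigma_d=D/2$; everything else is routine.
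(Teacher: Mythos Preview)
Your proposal is correct and follows essentially the same route as the paper: apply Lemma~\ref{lemma:coeff} to pass to coefficient space, bound the $\phi$- and $\psi$-terms separately via Lemma~\ref{lemma:discriminator}, invoke the moment bound on each coefficient error together with the $O(2^{Dj})$ count of nonzero indices, and sum the resulting geometric series in $2^{j(D/2-\sigma_d)}$. The only differences are that you are slightly more explicit than the paper about why Lemma~\ref{lemma:coeff} applies to the signed function $\hat p_n-\E[\hat p_n]$ and about the borderline case $\sigma_d=D/2$, both of which the paper passes over silently.
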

        \begin{proof}
            Since $\F_d = B^{\sigma_d}_{p_d,q_d}$ and $p$ is compactly supported we can, by lemma \ref{lemma:coeff} upper bound
                \[
                    \mathrm{E}_{X_1, \dots, X_n}
					\sup_{f\in \F_d}
					   	\sum_{k\in \Z}
						\alpha^f_k
						\left(
						\alpha_k^p- \hat{\alpha}_k\right)+
					\mathrm{E}_{X_1, \dots, X_n}
					\sup_{f\in \F_d}
						\sum_{j=0}^{j_0}\sum_{\lambda\in \Lambda_j}
							\beta^f_{\lambda}
							\left(
							\beta^p_{\lambda}- \hat{\beta}_{\lambda}\right)
                \]

            Since, for a constant $c$, $\E_p|\psi_\lambda(X)|^m\leq c 2^{Dj(m/2-1)}$ we can apply the moment bound below.
            For the first term  we have, (taking $\gamma=\alpha$ and $n_1=n_2=0$ in lemma \ref{lemma:discriminator} above)
				\begin{align*}
					\mathrm{E}_{X_1, \dots, X_n}
					\sup_{f\in \F_d}
					   	&\sum_{k\in \Z}
							\alpha^f_k
							\left(
							\alpha_k^p- \hat{\alpha}_k\right)&&\\ 
						&\leq 
							L_D 
							\left(
							\sum_{k} 
							\mathrm{E}_{X_1, \dots, X_n} 
					 		\left|\alpha_k^p- \hat{\alpha}_k\right|^{p_d'}\right)^{1/p_d'} 
					 		&&\quad \text{(finitely many terms)}\\
						&\leq 
							c L_D \norm{p}_\infty
							\left(
								(T+A)n^{-p_d'/2}
							\right)^{1/p_d'} 
					 		&&\quad \text{(moment bound)}\\
						&\leq
							 c
							n^{-1/2}&&
				\end{align*}
					where we use the fact only finitely many of the $\alpha$s are non-zero because of the compactness of the support of the densities we consider and the compactness of the wavelets.
					Similarly taking $\gamma = \beta$, $n_1=0$, $n_2=j_0$ in lemms \ref{lemma:discriminator} we have, using the moment bound as above,
				    \begin{align*}
						 \mathrm{E}_{X_1, \dots, X_n}
						 \sup_{f\in \F_d}
								&\sum_{j=0}^{j_0}\sum_{\lambda\in \Lambda_j}
								\beta^f_{\lambda}
								\left(
								\beta^p_{\lambda}- \hat{\beta}_{\lambda}\right)\\
							&\leq                 c\norm{p}_\infty L_D\sum_{j=0}^{j_0}2^{-j(\sigma_d+D/2-D/p_d)}\left(2^{Dj}(T+A)n^{-p_d'/2}\right)^{1/p_d'}
								\\
							&\leq             L_D\sum_{j=0}^{j_0}2^{-j(\sigma_d+D/2-D/p_d)}2^{D j/p_d'}n^{-1/2}\\
						 	&\leq  
						 	    c L_D\norm{p}_\infty 
								\sum_{j=0}^{j_0}2^{j(D/2-\sigma_d)}
								n^{-1/2}\\
							&\leq 
							    c\norm{p}_\infty\begin{cases}
								2^{j_0(D/2-\sigma_d)}
								n^{-1/2}  &\sigma_d\leq D/2\\
								n^{-1/2} &\sigma_d>D/2
								\end{cases}
					\end{align*}
        \end{proof}
        
        \begin{lemma}({\bf Bias})
            \label{lemma:bias}
            Let $X_1,\dots, X_n \sim p$ where $p \in B^{\sigma_g}_{p_g,q_g}$ is compactly supported and $\sigma_g\geq D/p_g$, $\F_d = B^{\sigma_d}_{p_d,q_d}$. Then the bias of a linear wavelet estimator $\hat{p}$ with $j_0$ terms is bounded by 
                \[
                    d_{\F_d}(p,\E_p[\hat{p}_n])
                    \leq c 2^{-j_0(\sigma_d+\sigma_g-(D/p_g-D/p_d')_+)}
                \]
            where $c$ is a constant that depends on $p_d$ and $\norm{\psi}_m$.
        \end{lemma}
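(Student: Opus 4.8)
The plan is to recognize the bias $p-\E_p[\hat p_n]$ as the high-resolution tail of the wavelet expansion of $p$, rewrite the resulting IPM in coefficient space via Lemma~\ref{lemma:coeff}, and then apply the discriminator bound of Lemma~\ref{lemma:discriminator} together with the Besov decay of the coefficients of $p$. Concretely, since $\E_p[\hat\alpha_k]=\alpha_k^p$ and $\E_p[\hat\beta_\lambda]=\beta_\lambda^p$, the expectation $\E_p[\hat p_n]=\sum_k\alpha_k^p\phi_k+\sum_{j=0}^{j_0}\sum_{\lambda\in\Lambda_j}\beta_\lambda^p\psi_\lambda$ is exactly the projection of $p$ onto $V_{j_0+1}$, so $p-\E_p[\hat p_n]=\sum_{j>j_0}\sum_{\lambda\in\Lambda_j}\beta_\lambda^p\psi_\lambda$. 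Both $p$ and $\E_p[\hat p_n]$ are compactly supported elements of $\L^{p_d'}$ (for $p$ this uses that $p$ is bounded, e.g.\ by Lemma~\ref{lemma:besov_ub}; for $\E_p[\hat p_n]$ it is a finite combination of bounded compactly supported wavelets), so Lemma~\ref{lemma:coeff} yields $d_{\F_d}(p,\E_p[\hat p_n])=\sup_{f\in\F_d}\big|\sum_{j>j_0}\sum_{\lambda\in\Lambda_j}\beta_\lambda^f\beta_\lambda^p\big|$. Applying Lemma~\ref{lemma:discriminator} with the deterministic sequence $\eta=\beta^p$ and index range $j\in\{j_0+1,j_0+2,\dots\}$ then reduces the claim to bounding $L_d\sum_{j>j_0}2^{-j(\sigma_d+D/2-D/p_d)}\,\|\beta^p\|_{l^{p_d'}(\Lambda_j)}$.

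The next step is to control $\|\beta^p\|_{l^{p_d'}(\Lambda_j)}$ using $p\in B^{\sigma_g}_{p_g,q_g}(L_g)$. Taking the $l^\infty$ bound over $j$ in the Besov norm gives the level-wise estimate $\|\beta^p\|_{l^{p_g}(\Lambda_j)}\le L_g\,2^{-j(\sigma_g+D/2-D/p_g)}$. I would then split on whether $p_d'\ge p_g$: if so, $l^{p_g}\hookrightarrow l^{p_d'}$ with norm at most $1$ gives directly $\|\beta^p\|_{l^{p_d'}(\Lambda_j)}\le L_g\,2^{-j(\sigma_g+D/2-D/p_g)}$; if not (i.e.\ $p_d'<p_g$), I would use that $p$ has support in a fixed cube and each $\psi_\lambda$, $\lambda\in\Lambda_j$, has support in a cube of side $O(2^{-j})$, so at most $N_j=O(2^{jD})$ of the coefficients $\{\beta_\lambda^p\}_{\lambda\in\Lambda_j}$ are nonzero, and Hölder's inequality with exponent $p_g/p_d'>1$ gives $\|\beta^p\|_{l^{p_d'}(\Lambda_j)}\le N_j^{1/p_d'-1/p_g}\|\beta^p\|_{l^{p_g}(\Lambda_j)}\lesssim L_g\,2^{-j(\sigma_g+D/2-D/p_d')}$. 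Both cases can be summarized as $\|\beta^p\|_{l^{p_d'}(\Lambda_j)}\lesssim L_g\,2^{-j(\sigma_g+D/2-D/\min(p_g,p_d'))}$.

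Finally, substituting this bound and using $D/2-D/p_d=D/p_d'-D/2$, each summand becomes a constant times $2^{-j(\sigma_d+\sigma_g-(D/p_g-D/p_d')_+)}$; provided this exponent is positive (which holds throughout the feasible regime of interest, in particular under $\sigma_g\ge D/p_g$), the geometric series from $j_0+1$ onward sums to $\lesssim 2^{-j_0(\sigma_d+\sigma_g-(D/p_g-D/p_d')_+)}$, which is the claimed bound, with the constant absorbing $L_d$, $L_g$, the support radius, $A$, and the geometric-series factor. I expect the main obstacle to be the case analysis on $p_d'$ versus $p_g$ --- in particular, in the regime $p_d'<p_g$, arranging the Hölder/counting step so that the surplus exponent $D(1/p_d'-1/p_g)$ coming from the $O(2^{jD})$ nonzero coefficients per level is exactly cancelled and the rate saturates at $(D/p_g-D/p_d')_+$ rather than degrading; the compact support of $p$ (hence finitely many nonzero wavelet coefficients at each scale) is precisely what makes this work.
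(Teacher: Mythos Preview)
Your proposal is correct and follows essentially the same route as the paper: reduce the bias to the wavelet tail via Lemma~\ref{lemma:coeff}, apply Lemma~\ref{lemma:discriminator}, convert $\|\beta^p\|_{l^{p_d'}(\Lambda_j)}$ to $\|\beta^p\|_{l^{p_g}(\Lambda_j)}$ at the cost of a factor $2^{j(D/p_d'-D/p_g)_+}$, and sum the resulting geometric series using $\sigma_g\ge D/p_g$. The only difference is cosmetic: the paper writes the norm comparison as a single factor $2^{j(D/p_d'-D/p_g)_+}$ without further comment, whereas you spell out the two cases (embedding when $p_d'\ge p_g$; H\"older plus the $O(2^{jD})$ nonzero coefficients from compact support when $p_d'<p_g$) that underlie it.
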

        \begin{proof}
            Since $p$ is compactly supported, by lemma \ref{lemma:coeff} we need to upper bound 
            \[
                \sup_{\beta\in \F_d} 
				\sum_{j\geq j_1}\sum_{\lambda\in \Lambda} 
				\beta^f_{\lambda}
				\beta^p_{\lambda}
            \]
            Using lemma \ref{lemma:discriminator} and the fact that $\sigma_g\geq D/p_g$
			\begin{align*}
				\sup_{\beta\in \F_d} 
				&\sum_{j\geq j_0}\sum_{\lambda\in \Lambda} 
				    \beta^f_{\lambda}
				    \beta^p_{\lambda}\\
				&\leq L_D
						\sum_{j\geq j_0}
						2^{-j(\sigma_d+D/2-D/p_d)}
						\norm{\beta^p}_{p_d'}\\
				&= 
					L_D \sum_{j\geq j_0}	\frac{2^{j(\sigma_g+D/2-D/p_g)}}{2^{j(\sigma_d+\sigma_g+D-D/p_d-D/p_g)}}2^{j(D/p_d'-D/p_g)_+}
						\norm{\beta^p}_{p_g} \\
				&\leq 
					L_D \sum_{j\geq j_0}	\frac{2^{j(D/p_d'-D/p_g)_+}}{2^{j(\sigma_d+\sigma_g+D/p_d'-D/p_g)}}
					\sup_{j\geq j_0}2^{j(\sigma_g+D/2-D/p_g)}\norm{\beta^p}_{p_g}\\
				&\leq     2^{-j_0(\sigma_d+\sigma_g-(D/p_g-D/p_d')_+)}\norm{p}^{\sigma_g}_{ p_g q_g}&(\sigma_g\geq D/p_g)\\
				&\leq c 2^{-j_0(\sigma_d+\sigma_g-(D/p_g-D/p_d')_+)}
			\end{align*}
						
        \end{proof}

		Using lemmas \ref{lemma:bias} and \ref{lemma:variance} we get the following upper bound on the bias and variance of the linear wavelet estimator.  
		
		\[
		    c\left(
		        n^{-1/2} + n^{-1/2}2^{j_0(D/2-\sigma_d)}+2^{-j_0(\sigma_g+\sigma_d-D/p_g+D-D/p_d)}
		    \right)
		\]
		which when minimized for $j_0$ gives, 
		\[
		    2^{j_0} = n^{1/(2\sigma_g+D+2D/p_d'-2D/p_g)}
		\]
		which implies an upper bound of 
		\[
		    \lesssim n^{-1/2}+n^{-\frac{\sigma_g+\sigma_d-D/p_g+D-D/p_d}{2\sigma_g+D+2D/p_d'-2D/p_g}}
		\]
		as desired. 
\section{Proof of the Lower Bound}

    In this section we prove our main lower bound i.e. Theorem \ref{thm:nonlinear_lower_bound} using Fano's lemma and the Varshamov Gilbert bound as summarized below. 
		\begin{lemma} (Fano's Lemma; Simplified Form of Theorem 2.5 of \cite{tsybakov2009introduction})
		
            Fix a family $\P$ of distributions over a sample space $\X$ and fix a pseudo-metric $\rho : \P \times \P \to [0,\infty]$ over $\P$. Suppose there exists a set $T \subseteq \P$ such that there is a $p_0\in T$ with $p\ll p_0$ $\forall p\in T$ and  
            \[s := \inf_{p,p' \in T} \rho(p,p') > 0
              \quad \text{ , } \quad
              \sup_{p \in T} D_{KL}(p,p_0)
              \leq \frac{\log |T|}{16},\]
            where $D_{KL} : \P \times \P \to [0,\infty]$ denotes Kullback-Leibler divergence.
            Then,
            \[\inf_{\hat p} \sup_{p \in \P} \E \left[ \rho(p,\hat p) \right]
              \geq \frac{s}{16}\]
            where the $\inf$ is taken over all estimators $\hat p$. 
        \end{lemma}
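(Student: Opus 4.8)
The plan is to run the classical reduction from minimax estimation to a multiple hypothesis testing problem, and then close the argument with a Fano-type bound on the testing error. First I would note that it suffices to lower-bound $\sup_{p \in T} \E_p[\rho(p,\hat p)]$ since $T \subseteq \P$, and that $s := \inf\{\rho(p,p') : p,p' \in T,\ p \neq p'\}$ is to be read as the infimum over \emph{distinct} pairs (otherwise it is trivially $0$, so the hypothesis $s>0$ already forces this reading). Also, since $\log|T|$ appears as a finite quantity, $|T|<\infty$.

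\textbf{Reduction to a test.} Given any estimator $\hat p$ (a measurable function of the data $X\sim p$), define the induced selector $\hat p_T := \argmin_{q \in T}\rho(\hat p, q)$, with ties broken arbitrarily. The key deterministic fact is that $\hat p_T \neq p$ implies $\rho(\hat p, p) \geq s/2$: writing $q = \hat p_T \neq p$, the definition of $\argmin$ gives $\rho(\hat p, q) \le \rho(\hat p, p)$, while the pseudo-triangle inequality gives $s \le \rho(p,q) \le \rho(p,\hat p) + \rho(\hat p, q) \le 2\rho(\hat p, p)$. Combining with Markov's inequality, for every $p \in T$,
\[
  \E_p[\rho(p,\hat p)] \ge \tfrac{s}{2}\,\Pr\nolimits_p\!\big(\rho(p,\hat p)\ge \tfrac{s}{2}\big)\ge \tfrac{s}{2}\,\Pr\nolimits_p(\hat p_T \ne p),
\]
so, taking the worst case over $p\in T$ and then bounding it below by the uniform average over $T$,
\[
  \sup_{p\in\P}\E_p[\rho(p,\hat p)] \;\ge\; \frac{s}{2}\cdot\frac{1}{|T|}\sum_{p\in T}\Pr\nolimits_p(\hat p_T \ne p)\;\ge\;\frac{s}{2}\,\overline{p}_e,
\]
where $\overline{p}_e$ is the minimum over all tests $\psi:\X\to T$ of the uniform-prior average error $\frac{1}{|T|}\sum_{p\in T}\Pr_p(\psi\ne p)$. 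This holds for every $\hat p$, so it remains to show $\overline{p}_e \ge 1/8$.

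\textbf{Bounding the testing error.} Let $V$ be uniform on $T$ and, conditionally on $V=p$, let the data be drawn from $p$; then any test is an estimator $\hat V$ of $V$, and Fano's inequality (using $H(V|X)=\log|T|-I(V;X)$ and $\log(|T|-1)\le\log|T|$) gives $\overline{p}_e = \Pr(\hat V\ne V) \ge 1 - \frac{I(V;X)+\log 2}{\log|T|}$. Writing $\bar P = \frac{1}{|T|}\sum_{p\in T}P_p$, the identity $\sum_{p} D_{KL}(P_p\|Q) = \sum_p D_{KL}(P_p\|\bar P) + |T|\,D_{KL}(\bar P\|Q)$ shows $\bar P$ minimizes average KL, hence $I(V;X) = \frac{1}{|T|}\sum_p D_{KL}(P_p\|\bar P) \le \frac{1}{|T|}\sum_p D_{KL}(P_p\|P_0) \le \sup_{p\in T} D_{KL}(p,p_0)\le \frac{\log|T|}{16}$ by hypothesis; this uses the assumed absolute continuity $P_p\ll P_0$. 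Substituting yields $\overline{p}_e \ge \frac{15}{16}-\frac{\log 2}{\log|T|}$, which exceeds $1/8$ once $|T|\ge 3$; the corner case $|T|=2$ is handled separately by Le Cam's two-point bound combined with Pinsker's inequality, $\overline{p}_e \ge \tfrac12\big(1-\sqrt{D_{KL}(p_1,p_0)/2}\big)\ge \tfrac12\big(1-\sqrt{(\log 2)/32}\big) > 1/8$. (Alternatively, one may cite Theorem~2.5 of \citet{tsybakov2009introduction} verbatim with $\alpha=1/16$, since the premises we have assembled — pairwise separation used in the reduction, $P_p\ll P_0$, and the averaged KL bound — are exactly its hypotheses.) In all cases $\overline{p}_e\ge 1/8$, so $\inf_{\hat p}\sup_{p\in\P}\E[\rho(p,\hat p)]\ge \frac{s}{2}\cdot\frac18 = \frac{s}{16}$.

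\textbf{Main obstacle.} There is no substantive difficulty here — the statement is a repackaging of textbook material — so the only point requiring genuine care is constant-tracking: checking that the $1/16$ in the KL hypothesis propagates to exactly $s/16$ (the slack relative to the naive $s/8$ being precisely what absorbs the $\log 2$ term in Fano and the $|T|=2$ case), and ensuring the crude Fano bound is not invoked in the regime $|T|=2$ where it is vacuous.
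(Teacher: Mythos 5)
The paper does not actually prove this lemma — it states it as a ``Simplified Form of Theorem 2.5 of \citet{tsybakov2009introduction}'' and relies on that reference. Your proof correctly reconstructs the standard argument behind that theorem: reduce to a multiple-hypothesis test via the separation $s$ and the $\argmin$ selector, bound the Bayes testing error via Fano's inequality together with the minimizing-mixture property of $I(V;X)$ and the hypothesized KL bound, and patch the vacuous $|T|=2$ case with Le Cam plus Pinsker. The constant-tracking checks out ($\overline{p}_e \ge 15/16 - \log 2/\log|T| > 1/8$ for $|T|\ge 3$, and $\overline{p}_e \ge \tfrac12(1-\sqrt{\log 2/32}) > 1/8$ for $|T|=2$), and the interpretation of $s$ as an infimum over \emph{distinct} pairs is forced by the hypothesis $s>0$, as you note. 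This is a correct, self-contained proof that is in spirit identical to the cited source; the only thing it adds over the paper's bare citation is making the $1/16$ bookkeeping and the small-$|T|$ corner case explicit.
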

        
        \begin{lemma} (Varshamov-Gilbert bound (\cite{tsybakov2009introduction}))
            Let $\Omega = \{0,1\}^m$ where $m\geq 8$. Then there exists a subset $\{w^0,\dots, w^M\}$ of $\Omega$ such that $w^0=(0,\dots,0)$ and 
            \[
                \omega(w^j,w^k)\geq \frac{m}{8} \quad \forall 0\leq j, k\leq M
            \]
            where $M\geq 2^{m/8}$, where $\omega(w^j, w^k) = \sum_{i=1}^m 1_{\{w^j_i\neq w^k_i\}}$ is the Hamming distance.
            \label{lemma:varshamov_gilbert}
        \end{lemma}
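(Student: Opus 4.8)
The plan is to prove this by the classical greedy packing argument (the Gilbert--Varshamov construction from coding theory). I would build the set $\{w^0,\dots,w^M\}$ one element at a time: start by fixing $w^0 = (0,\dots,0)$, maintain a pool $S \subseteq \{0,1\}^m$ of strings not yet excluded (initially $S = \{0,1\}^m$), and at each step pick any $w^i$ still in $S$ and then delete from $S$ every string at Hamming distance strictly less than $m/8$ from $w^i$; halt when $S$ is empty, say after selecting $w^0,\dots,w^M$. By construction, for $i < j$ the string $w^j$ was not deleted during the step triggered by $w^i$, so $\omega(w^i,w^j) \ge m/8$; since this holds for every pair, the separation requirement is met, and $w^0 = (0,\dots,0)$ by choice.

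To control the cardinality $M+1$, I would argue by a volume (covering) bound. When the procedure halts, every string of $\{0,1\}^m$ has been deleted by, or equals, some selected $w^i$, which means the Hamming balls $B(w^i) := \{x \in \{0,1\}^m : \omega(w^i,x) < m/8\}$ cover $\{0,1\}^m$. Each such ball has the same size $V := \sum_{0 \le k < m/8} \binom{m}{k}$, because XOR-ing with $w^i$ is a Hamming-distance-preserving bijection of $\{0,1\}^m$. Hence $2^m = |\{0,1\}^m| \le \sum_{i=0}^M |B(w^i)| = (M+1)V$, so $M+1 \ge 2^m/V$.

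It then remains to bound $V$. Using the standard entropy estimate $\sum_{k \le \lambda m}\binom{m}{k} \le 2^{m H(\lambda)}$, valid for $0 \le \lambda \le 1/2$ with $H(\lambda) = -\lambda\log_2\lambda - (1-\lambda)\log_2(1-\lambda)$, and taking $\lambda = 1/8$, I get $V \le 2^{m H(1/8)}$ and therefore $M + 1 \ge 2^{m(1 - H(1/8))}$. A direct computation gives $H(1/8) = \tfrac{3}{8} + \tfrac{7}{8}\log_2\tfrac{8}{7} < 0.55$, so $1 - H(1/8) > 0.45 > 1/8$; combined with $m \ge 8$ this yields $M+1 \ge 2^{0.45 m} > 2^{m/8} + 1$, i.e.\ $M \ge 2^{m/8}$, as claimed.

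The construction and the covering inequality are entirely routine, so essentially the whole content of the proof is the inequality $1 - H(1/8) > 1/8$ --- equivalently, that a Hamming ball of relative radius $1/8$ occupies an exponentially vanishing fraction of the cube at a rate exceeding $1/8$. This is a numerical check rather than a genuine obstacle; the only care required is to invoke the entropy bound (or the cruder $\sum_{k \le \lambda m}\binom{m}{k} \le (e/\lambda)^{\lambda m}$) correctly and to confirm the final inequality holds for every integer $m \ge 8$, which follows since the gap is already comfortable at $m = 8$ and both sides are monotone in $m$.
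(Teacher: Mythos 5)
The paper does not prove this lemma at all; it cites it as Lemma 2.9 from Tsybakov's textbook, so there is no in-paper argument to compare against. Your greedy packing construction combined with the Hamming-ball covering bound and the binary-entropy estimate is a correct and standard proof of a Gilbert--Varshamov type bound, and it in fact delivers a stronger conclusion ($M+1 \gtrsim 2^{0.45 m}$) than the $M \ge 2^{m/8}$ required. Tsybakov's proof has essentially the same skeleton --- an iterative/greedy selection of codewords together with a volume bound on the set of strings excluded at each step --- but he controls the size of a Hamming ball by a somewhat different hands-on estimate rather than the entropy bound $\sum_{k \le \lambda m} \binom{m}{k} \le 2^{m H(\lambda)}$; your route is cleaner and leaves a wider margin. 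Two minor remarks. First, the lemma as quoted in the paper writes $\omega(w^j,w^k) \ge m/8$ for \emph{all} $0 \le j,k \le M$, which cannot hold when $j = k$; this should read $j \ne k$ (as in Tsybakov), and your proof correctly treats only distinct pairs. Second, "the gap is comfortable at $m=8$ and both sides are monotone" is not, on its own, a valid justification of $2^{0.45m} \ge 2^{m/8}+1$ for all $m \ge 8$; the correct one-line argument is that $2^{0.45m} = 2^{m/8}\cdot 2^{0.325 m} \ge 2^{2.6}\, 2^{m/8} > 2^{m/8}+1$ for $m \ge 8$, i.e.\ the ratio (not merely each side) is increasing and already large enough at $m=8$.
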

        
	    \begin{proof}(of Theorem \ref{thm:nonlinear_lower_bound})
	        We follow the method in Donoho et. al.  \cite{donoho1996density} and separate our proof into ``sparse'' and ``dense'' cases. As is standard procedure, for both cases we pick a finite subset of densities from $\F_g$ over which estimation is difficult. Since any function in a Besov space can be defined by its wavelet coefficients we pick a set of densities by an appropriate choice of wavelet coefficients. 
	        
	        Here we also need to pick a subset of functions from $\F_d$ so as to estimate $d_{\F_d}$. Following the method in \cite{singh2018minimax} we pick from $\F_d$, functions that are analogous to the ones we pick from $\F_g$ so that we measure the difference in the densities along the chosen perturbations. 
	        
	        We now fill in the details. We first let $g_0$ be a density function supported on an interval that contains $[-A,A]^D$ such that $\norm{g_0}_{\sigma_g p_g q_g}\leq L_G/2$ and $g_0 = c>0$ on $[-A,A]^D$.
	        
	        At a particular resolution $j$, we choose $2^{Dj}$ wavelets with disjoint supports; pick $\psi_{\lambda} = 2^{Dj/2}\psi_{\epsilon_1}(2^{Dj} x-k)$ indexed by $\lambda = 2^{-j}k + 2^{-(j+1)}\epsilon_1$ s.t. $k\in K_j$ where 
	            \[
			        K_j = \{-(2^j-1)A+2l A, l=0,\dots,(2^j-1)\}^D
			    \]
	        and $\epsilon_1=(1,0,\dots,0)$ (i.e. we pick the first wavelet). Note here that if $\lambda\neq \lambda'$ then $\psi_{\lambda}$ and $\psi_{\lambda'}$ have disjoint support.
	        
	        We now describe our choice of densities based on the set of coefficients $\zeta \subseteq \{\tau \in \Z^{|K_j|}:|\tau_\lambda|\leq 1\}$ i.e. 
	        \[
				\Omega_g := \{g_0+c_g\sum_{\lambda }\tau_{\lambda}\psi_{\lambda}:\tau \in \zeta, \lambda = 2^{-j}k+2^{-j-1}\epsilon_1, k\in K_j\} .
			\]
			
			If we pick $c_g$ to be small enough, every $p$ in $\Omega_g$ is a density function and is lower bounded on $[-A,A]^D$. Specifically if $c_g$ s.t.  
			    \[
			    c_g \leq
			        \frac{c}{2\norm{\psi}_{\infty}}2^{-D j/2}
		    	\]
			            
			then $\int g_0+c_g\sum_{\lambda}\tau_{\lambda}\psi_{\lambda} = 1$ (since $\int \psi_{\lambda}=0$) and,
				\[
					\norm{g_0-p}_{\infty} = c_g 2^{D j/2}\norm{\psi}_\infty \leq c/2
				\]
			so that $p$ is lower bounded on the domain of $\psi_{\lambda}$ by $c/2$ for every $\lambda$. This also implies that $p$ is always positive.
			 
			Now the following lemma states that if you have a small perturbation of a density s.t. the density is lower bounded on the support of the perturbation then the KL divergence between the perturbed and the original density is upper bounded by the $L^2$ norm of the perturbation. 
			\begin{lemma}
			    Let $g = g_0 + h , g_0$ be density functions such that $h\leq g_0$. If $S=\text{supp}(h) \subseteq \text{supp}(g)$ and $c\leq g$ on $S$, where $c$ is a constant. Then 
			        \[
			            D_{KL}(g^n, g_0^n) \leq 
			                c n \|g_0 - g\|^2_{\L^2}
			        \]
			\end{lemma}
			\begin{proof}
			    Since $g\leq 2g_0$ we have,
				\begin{align*}
					\frac{g_0-g}{g} &\geq -\frac{1}{2}
				\end{align*}
			so using the fact that $-\log(1+x)\leq x^2-x$ for all $x\geq -1/2$ we get
				\begin{align*}
					D_{KL}(g^n, g_0^n)
						& = n D_{KL}(g, g_0) \\
						& = 
						    n \int_S g(x) \log \frac{g(x)}{g_0(x)} \, dx \\
						& = 
						    -n \int_S g(x) \log \left( 1 + \frac{g_0(x) - g(x)}{g(x)} \right) \, dx \\
						& \leq 
						    n \int_S g(x) \left( \left( \frac{g_0(x) - g(x)}{g(x)} \right)^2 - \frac{g_0(x) - g(x)}{g(x)} \right) \, dx \\
						& = 
						    n \int_S \frac{\left( g_0(x) - g(x) \right)^2}{g(x)} \, dx
				\end{align*}
			which, since $g\geq c$ on $S$, is smaller than $c n \int_S \left( g_0(x) - g(x) \right)^2$ as desired. 
			\end{proof}
			
	        Using this fact we conclude that for any $p_\tau\in \Omega_g$, 
	        \[
	            KL(p_\tau, g_0) \leq n c_g^2 c \norm{\sum_{\lambda}\tau_\lambda\psi_\lambda}_{L^2}^2 = c n c_g^2 \norm{\tau}_{2}^2
	        \]
	        
	        Following the technique in \cite{singh2018adversarial} we also pick an analogous set of functions that live in $\F_d$ so that we can lower bound $d_{\F_D}$. In particular let 
				\[
					\Omega_d := \{c_d\sum_{\lambda}\tau_\lambda\psi_{\lambda}: \tau \in \zeta,\lambda = 2^{-j}k+2^{-j-1}\epsilon_1,k\in K_j\}
				\]
	        It now, only remains to choose appropriate sets $\zeta$ for the wavelet coefficients in each of the sparse and dense cases. In the remainder let $c$ be a constant not necessarily the same.

        \subsection*{Sparse or low-smoothness case:} 
            For the sparse/lower smoothness case we choose worst case densities to be perturbations along only a specific scaling of the wavelet at a time. In particular, let
				\[
				\zeta = \{ \tau : \tau_\lambda = 1, \tau_{\lambda'}=0, \lambda' \neq \lambda = 2^{-j}k+2^{-(j+1)}\epsilon_1, k \in K_j\}
				\]  
			We know from above that for any $c_g \leq c 2^{-Dj/2}$, every $p\in \Omega_g$ is a density such that $D_{KL}(p^n, g_0^n) \leq cn c_g^2 \norm{\tau}_2$.
			Now, we need
			\[
				\norm{g_0+c_g\psi_{\lambda}}^{\sigma_g}_{p_g q_g}\leq 
				\norm{{g_0}}^{\sigma_g}_{p_g q_g}+ 2^{j(\sigma_g+D/2-D/p_g)}c_g \leq L_g
			\]
			so that $\Omega_g\subseteq \F_g$. Since $\sigma_g\geq D/p_g$ the choice of  $c_g = c 2^{-j(\sigma_g+D/2-D/p_g)}$ suffices. 
			Similarly, $c_d = L_d 2^{-j(\sigma_d+D/2-D/p_d)}$ implies $\Omega_d\subseteq \F_d$.
			
			Then we pick $j$ large enough such that the KL divergence between any $p_\tau$ and $g_0$ is small. This enables us to apply Fano's lemma from above and get a lower bound. 
			
			So we need $c n c_g^2\leq \frac{\log |\zeta|}{16} = \frac{\log |K_j|}{16}$ i.e. 
			\[
			    n \leq c j/c_g^2 \iff n \leq 2^{2j(\sigma_g+D/2-D/p_g)}j 
			\]
			for the KL divergence to be small. 
			Given such a $j$ we have,
            \begin{align*}
				d_{\F_d}(p_\lambda, p_{\lambda'}) 
					&\geq \sup_{f\in \Omega_d} \left|\int c_g(f(x)(\psi_{\lambda}-\psi_{\lambda'})dx\right| = \norm{\psi}_{L^2}^2c_g c_d
			\end{align*}
			(since, $\norm{\psi_{\lambda}}_{L^2}^2 = \norm{\psi}_{L^2}^2$).
			So, if $2^j = (n/\log n )^{\frac{1}{2\sigma_g+D-2D/p_g}}$ we have, 
			\[
				M(\F_g,\F_d) \gtrsim \left(\frac{\log n}{n}\right)^{\frac{\sigma_g+\sigma_d+D-D/p_g-D/p_d}{2\sigma_g+D-2D/p_g}}
			\]

		\subsection*{\bf Dense or higher smoothness case:}
		    In the dense case, we choose our set of densities by perturbing $g_0$ along every scaling of the wavelet simultaneously i.e. let
		        \[
		            \zeta = \{ \tau : \tau_\lambda \in \{-1,+1\}\}
		        \]
			Now, we need
			\[
				\norm{g_0+c_g\sum_{\lambda}\tau_\lambda\psi_{\lambda}}^{\sigma_g}_{p_g q_g}\leq 
				\norm{{g_0}}^{\sigma_g}_{p_g q_g}+ 2^{j(\sigma_g+D/2)}c_g \leq L_g
			\]
			so that $\Omega_g\subseteq \F_g$. The choice of  $c_g = c 2^{-j(\sigma_g+D/2)}$ suffices. 
			Similarly, $c_d = L_d 2^{-j(\sigma_d+D/2)}$ implies $\Omega_d\subseteq \F_d$.
			
			Now the Varshamov-Gilbert bound from above implies we can pick a subset of $\Omega_G$ with size at least $2^{|K_j|/8}$ such that $\omega(\tau_\lambda,\tau_{k'}) \geq |K_j|/8$ which gives,
			\begin{align*}
				d_{\F_d}(p_\lambda, p_{\lambda'}) 
					&= \sup_{f\in \Omega_d} \left|\int c_g(f(x)(\psi_{\lambda}-\psi_{\lambda'})dx\right| \\
					&= c_gc_d\omega(\tau_\lambda, \tau_{\lambda'}) \geq c_g c_d \frac{2^{D j}}{4}
			\end{align*}

			We pick $j$ large enough such that the KL divergence between any $p_\tau$ and $g_0$ is small. This enables us to apply Fano's lemma from above and get a lower bound. In particular we need, for any $p_\tau \in \Omega_g$, $D_{KL}(p_\tau^n, g_0^n)\leq  c n c_g^2 \norm{\tau}_{2} =  c n c_g^2 |K_j|$ to be at most $\frac{\log |\zeta|}{16} =\frac{|K_j|}{16}$ which is equivalent to $n \leq 2^{j(2\sigma_g+D)}$. Then by Fano's lemma the lower bound in the dense case is 
			\[
				n^{-\frac{\sigma_g+\sigma_d}{2\sigma_g+D}}
			\]
            We combine the above two cases to get the following lower bound on the rate 
            \[
                \gtrsim \max{(n^{-\frac{\sigma_g+\sigma_d}{2\sigma_g+D}},n^{-\frac{\sigma_g+\sigma_d+D-D/p_g-D/p_d}{2\sigma_g+D-2D/p_g}})}
            \]
	
	\end{proof}

\section{Proof of the Upper Bound}
    We use the wavelet thresholding estimate as introduced in \cite{donoho1996density} to get an upper bound on our minimax rate.
    
    \begin{proof} (of theorem \ref{thm:nonlinear_upper_bound})
        We first upper bound our error by three terms namely, the stochastic error, the bias and the non-linear terms. The stochastic error is bounded above as usual by the above moment bound. The bias is bounded above by virtue of our density belonging to the besov space $B^{\sigma_g}_{p_g,q_g}$. The non-linear terms are more delicate. We follow the procedure in \cite{donoho1996density} and split them into four groups the first two of which are shown to be negligible as the probability of large deviations falls exponentially rapidly from Bernstein's inequality above. We simplify the upper bounds on the other two terms considerably by paying a penalty on the rate by the factor that is logarithmic in the sample size. We now fill in the details of the proof. 
    
        We first let our discriminator and generator classes be 
        \begin{align*}
            \F_d &= \{f : \norm{f}^{\sigma_d}_{p_d, q_d}\leq L_d\}\\
            \F_g &= \{p : \norm{p}^{\sigma_g}_{p_g, q_g}\leq L_g\}\cap \P\\
            \P &= \{p:p\geq 0, \norm{p}_{\L^1}=1, \text{supp}(p)\subseteq [-T,T]\}
        \end{align*}

		Given $X_1, \dots, X_n$ be IID with density $ p\in \F_g$ and the thresholded wavelet estimator of $p$ i.e. 
		\begin{align*}
		    p
			&=   \sum_{ k\in \Z} \alpha^p_{k} \phi_{k}+
			    \sum_{j\geq 0} \sum_{\lambda\in \Lambda_j} \beta^p_{\lambda}\psi_{\lambda}\\
			\hat{p}_n 
			&=   \sum_{ k\in \Z} \hat{\alpha}_{k} \phi_{k}+
			    \sum_{j=0}^{j_0} \sum_{\lambda\in \Lambda_j} \hat{\beta}_{\lambda}\psi_{\lambda}+
			    \sum_{j= j_0}^{j_1}\sum_{\lambda\in \Lambda_j} \tilde{\beta}_{\lambda} \psi_{\lambda}
		\end{align*}
		where 
			\[
			    \begin{aligned}[c]
			        \alpha^p_{k} &=  \E_{X\sim p}[\phi_{k}(X)]\\
			        \beta^p_{\lambda} &= \E_{X\sim p}[\psi_{\lambda}(X)]
			    \end{aligned}\hspace{3ex}\qquad
			    \begin{aligned}[c]
			        \hat{\alpha}_{k} &= \frac{1}{n}\sum_{i=1}^n \phi_{k}(X_i)\\
			        \hat{\beta}_{\lambda} &= \frac{1}{n}\sum_{i=1}^n \psi_{\lambda}(X_i)
			    \end{aligned}
			    \begin{aligned}[r]
			        \tilde{\beta}_{\lambda} &= \hat{\beta}_{\lambda}\mathbf{1}_{\{\hat{\beta}_{\lambda}>t\}}\\
			    \end{aligned}
			 \]
		with $t = K\sqrt{j/n}$, where $K$ is a constant to be specified later, and 
			\begin{align*}
			    2^{j_0} &= n^{\frac{1}{2\sigma_g+D}}\\
			    2^{j_1} &= n^{\frac{1}{2\sigma_g+D-2D/p_g}}
			\end{align*}
		we can upper bound the error as, 
			\begin{align*}
				d_{\F_d}(p,\hat{p}_n) 
					\leq \hspace{5ex}
						&\sup_{f\in \F_d}\sum_{k\in \Z}
						\alpha^f_{k}
						\left(\alpha_{ k}^p- \hat{\alpha}_{k}\right) 
						&+&
						\sup_{f\in \F_d}
						\sum_{j= 0}^{j_0}\sum_{\lambda\in \Lambda_j} 
						\beta^f_{\lambda}
						\left( \beta_{\lambda}^p- \hat{\beta}_{\lambda}\right)\\
						+&
						\sup_{f\in \F_d}
						\sum_{j\geq j_0}^{j_1}\sum_{\lambda\in \Lambda_j} 
						\beta^f_{\lambda}
						\left( \beta_{\lambda}^p- \tilde{\beta}_{\lambda}\right)
						&+& 
						\sup_{f\in \F_d}
						\sum_{j\geq j_1}\sum_{\lambda\in \Lambda_j} 
						\beta^f_{\lambda}
						 \beta_{\lambda}^p
			\end{align*}
			where the first three terms constitute the stochastic error (the non-linear terms or thresholded terms are also called `detail' terms \citep{donoho1996density}) and the last term is the bias. In particular:
        
			\begin{enumerate}
				\item 
				    The first term in our upper bound of the risk is the stochastic error or the variance of a linear wavelet estimator with $j_0$ terms. Note that since $\sigma_g\geq D/p_g$ $p\in \F_g$ implies by lemma \ref{lemma:besov_ub} that $\norm{p}_\infty<\infty$.
				    Then by substitution 
                    \[
                        \E_p|\psi_\lambda(X)|^{p_d'}\leq 
                            2^{-Dj (p_d'/2-1)}
                    \]
				    Therefore by lemma \ref{lemma:variance} we have an upper bound here of 
				    \[
				        cn^{-1/2} (2^{j_0(D/2-\sigma_d)}+1)
							\lesssim n^{-\frac{\sigma_g+\sigma_d}{2\sigma_g+D}}  +n^{-1/2}
				    \]

				\item 
				    The third term is the bias of a linear wavelet estimator with $j_1$ terms which by lemma \ref{lemma:bias} for $p_d'\geq p_g$ is bounded above by 
				    \[
				        c 2^{-j_1(\sigma_d+\sigma_g-D/p_g+D/p_d')}
				        \lesssim n^{-\frac{\sigma_g+\sigma_d+D-D/p_g-D/p_d}{2\sigma_g +D-2D/p_g}}
				    \]

				\item For the second term we have, by lemmas \ref{lemma:coeff} and \ref{lemma:discriminator}
					\begin{align*}
						\E\sup_{f\in \F_d} 
						\sum_{j\geq j_0}\sum_{\lambda\in \Lambda} 
						\beta^f_{\lambda}
						\left( \beta^p_{\lambda}- \tilde{\beta}_{\lambda}\right)
						&\leq
								L_D 
								\sum_{j= j_0}^{j_1} 
									2^{-j(\sigma_d+D/2-D/p_d)}
								\left(
									\E 
									\sum_{\lambda \in\Lambda_j}
										|\beta^p_{\lambda}- \tilde{\beta}_{\lambda}|^{p_d'}\mathrm{1}_A
								\right)^{1/p_d'}\\
							&\leq
								L_D 
								\sum_{j= j_0}^{j_1} 
									2^{-j(\sigma_d+D/2-D/p_d)}
								\left(
									\sum_{\lambda \in\Lambda_j}
									\E 
										|\beta^p_{\lambda}- \tilde{\beta}_{\lambda}|^{p_d'}\mathrm{1}_A
								\right)^{1/p_d'}
					\end{align*}
					where we are only summing over finitely many terms. 
					The set $A$ is given by the following cases:
					\vspace{2ex}
					
                    (For the upper bounds of the first two cases we have chosen $\gamma$ (which in turn determines the value of $K$) to be large enough so that the exponent of $2^j$ is negative and thus we can upper bound the geometric series by a constant multiple of the first term.) 
					\begin{enumerate}
						\item Let $A$ be the set of $k$
							s.t. $\hat{\beta}_{\lambda}>t$ and $\beta^p_{\lambda}<t/2$ and $r\geq 1/p_d'$ then 
							\begin{align*}
									L_D 
									\sum_{j= j_0}^{j_1} 
										&2^{-j(\sigma_d+D/2-D/p_d)}
									\left(
										\sum_{\lambda \in\Lambda_j}
										\E 
											|\beta^p_{\lambda}- \tilde{\beta}_{\lambda}|^{p_d'}\mathrm{1}_A
									\right)^{1/p_d'}\\
									&\leq 
										L_D 
										\sum_{j= j_0}^{j_1} 
										2^{-j(\sigma_d+D/2-D/p_d)}
										\left(
											\sum_{\lambda \in\Lambda_j}
											(\E 
												|\beta^p_{\lambda}- \tilde{\beta}_{\lambda}|^{p_d'r})^{1/r}\Pr(A)^{1/r'}
										\right)^{1/p_d'}\\
					        \end{align*}
					        Using the large deviation and moment bound
					        \[
					            \Pr(A)\leq 
					                \Pr\left(
					                    |\hat{\beta}_{\lambda}-\beta^p_{\lambda}|\geq t/2
					                \right) \leq c 2^{-\gamma j}
					        \]
					        we get,
					        \begin{align*}
									&\leq 
										c 
										\sum_{j= j_0}^{j_1} 2^{-j(\sigma_d+D/2-D/p_d)}
										\left( 
											2^{D j} 
												n^{-p_d'/2}
												2^{-j\gamma/r'}
										\right)^{1/p_d'}\\
									&\leq 
										c 
										\sum_{j= j_0}^{j_1} 
											2^{-j(\sigma_d+D/2-D/p_d-D/p_d')}
												n^{-1/2}
												2^{-\gamma j/p_d'r'}\\
									&\leq 
										c n^{-1/2}
										\sum_{j= j_0}^{j_1} 2^{-j(\sigma_d-D/2+\gamma /p_d'r')}\\
									&\leq 
										c n^{-1/2}
										2^{-j_0(\sigma_d-D/2+\gamma /p_d'r')}\\
									&\lesssim
									    n^{-\frac{\sigma_g+\sigma_d+\gamma/p_d'r'}{2\sigma_g+D}},
								\end{align*}
							which is negligible compared to the linear term.
						\item Let $B$ be the set of $k$
							s.t. $\hat{\beta}_{\lambda}<t$ and $\beta^p_{\lambda}>2t$ then same as above
							\begin{align*}
								\E\sup_{f\in \F_d} 
								\sum_{j= j_0}^{j_1}\sum_{\lambda \in\Lambda_j}  \beta^f_{\lambda}
								\beta^p_{\lambda}\mathrm{1}_B
								&\leq
									L_D 
									\sum_{j=j_0}^{j_1} 2^{-j(\sigma_d+D/2-D/p_d)}
									\norm{\beta_\lambda^p}_{p_d'}
									(\text{Pr}(B))^{1/p_d'}\\
								&\leq
									L_D 
									\sum_{j=j_0}^{j_1} 2^{-j(\sigma_d+D/2-D/p_d)}
									\norm{\beta_\lambda^p}_{p_d'}
									2^{-\gamma j/p_d'}\\
								&\leq
									L_D 
									\sum_{j=j_0}^{j_1} 
									2^{-j(\sigma_d+\sigma_g'+\gamma/p_d')}
									\sup_{j_0\leq j\leq j_1} 2^{j(\sigma_g'+D/2-D/p_d')}\norm{\beta_\lambda}_{p_d'}\\
								&\leq L_DL_G \sum_{j=j_0}^{j_1} 
									2^{- j(\sigma_d+\sigma_g'+\gamma/p_d')}\\
								&\leq L_DL_G 
									C2^{- j_0(\sigma_d+\sigma_g'+\gamma/p_d')}\\
								&\lesssim
								    n^{-\frac{\sigma_d+\sigma_g'+\gamma}{2\sigma_g+D}}
							\end{align*}
						which is negligible compared to the bias term. 

						\item Let $C$ be the set of $k$
							s.t. $\hat{\beta}_{\lambda}>t$ and $\beta^p_{\lambda}>t/2$ then:

							\begin{align*}
								\E\sup_{f\in \F_d} 
								\sum_{j= j_0}^{j_1}&\sum_{\lambda \in\Lambda_j} 
								\beta^f_{\lambda}
								\left( \beta^p_{\lambda}- \tilde{\beta}_{\lambda}\right)\mathrm{1}_C\\
								&\leq 
									L_D 
									\sum_{j= j_0}^{j_1} 
										2^{-j(\sigma_d+D/2-D/p_d)}
										\left( 
											\sum_{k\in C}
												\E
												|\beta^p_{\lambda}- \tilde{\beta}_{\lambda}|^{p_d'}
										\right)^{1/p_d'}\\
								&\leq 
									L_D 
									\sum_{j= j_0}^{j_1} C n^{-1/2}
									2^{-j(\sigma_d+D/2-D/p_d)}
									\left( 
										 \sum_{k\in C}
											\left(
											\frac{2\beta^p_{\lambda}\sqrt{n/j}}{K}
											\right)^{p_g}
									\right)^{1/p_d'}\\
								&\leq 
									L_D 
									\sum_{j= j_0}^{j_1} C n^{-1/2}(\sqrt{n/j})^{p_g/p_d'}2^{-j(\sigma_d+D/2-D/p_d)}
									\norm{\beta^p}_{p_g}^{p_g/p_d'}\\
								&\leq 
									L_D 
									\sum_{j= j_0}^{j_1} C n^{-1/2}(\sqrt{n/j})^{p_g/p_d'} 
									2^{-j(\sigma_d+D/2-D/p_d)}
									2^{-j(\sigma_g +D/2-D/p_g)p_g/p_d'}\\
									&\sup_{j_0\leq j\leq j_1}
									\norm{\beta}_{p_g}2^{j(\sigma_g +D/2-D/p_g)}\\
								&\leq 
									CL_D L_G n^{1/2(p_g/p_d'-1)}
									\sum_{j= j_0}^{j_1}  
									2^{-j((\sigma_g +D/2)p_g/p_d'+\sigma_d-D/2)}j^{-p_g/2p_d'}\\
								&\leq 
									CL_D L_G n^{1/2(p_g/p_d'-1)} 
									2^{-j_m((\sigma_g +D/2)p_g/p_d'+\sigma_d-D/2)}
							\end{align*}
							where 
							\[
								j_m = 
									\begin{cases}
										j_0 & (2\sigma_g+D)p_g \geq (D-2\sigma_d)p_d'\\
										j_1 & (2\sigma_g+D)p_g \leq (D-2\sigma_d)p_d'
									\end{cases}
							\]
							In the first case we have an upper bound of 
							\begin{align*}
								\lesssim  n^{-\frac{\sigma_g+\sigma_d}{2\sigma_g+D}}
							\end{align*}
							and in the second case we have an upper bound of 
							\[
								\lesssim
									n^{-\frac{\sigma_g+\sigma_d+D-D/p_d-D/p_g}{2\sigma_g +D-2D/p_g}} 
							\]

						\item Let $E$ be the set of $k$
							s.t. $\hat{\beta}_{\lambda}<t$ and $\beta^p_{\lambda}<2t$ then:
							\begin{align*}
								\E\sup_{f\in \F_d} 
								\sum_{j= j_0}^{j_1}&\sum_{\lambda \in\Lambda_j} 
								\beta^f_{\lambda}
								\beta^p_{\lambda}\mathrm{1}_D\\
								&\leq
									L_D 
									\sum_{j= j_0}^{j_1} 
									2^{-j(\sigma_d+D/2-D/p_d)}
									\left(
									\sum_{\lambda\in \Lambda_j} |\beta_{\lambda}^p|^{p_d'}\right)^{ 1/p_d'}\\
								&\leq
									L_D 
									\sum_{j= j_0}^{j_1} 
									2^{-j(\sigma_d+D/2-D/p_d)}
									\left(
									\sum_{\lambda\in \Lambda_j} |\beta_{\lambda}^p|^{p_g}(2t)^{p_d'-p_g}\right)^{ 1/p_d'} & p_d'\geq p_g\\
								&=
									L_D 
									\sum_{j= j_0}^{j_1} 
									2^{-j(\sigma_d+D/2-D/p_d)}(2t)^{1-p_g/p_d'}
									\norm{\beta}^{ p_g/p_d'} \\
								&\leq
									L_D 
									\sum_{j= j_0}^{j_1} 
									2^{-j(\sigma_d+D/2-D/p_d)}(2\sqrt{j/n})^{1-p_g/p_d'} 2^{-j(\sigma_g+D/2-D/p_g)p_g/p_d'}
									L_g \\
								&\leq
									c \sqrt{j_1} 
									n^{1/2(p_g/p_d'-1)}
									\sum_{j=j_0}^{j_1}2^{-j((\sigma_g +D/2)p_g/p_d'+\sigma_d-D/2)}j^{-p_g/2p_d'} \\
								&\lesssim \left(n^{-\frac{\sigma_g+\sigma_d}{2\sigma_g+1}}+n^{-\frac{\sigma_g+\sigma_d+D-D/p_d-D/p_g}{2\sigma_g +D-2D/p_g}}\right)\sqrt{\log n}
							\end{align*}
					\end{enumerate}
				\end{enumerate}
	\end{proof}
    
\section{Proof of Theorem \ref{thm:linear_minimax_rate}}
    \subsection*{Lower Bound}
        \begin{proof}
            Just as in the proof of the lower bound above we let $j\geq 0$ and 
                \[
					\Omega_g := \{g_0\pm c_g\psi_{\lambda}:\lambda = 2^{-j}k+2^{-j-1}\epsilon_1, k\in K_j\} 
				\]
			where $\epsilon_1=(1,0,\dots,0)$. Here we let $g_0 = 2^{Dj}c$ on at least $[-A,A]^D$ and 
			    \[
			        c_g = \min\left(
			        \frac{c}{2\norm{\psi}_{\infty}}2^{-D j/2}, \frac{L_g}{2}2^{-j(\sigma_g+D/2-D/p_g)}\right)
			    \]
			such that $\Omega_g\subseteq \F_g$.
			We also let 
			    \[
					\Omega_d := \{c_d\sum_{\lambda}\tau_\lambda\psi_{\lambda}:\lambda = 2^{-j}k+2^{-j-1}\epsilon_1, k\in K_j,\norm{\tau}\leq L_d \}
				\]
			s.t. 
				\begin{align*}
					c_d \leq L_d 2^{-j(\sigma_d+D/2-1/p_d)}
				\end{align*}
			i.e. $\Omega_d\subseteq \F_d$. 
			
			Then for any linear estimate $\hat{P}$ with  $\hat{\alpha}_\lambda = \int \psi_\lambda(x)d\hat{P}(x)$, 
			\begin{align*}
			    \sup_{P\in \F_g} &\E_P\sup_{f\in \F_d} \left|\int f(x)(dP(x)-d\hat{P}(x))\right|\\
			        &\geq \sup_{p\in \Omega_g} \E_P
			            \sup_{f\in \Omega_d}   \left|\int f(x)(p(x)dx-d\hat{P}(x))\right|\\
			        &= \sup_{\lambda:k\in K_j} \frac{c_d}{2}
			     \E_{g_0+c_g\psi_\lambda}
			        \left(\sup_{\tau:\norm{\tau}_{p_d}\leq L_d}
			            \sum_{\lambda'\neq \lambda} |\tau_{\lambda'}\hat{\alpha}_{\lambda'}|+ |\tau_{\lambda}||c_g-\hat{\alpha}_\lambda|
			            \right)\\
			            &+\E_{g_0-c_g\psi_\lambda}\left(\sup_{\tau:\norm{\tau}_{p_d}\leq L_d}
			            \sum_{\lambda'\neq \lambda} |\tau_{\lambda'}\hat{\alpha}_{\lambda'}|+ |\tau_{\lambda}||c_g-\hat{\alpha}_\lambda|
			            \right)\\
			        &\geq 
			            \sup_{\lambda:k\in K_j}\frac{c_d}{2}
			            \\&\sup_{\tau:\norm{\tau}_{p_d}\leq L_d}
			            \left(
    			            \sum_{\lambda'\neq \lambda} \E_{g_0+c_g\psi_\lambda}|\tau_{\lambda'}||\hat{\alpha}_{\lambda'}| +\E_{g_0-c_g\psi_\lambda}|\tau_{\lambda'}||\hat{\alpha}_{\lambda'}|+  \E_{g_0+c_g\psi_\lambda}|\tau_{\lambda}||c_g-\hat{\alpha}_\lambda|
    			            + \E_{g_0-c_g\psi_\lambda}|\tau_{\lambda}||c_g-\hat{\alpha}_\lambda|
			            \right)\\
			        &= 
			            \sup_{\lambda:k\in K_j}\frac{c_d}{2}
			            \\&
			            \left(
    			            \sum_{\lambda'\neq \lambda} (\E_{g_0+c_g\psi_\lambda}|\hat{\alpha}_{\lambda'}|)^{p_d'}+
    			            (\E_{g_0-c_g\psi_\lambda}|\hat{\alpha}_{\lambda'}|)^{p_d'}+
    			            (\E_{g_0+c_g\psi_\lambda}|c_g-\hat{\alpha}_\lambda|)^{p_d'}+
    			            (\E_{g_0-c_g\psi_\lambda}|c_g-\hat{\alpha}_\lambda|)^{p_d'}
			            \right)^{1/p_d'}\\
			          &\geq c_d
			            \left( \frac{1}{2^{Dj}}
    			            \sum_{\lambda'\neq \lambda} (\E_{g_0+c_g\psi_\lambda}|\hat{\alpha}_{\lambda'}|)^{p_d'}+
    			            (\E_{g_0-c_g\psi_\lambda}|\hat{\alpha}_{\lambda'}|)^{p_d'}+
    			            (\E_{g_0+c_g\psi_\lambda}|c_g-\hat{\alpha}_\lambda|)^{p_d'}+
    			            (\E_{g_0-c_g\psi_\lambda}|c_g-\hat{\alpha}_\lambda|)^{p_d'}
			            \right)^{1/p_d'}
			\end{align*}
			Now the expression inside the brackets is bounded below in \cite{donoho1996density} appendix A.3 by $n^{-1/2}2^{jD/p_d'}$ where $2^{j} = n^{\frac{1}{2\sigma_g-2D/p_g+2D/p_d'+D}}$ which implies a lower bound in our case of 
			\begin{align*}
			    &c2^{-j(\sigma_d+D/2-D/p_d)}n^{-1/2}2^{Dj/p_d'}\\
			    &= c2^{j(D/2-\sigma_d)}n^{-1/2}
			\end{align*}

		  which gives us a lower bound of 
		  \[
			    \gtrsim n^{-\frac{\sigma_d+\sigma_g-D/p_g+D/p_d'}{2\sigma_g-2D/p_g+2D/p_d'+D}}
			\]
			as desired.
                
        \end{proof}

\section{Proof of Theorem~\ref{thm:GAN_upper_bound}}
\label{app:GAN_upper_bound_proof}

Here, we prove the following theorem, which upper bounds the risk of an appropriately constructed GAN for learning Besov distributions:

\begin{theorem}[Convergence Rate of a Well-Optimized GAN]
Fix a Besov density class $B_{p_g,q_g}^{\sigma_g}$
    with $\sigma_g > D/p_g$ and discriminator class $B_{p_d,q_d}^{\sigma_d}$ with $\sigma_d>D/p_d$. Then, for any desired approximation error $\epsilon > 0$, one can construct a GAN $\hat p$ of the form~\eqref{eq:GAN_estimate} (with $\tilde{p}_n$)  with discriminator network $N_d \in \Phi(H_d,W_d,S_d,B_d)$ and generator network $N_g \in \Phi(H_g,W_g,S_g,B_g)$, s.t. for all $p \in B_{p_g,q_g}^{\sigma_g}$
    \begin{align*}
        \E \left[ d_{B_{p_d,q_d}^{\sigma_d}} \left( \hat{p}, p \right) \right] \lesssim
        \epsilon + 
        \E
        d_{B^{\sigma_d}_{p_d,q_d}}(\tilde{p}_n,p)
    \end{align*}
   where $H_d$, $H_g$ grow logarithmically with $1/\epsilon$, $W_d,S_d,B_d,W_g,S_g$, $B_g$ grow polynomially with $1/\epsilon$ and $C > 0$ is a constant that depends only on $B_{p_d,q_d}^{\sigma_d}$ and $B_{p_g,q_g}^{\sigma_g}$.
\label{thm:GAN_upper_bound_appendix}
\end{theorem}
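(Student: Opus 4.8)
The plan is to combine the ReLU-network approximation of Besov functions (Lemma 17, from \citep{suzuki2018adaptivity}) with a standard oracle-type decomposition for estimators that minimize an empirical IPM. Write $\F_d := B_{p_d,q_d}^{\sigma_d}(L_d)$ and $\F_g := B_{p_g,q_g}^{\sigma_g}(L_g)\cap\P$ for the true Besov classes, and let $\tilde\F_d\subseteq\Phi(H_d,W_d,S_d,B_d)$ and $\tilde\F_g\subseteq\Phi(H_g,W_g,S_g,B_g)$ denote the network classes to be chosen, so that $\hat p=\argmin_{Q\in\tilde\F_g}d_{\tilde\F_d}(Q,\tilde p_n)$. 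First I would note that, since all these classes are symmetric (closed under $f\mapsto-f$), the minimax objective in \eqref{eq:GAN_estimate} equals $d_{\tilde\F_d}(\cdot,\tilde p_n)$, so this $\argmin$ characterization of the GAN is valid (and one may take an approximate minimizer if no exact one exists).

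Next I would invoke Lemma 17 to fix, for the prescribed $\epsilon>0$, network classes with two properties at once. \emph{(i) Approximation:} every $f\in\F_d$ lies within $\|\cdot\|_\infty$-distance $\epsilon$ of some $\tilde f\in\tilde\F_d$, and every density $p\in\F_g$ lies within $\|\cdot\|_\infty$-distance $\lesssim\epsilon$ on the common compact support (hence within $d_{\F_d}$-distance $\lesssim\epsilon$, since $\F_d$ is uniformly bounded by Lemma~\ref{lemma:besov_ub}, using $\sigma_d>D/p_d$) of some $\tilde g\in\tilde\F_g$, after an $O(\epsilon)$ correction restoring nonnegativity and unit mass. \emph{(ii) Containment:} the chosen $\tilde\F_d$ sits inside an enlarged ball $B_{p_d,q_d}^{\sigma_d}(CL_d)$, so that $d_{\tilde\F_d}(\mu,\nu)\le C\,d_{\F_d}(\mu,\nu)$ by homogeneity, and every $Q\in\tilde\F_g$, as well as $\tilde p_n$, has total-variation mass bounded by a constant in expectation — for $\tilde p_n$ this uses the partition-of-unity identity $\sum_k\phi(x-k)\equiv1$ to get $\int\tilde p_n=1$, together with $\E\|\tilde p_n\|_{1}<\infty$, which follows from $\tilde p_n$ being close to the bounded density $p$ (Lemma~\ref{lemma:besov_ub}, $\sigma_g>D/p_g$). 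Lemma 17 yields precisely the stated growth: $H_d,H_g$ logarithmic in $1/\epsilon$ and $W_d,S_d,B_d,W_g,S_g,B_g$ polynomial in $1/\epsilon$.

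Then I would run the decomposition. By the triangle inequality,
\[
 d_{\F_d}(\hat p,p)\le d_{\F_d}(\hat p,\tilde p_n)+d_{\F_d}(\tilde p_n,p),
\]
and the second term is $d_{B_{p_d,q_d}^{\sigma_d}}(\tilde p_n,p)$, whose expectation appears in the statement. For the first term, approximating each $f\in\F_d$ by $\tilde f\in\tilde\F_d$ gives, via $|\langle f,\hat p-\tilde p_n\rangle|\le|\langle\tilde f,\hat p-\tilde p_n\rangle|+\|f-\tilde f\|_\infty(\|\hat p\|_{TV}+\|\tilde p_n\|_{TV})$ and property (ii), the bound $d_{\F_d}(\hat p,\tilde p_n)\le d_{\tilde\F_d}(\hat p,\tilde p_n)+C\epsilon$. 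Since $\hat p$ minimizes $d_{\tilde\F_d}(\cdot,\tilde p_n)$ over $\tilde\F_g\ni\tilde g$, we obtain $d_{\tilde\F_d}(\hat p,\tilde p_n)\le d_{\tilde\F_d}(\tilde g,\tilde p_n)\le C\,d_{\F_d}(\tilde g,\tilde p_n)\le C\big(d_{\F_d}(\tilde g,p)+d_{\F_d}(p,\tilde p_n)\big)\lesssim\epsilon+d_{\F_d}(\tilde p_n,p)$, using (i) for $d_{\F_d}(\tilde g,p)\lesssim\epsilon$. Chaining these and taking $\E$ gives $\E[d_{\F_d}(\hat p,p)]\lesssim\epsilon+\E\,d_{B_{p_d,q_d}^{\sigma_d}}(\tilde p_n,p)$, which is the theorem; the corollary then follows by substituting the wavelet-thresholding rate of Theorem~\ref{thm:nonlinear_upper_bound}.

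The step I expect to be the main obstacle is establishing property (ii) in tandem with the mass bounds: Lemma 17 of \citep{suzuki2018adaptivity} is phrased purely as an approximation statement, so one must check (or slightly sharpen it) to guarantee that the approximating networks lie in a controlled Besov ball, that the generator networks can be taken to be genuine probability densities at only $O(\epsilon)$ extra cost, and that $\E\|\tilde p_n\|_{TV}$ (equivalently $\|\hat p-\tilde p_n\|_{TV}$) is $O(1)$. The remaining ingredients — the symmetry argument validating the $\argmin$ form of the GAN, and the triangle-inequality bookkeeping — are routine.
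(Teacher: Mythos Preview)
Your approach is essentially the same as the paper's: both combine Suzuki's ReLU approximation result (Lemma~17) with an oracle decomposition into generator approximation error, discriminator approximation error, and statistical error, then use $\sigma_d>D/p_d$ to embed $\F_d\hookrightarrow L^\infty$ and the containment $\tilde\F_d\subseteq\F_d$ to handle the last term. The paper simply quotes the decomposition from \citet[Inequality~2.2]{liang2017well} rather than deriving it via your triangle-inequality chain, and its statement of Lemma~17 already asserts $\Phi(H,W,S,B)\subseteq B_{p,q}^\sigma(1)$, which dispatches your ``main obstacle'' (ii) outright; the paper also does not worry about normalizing the generator outputs to genuine densities or about $\|\tilde p_n\|_{TV}$, so you are being more careful there than the paper is.
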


Our statistical guarantees rely on a recent construction, by \citet{suzuki2018adaptivity}, of a fully-connected ReLU network that approximates Besov functions. Specifically, we leverage the following result:

\begin{lemma}[Proposition 1 of \citet{suzuki2018adaptivity}]
Suppose that $p,q,r \in (0,\infty]$ and $\sigma > \delta:= D(1/p - 1/r)_+$ and let $\nu = (\sigma - \delta)/(2\delta)$. Then, for sufficiently small $\epsilon \in (0,1)$, there exists a constant $C > 0$, depending only on $D,p,q,r,\sigma$, such that, for some

\[
    H \leq C \log (1/\epsilon), \quad 
    W \leq C\epsilon^{-D/\sigma}, \quad 
    S \leq C\epsilon^{-D/\sigma} \log (1/\epsilon),  \quad
    B \leq C\epsilon^{-(D/\nu+1)(1 \vee (D/p - \sigma)_+)/\sigma},
\]

$\Phi(H,W,S,B) \subseteq B_{p,q}^\sigma(1)$ and
$\Phi(H,W,S,B)$ approximates $B_{p,q}^\sigma(1)$ to accuracy $\epsilon$ in $\L^r$; i.e.,
\vspace{-2mm}
\[
    \sup_{f \in B_{p,q}^\sigma(1)} \inf_{f \in \Phi(H,W,S,B)} \|f - \tilde f\|_{\L^r} \leq C \epsilon.
\]
\label{lemma:suzuki}
\end{lemma}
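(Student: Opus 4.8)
The plan is to follow the now-standard pipeline for approximating functions in a Besov space by deep ReLU networks, combining (i) nonlinear/adaptive multiscale spline approximation of Besov functions with (ii) the ability of ReLU networks to emulate piecewise-polynomial building blocks cheaply. Throughout, fix $f \in B^\sigma_{p,q}(1)$.

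\textbf{Step 1 (adaptive spline approximation).} First I would invoke the classical characterization of $B^\sigma_{p,q}$ via cardinal B-spline (equivalently, wavelet) expansions $f = \sum_{j \geq 0}\sum_k \beta_{j,k} M_{j,k}$, where $M_{j,k}(x) = M(2^j x - k)$ is a tensor-product B-spline of order exceeding $\sigma$ and the Besov norm is equivalent to a weighted mixed $\ell^p$--$\ell^q$ norm of the coefficient array $\{\beta_{j,k}\}$. Since $\sigma > \delta = D(1/p - 1/r)_+$, the Besov embedding underlying the theory of nonlinear approximation (DeVore--Jawerth--Popov) yields a best $N$-term approximant $f_N = \sum_{(j,k)\in\Gamma}\beta_{j,k}M_{j,k}$, $|\Gamma| = N$, with $\|f - f_N\|_{\L^r} \lesssim N^{-\sigma/D}$; choosing $N \asymp \epsilon^{-D/\sigma}$ makes this error $\lesssim \epsilon$. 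Along the way I record two quantitative facts for later: the finest scale occurring in $\Gamma$ satisfies $2^{j_{\max} D} \lesssim \mathrm{poly}(N)$, so $j_{\max} \lesssim \log(1/\epsilon)$, and the retained coefficients obey $|\beta_{j,k}| \lesssim 2^{j(D/p - \sigma)_+}$ (times a constant), which is exactly where the exponent in the weight bound $B$ originates.

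\textbf{Step 2 (ReLU emulation of the building blocks).} Next I would replace $f_N$ by a network. The key gadget is the $O(\log(1/\delta))$-depth, $O(\log(1/\delta))$-size ReLU network computing $x\mapsto x^2$, and hence products $(x,y)\mapsto xy$, to accuracy $\delta$ on a bounded domain (Yarotsky/Telgarsky sawtooth construction). With it I can (a) approximate a univariate order-$m$ B-spline, a fixed piecewise polynomial, to accuracy $\delta$ by a network of depth $O(\log(1/\delta))$; (b) form the $D$-fold product to obtain a tensor-product atom $M_{j,k}$ to accuracy $O(\delta)$ with depth still $O(\log(1/\delta))$, the dilation/translation $2^j x - k$ being an affine map folded into the first layer --- the factors $2^j$ and shifts $|k|\lesssim 2^j$ are precisely the quantities forcing the maximum weight to be as large as claimed. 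Stacking the $N$ such sub-networks in parallel and taking the $\beta_{j,k}$-weighted sum in one extra layer produces a network in some $\Phi(H,W,S,B)$ with $H \asymp \log(1/\epsilon)$, $W \asymp N \asymp \epsilon^{-D/\sigma}$, $S \asymp N\log(1/\epsilon)$, and $B$ governed jointly by $2^{j_{\max}}$, the coefficient bound, and the accuracy $\delta$. The total deviation from $f$ is at most $\|f - f_N\|_{\L^r} + \sum_{(j,k)\in\Gamma}|\beta_{j,k}|\cdot O(\delta) \lesssim \epsilon + N\cdot\mathrm{poly}(1/\epsilon)\cdot\delta$, so picking $\delta$ polynomially small in $\epsilon$ (costing only an extra $O(\log(1/\epsilon))$ in depth and size) makes it $\lesssim \epsilon$; one then matches the exponents to read off the stated bounds on $H,W,S,B$ in terms of $\sigma,\nu,D,p$.

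\textbf{Step 3 (the reverse inclusion).} Finally I would check $\Phi(H,W,S,B) \subseteq B^\sigma_{p,q}(1)$ (up to the constant already present in the statement). A network of this size is piecewise linear with a number of linear regions polynomial in $W$ and $H$ and has global Lipschitz constant controlled by $B$ and $H$; estimating the wavelet/B-spline coefficients of such a piecewise-linear, compactly supported function directly bounds its Besov norm, after which a uniform rescaling of the output weights normalizes the radius to $1$. I expect this step --- reconciling a piecewise-linear function class with a ball in $B^\sigma_{p,q}$ when $\sigma$ may exceed $1$, and tracking how the role of $\nu$ enters through $B$ --- to be the subtlest point, together with the bookkeeping in Step 2 that must cleanly separate the logarithmic factors (from the multiplication gadgets and the range of scales) from the polynomial factors (from the $N$-term count). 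Everything else is routine once these two accounting issues are handled.
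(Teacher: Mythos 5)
The paper does not prove this lemma; it cites it directly from \citet{suzuki2018adaptivity} and uses it as a black box. Your Steps 1--2 faithfully recapitulate Suzuki's actual argument: characterize $B^\sigma_{p,q}$ via cardinal B-spline (equivalently wavelet) coefficients, invoke DeVore--Jawerth--Popov nonlinear $N$-term approximation to get $\|f - f_N\|_{\L^r} \lesssim N^{-\sigma/D}$ with $N \asymp \epsilon^{-D/\sigma}$, and emulate each piecewise-polynomial atom by a depth-$O(\log(1/\epsilon))$ Yarotsky-style ReLU sub-network, with the maximum-weight bound driven by $2^{j_{\max}}$ and the coefficient magnitudes. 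That half of the lemma is correct and your route matches the source paper.

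Your Step 3, which you flagged as the subtlest point, is where the proposal cannot be completed, and the problem is more severe than a bookkeeping subtlety: the asserted inclusion $\Phi(H,W,S,B) \subseteq B^\sigma_{p,q}(1)$ is false for large $\sigma$. A fully-connected ReLU network computes a continuous piecewise-linear function; in dimension one, any such nontrivial function (e.g., a hat function) has $O(1)$ straddling wavelet coefficients of size $\asymp 2^{-3j/2}$ at each scale $j$, so its Besov seminorm behaves like $\bigl\| 2^{j(\sigma + 1/2 - 1/p)} \cdot 2^{-3j/2} \bigr\|_{l^q_j} = \bigl\| 2^{j(\sigma - 1 - 1/p)} \bigr\|_{l^q_j}$, which diverges whenever $\sigma > 1 + 1/p$. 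Rescaling the output weights, as you propose, rescales the ball radius but cannot make an infinite Besov norm finite, so no choice of normalization rescues the inclusion. Since the lemma's hypotheses allow arbitrary $\sigma > D(1/p - 1/r)_+$, the inclusion claim cannot hold in general; it does not appear in Suzuki's Proposition 1 as I recall it (which is purely an approximation bound), and appears to have been added in this paper's restatement, where it is then used in the proof of Theorem~\ref{thm:GAN_upper_bound} to conclude $d_{\Phi(H_d,W_d,S_d,B_d)}(p, \tilde p_n) \leq d_{\F_d}(p, \tilde p_n)$. Your instinct that reconciling piecewise-linear functions with a ball in $B^\sigma_{p,q}$ for $\sigma$ exceeding roughly $1$ is problematic was exactly right: it is not a gap in your reconstruction but an error in the cited statement itself.
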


\begin{proof}
\citet[Inequality 2.2]{liang2017well} showed that we can decompose the error, for densities $\hat{p}$, $p$,
\begin{align*}
    d_{\F_d} \left( \hat p, p \right)
    & \leq \inf_{q \in \Phi(H_g,W_g,S_g,B_g)} d_{\F_d} \left( p, q \right) \\
    & + 2 \sup_{f \in \F_d} \inf_{g \in \Phi(H_d,W_d,S_d,B_d)} \|f - g\|_\infty \\
    & + d_{\Phi(H_d,W_d,S_d,B_d)} \left( p, \tilde{p}_n \right) + d_{\F_d} \left( p, \tilde{p}_n \right),
\end{align*}
where the $3$ summands above correspond respectively the error of approximating $\F_g$ by $\Phi(L_g,W_g,S_g,B_g)$ (generator approximation error), the error of approximating $\F_d$ by $\Phi(L_d,W_d,S_d,B_d)$ (discriminator approximation error), and statistical error.

To bound the first term, note also that, since we assumed $\sigma_d > D/p_d$, we have the embedding $B_{p_d,q_d}^{\sigma_d} \subseteq \L^\infty$, and, in particular, $M := \sup_{f \in B_{p_d,q_d}^{\sigma_d}} \|f\|_{\L^\infty} < \infty$. Thus, by H\"older's inequality, the assumption that densities in $\P$ are supported only on $[-T,T]$, and Lemma~\ref{lemma:suzuki} (with $r = \infty$),
\begin{align*}
    \inf_{q \in \F_g} d_{\F_d} \left( p, q \right)
    \leq \inf_{q \in \F_g} \left( p, q \right) \sup_{f \in \F_D} \|f\|_{\L^1([-T,T])} \|p - q\|_{\L^\infty}
    \leq 2MT \epsilon.
\end{align*}
To bound the second term, simply observe that, by Lemma~\ref{lemma:suzuki}  (with $r = \infty$),
\[\sup_{f \in \F_d} \inf_{g \in \phi(L_g,W_g,S_g,B_g)} \|f - g\|_\infty \leq \epsilon.\]

Since, by Lemma~\ref{lemma:suzuki}, $\Phi(L_d,W_d,S_d,B_d) \subseteq B_{p_d,q_d}^{\sigma_d}$, the last term is immediately bounded (in expectation) by $d_{\F_d}(\tilde{p}_n, p)$.
Combining the bounds on these three terms gives
\[  
    d_{\F_d} \left( \hat{p}, p \right) \leq 2(MT + 1)\epsilon + 2
    d_{\F_d}(\tilde{p}_n, p).
\]
\end{proof}

{\small
  \bibliographystyle{plainnat}
  \bibliography{ref}
}

\end{document}